\documentclass[11pt,final]{article}
\usepackage{amsthm,amsfonts,graphicx,tikz,enumerate,psfrag,amssymb,amsmath}

\usepackage{fullpage}

\usepackage[utf8]{inputenc} 
\usepackage{bm}

\usepackage[colorlinks = true,
            linkcolor = blue,
            urlcolor  = blue,
            citecolor = blue,
            anchorcolor = blue]{hyperref}

\newtheorem{lemma}{Lemma}

\newtheorem{proposition}[lemma]{Proposition}
\newtheorem{definition}[lemma]{Definition}
\newtheorem{theorem}[lemma]{Theorem}

\newtheorem*{example}{Example}
\newtheorem{corollary}[lemma]{Corollary}
\newtheorem*{remark}{Remark}

\newcommand{\EE}{{\mathbf{E}}}

\newcommand{\tr}{{\rm tr}}

\newcommand{\dE}{\mathbf {E}}
\newcommand{\dP}{\mathbb{P}}

\newcommand{\dR}{\mathbb {R}}
\newcommand{\dC}{\mathbb {C}}

\newcommand{\cP}{\mathcal {P}}

\newcommand{\cI}{\mathcal{I}}

\newcommand{\si}{\sigma}

\newcommand{\SPAN}{ \mathrm{span}}

 %
 %
\newcommand{\NRM}[1]{{{\left\| #1\right\|}}} 
\newcommand{\PAR}[1]{{{\left(#1\right)}}} 
\newcommand{\BRA}[1]{{{\left\{#1\right\}}}} 

\newcommand{\INT}[1]{{{\left[\hspace{-1.4pt}\left[#1\right]\hspace{-1.4pt}\right]}}} 

\newcommand{\1}{1\!\!{\sf I}}\newcommand{\IND}{\1}
\newcommand{\veps}{\varepsilon}

\renewcommand{\Im}{\mathrm{Im}}
\newcommand{\AND}{\quad \mathrm{ and } \quad}

\newcommand{\hull}{\mathrm{hull}}

\newcommand{\WgU}{\mathrm{Wg}}
\newcommand{\Wg}{\mathrm{Wg}}

\newcommand{\cE}{{\mathcal{E}}}

\newcommand{\Fd}{{\mathbb{F}_d}}
\newcommand{\Un}{\mathbb{U}_n}
\newcommand{\Unq}{\mathbb{U}_{n^q}}
\newcommand{\On}{\mathbb{O}_n}

\title{Strong asymptotic freeness for 
independent uniform variables on compact groups associated to nontrivial representations}
\author{Charles Bordenave and Beno\^\i{}t Collins}

\begin{document}
\maketitle

\begin{abstract}
Voiculescu discovered asymptotic freeness of independent Haar-distributed unitary matrices. Many refinements have been obtained, including strong asymptotic freeness
of random unitaries and strong asymptotic freeness of random permutations
acting on the orthogonal of the Perron-Frobenius eigenvector.
In this paper, we consider a new matrix unitary model appearing naturally from 
representation theory of compact groups.
We fix a nontrivial signature $\rho$, i.e., two finite sequences of non-increasing natural numbers, and
for $n$ large enough, consider the irreducible representation
$V_{n,\rho}$ of $\Un$ associated with the signature $\rho$.
We consider the quotient $\mathbb{U}_{n,\rho}$ of $\Un$ viewed as a matrix subgroup of $\mathbb{U}(V_{n,\rho})$, 
and show that 
strong asymptotic freeness holds in this generalized context when drawing independent copies of the Haar measure.
We also obtain the orthogonal variant of this result. 
Thanks to classical results in representation theory, this result is closely related to strong asymptotic freeness for tensors, 
which we establish as a preliminary. 
To achieve this result, we need to develop four new tools, each of independent theoretical interest:
(i) a centered Weingarten calculus and uniform estimates thereof,
(ii) a systematic and uniform comparison of Gaussian moments and unitary moments of matrices,
(iii) a generalized and simplified operator-valued non-backtracking theory in a general $C^*$-algebra, and finally,
(iv) combinatorics of tensor moment matrices.
\end{abstract}

\section{Introduction}

Asymptotic freeness in the large dimension limit of independent GUE random matrices is a fundamental phenomenon discovered by Voiculescu 
in the nineties \cite{MR1094052}. It allows us to 
understand the spectrum of non-commuting polynomials in independent 
random matrices in the limit of large dimensions and under very general assumptions. 
Subsequently, Voiculescu proved that independent Haar unitary random matrices are almost surely asymptotically free as the dimension goes to infinity
\cite{MR1601878}.

While the above result addresses unitary groups, i.e.~the group of symmetries of a Euclidian space, it is natural to consider other groups
of symmetries, and in particular, of a finite set, i.e., symmetric groups. Here, the counterpart of Voiculescu's results was solved  by 
Nica in \cite{MR1197059}.
Asymptotic freeness for random permutations notably provides fine spectral information for operators acting on a random Schreier 
graph or random coverings of a fixed graph. It has also become folklore that some specific non-random permutations' asymptotic freeness can be achieved easily by explicitly exhibiting a sequence of finite quotients of a free group
whose kernels intersect with its trivial subgroup. 

As soon as compact matrix groups are involved,  the joint asymptotic behavior of independent random variables can legitimately be expected 
to depend on which representation of the group is being considered, and 
for example, although the above results are conclusive examples of asymptotic freeness, irrespective of the representation
in the case of the symmetric group, we are not aware of any complete result in this direction for unitary or orthogonal groups,
and the initial results deal instead with the very particular case of the fundamental representation. 
In a recent development by 
the second author, together with Gaudreau Lamarre and Male, addressed part of the problem
in \cite{MR3573218}, when a signature is fixed. 

However, although asymptotic freeness describes the macroscopic spectrum of non-commuting polynomials in the generators in large dimension efficiently,
it is not enough to analyze the existence of eigenvalues away from the limiting spectrum. 
The absence of such eigenvalues (also known as outliers) is a difficult problem whose positive answer corresponds to strong asymptotic freeness.
We refer to Section \ref{sec:main}
for a formal definition of strong asymptotic freeness.

The first breakthrough in this direction was achieved in \cite{MR2183281}
where the authors proved strong asymptotic freeness of independent GUE matrices, and it was followed by many improvements. A notable progress was 
strong asymptotic freeness of Haar unitaries in \cite{MR3205602}.
 Later, both authors in \cite{MR4024563} showed that strong asymptotic freeness holds 
for random independent permutations (i.e., Haar unitaries on the symmetric group) when viewed as $n\times n$ matrices.

To continue the parallel between asymptotic freeness and strong asymptotic freeness,
let us point out here that all results on strong asymptotic freeness involving independent copies of Haar measure on groups are basically 
only valid with respect to the
fundamental representation --- with the notable exception of   \cite{MR4024563}, where we show that we can also extend the result
to the tensor product of two fundamental representations. 
For particular operators, though, partial results have also been obtained in the context of quantum expanders, 
see \cite{MR3226740,MR2553116,MR2453786,MR2486279}.
In other words, there is no hint at the fact that the same strong asymptotic freeness would hold if one were considering
all representations simultaneously --- or equivalently, for our purposes, the left regular representation. 

This seems, however, to be a very natural question, as there exist sequences of permutations
(such as those of the  Ramanujan graphs, \cite{MR0484767,MR963118})
for which a simple polynomial --- the sum of generators --- is known to behave well for all nontrivial irreducible representations. 
This prompts us to state a question:
given an integer $n$, consider a nontrivial signature $\rho$, and the irreducible representation 
$V_{n,\rho}$ of $\Un$ associated with the signature. 
We consider the quotient $\mathbb{U}_{n,\rho}$ of $\Un$ viewed as a matrix subgroup of $\mathbb{U}(V_{n,\rho})$. 
In particular, the Haar measure on $\mathbb{U}_{n,\rho}$ has `less randomness' than the Haar measure on the full unitary group 
$\mathbb{U}(V_{n,\rho})$, 
in the sense that the group is of a much smaller dimension --- say, as a manifold. Then the question is: \emph{for which sequence of pairs 
$(n,\rho )$ is it true that $d$ independent Haar random variables of 
$\mathbb{U}_{n,\rho}$ are strongly asymptotically free?}

As a trivial observation, $\dim (V_{n,\rho})$ must tend to infinity because freeness does not occur in finite dimension, so
either $n$ or $\rho$ must tend to infinity. Fixing $n$ and letting $\rho$ tend to infinity seems to be a fascinating
problem to which we do not have an answer. 
Our main theorem is an answer to this theorem when $\rho$ is fixed, and $n$ tends to infinity. Our result is that
strong asymptotic freeness holds as soon as $\rho$ is nontrivial. It can be made uniform, provided
that $\rho$ varies slowly as a function of $n$.
The precise statement can be found in Corollary \ref{cor:main}.

The above corollary is a consequence of our main theorem, Theorem \ref{th:main}. This result establishes strong asymptotic freeness for 
tensor products of the fundamental representation 
and of the contragredient representation on the orthogonal of fixed points. 
We believe the main theorem to be of independent interest in mathematical physics and quantum information theory, 
but for the purpose of this introduction, let us
note that, as explained in Section \ref{sec:main}, thanks to elementary results in representation theory and a few classical facts of operator algebras, 
Corollary \ref{cor:main} and Theorem \ref{th:main} are equivalent. 

Our main result, Theorem \ref{th:main}, can be interpreted as a $0$-$1$ law over natural sequences of irreducible representations: 
either it is trivial (if the sequence of representations is one-dimensional) or strongly asymptotically free (in all other cases). 
In other words, this quantifies very precisely the fact that the sole known obstructions to strong asymptotic freeness
are the fixed points of a representation. Hence, there does not seem to be intermediate behavior between
strong asymptotic freeness and triviality. 

Let us digress a bit to hint at the fact that strong asymptotic freeness can naturally be expected to be much harder to achieve than
plain asymptotic freeness.
To be more specific, let us try to investigate whether the operator norm of a non-commutative linear function of independent copies of 
Haar-distributed variables could be bounded above uniformly (which must happen in the case of strong asymptotic freeness
 thanks to the Haagerup inequality). 
A naive attempt could consist of a combination of a net argument and a union bound --- and this attempt would succeed 
in the case of the context of the fundamental representation. 
 For this purpose, for $\veps, \eta >0$, let us observe that the size of an $\eta$-net of vectors in the unit ball of $V_{n,\rho}$
is of the order of $(C/\eta)^{\dim(V_{n,\rho}) }$, whereas for any $1$-Lipschitz function, the likelihood of
being $\varepsilon$-away from the median is of order $\exp (-c\varepsilon^2 n)$; 
this follows from Gromov's comparison theorem, see \cite[Theorem 4.4.27]{MR2760897}.
The involved exponential speeds $n$ and $\dim(V_{n,\rho})$ are comparable as $n$ grows iff 
$V_{n,\rho}$ is the fundamental representation or its contragradient. 
In all other cases $\dim (V_{n,\rho})\gg n$ and it is impossible, given $\eta$, to fix $\varepsilon$ such that
$(C/\eta)^{\dim(V_{n,\rho}) }\cdot \exp (-c\varepsilon^2 n)$ remains small uniformly over the dimension. 
Hence, we might expect that `soft' geometric techniques such as those exposed in the monograph \cite{MR3699754} 
are not of use here and make it less obvious
that strong asymptotic must hold. 

Likewise, analytic proofs extending  \cite{MR3205602} and \cite{MR2183281} are not handy in this case because of the tensor structure
of the objects -- there is no prospect for an easy folding trick like in \cite{MR3205602} or 
a  Schwinger-Dyson equation like in \cite{MR2183281} because of the tensor structure. 
Because of this lack of geometric or analytic methods, 
a natural approach turns out to be based on moments, which is the core technology of this paper.
It is implemented with the help of an operator-valued non-backtracking theory.
The non-backtracking theory was observed by a few authors
to be very powerful in studying outliers in the context of graph theory,  see, e.g., \cite{Friedman1996,bordenaveCAT}.  
A version of the non-backtracking theory with non-commuting weights was developed further in \cite{MR4024563}
to prove strong asymptotic freeness of random permutations. 
Here, to achieve our goal, we generalize this theory even further, beyond the case of permutation operators, to the case of all bounded operators. 
The results are gathered in Section \ref{sec:SAFNB}.

However, moment methods require a good understanding of the moments of the random objects under consideration. While moment 
estimates for random permutations boil down to combinatorial formulas, the situation is more involved for random unitary or 
orthogonal matrices. In these cases, formulas for moments require Weingarten calculus, as developed in
\cite{MR2217291,MR1959915}.
One of our key results relies on the comparison of moments of unitary random matrices and Gaussian random matrices, which 
extends the line of research of \cite{MR2257653,MR3769809} and others. 
Our main result in this direction is Theorem \ref{theorem-with-brackets}, which quantifies how well unitary random matrices
can be estimated by Gaussian matrices. 
A critical preliminary result to achieve this goal is to develop a systematic method to handle centering, for which we use a bracket notation. 
This is based on another notion of lattices, and a similar moment theoretical work was initiated in  \cite{MR2830615}.
Our main result here is Theorem \ref{lemma:unitary-Wg-expansion3}.

The paper is organized as follows. This introduction is followed by Section \ref{sec:main}, which states the main results and describes the steps
of the proof and its applications. 
 Section \ref{sec:Wg} contains the necessary new developments on uniform centered Weingarten functions.
 Section \ref{sec:SAFNB} describes the non-commutative non-backtracking theory in full generality.
 Section \ref{sec:trace} is devoted to trace estimates based on the previous sections and the completion of the proof of the main theorems.
We finish the paper with Section \ref{sec:appendix}, which is an appendix in which we provide a constructive proof of Pisier's unitary linearization trick, on which this paper relies heavily. 

\paragraph{Acknowledgements} 
This paper was initiated while the first author was a visiting JSPS scholar at KU in 2018, and he acknowledges the hospitality of 
JSPS and of Kyoto University. 
BC was supported by JSPS KAKENHI 17K18734 and 17H04823 and CB by ANR grant ANR-16-CE40-0024.
We are indebted to Narutaka Ozawa for drawing our attention to reference \cite{MR2553116}. 
We also are very grateful to the anonymous referees for their efforts in improving the manuscript.

\section{Strong asymptotic freeness for random unitary tensors}
\label{sec:main}

\subsection{Main result}

In the sequel, for an integer $k \geq 1$, we set 
$\INT{k} = \{1, \ldots, k \}.$ 
For any compact group, there exists a unique probability measure that is left- and right-invariant under translation; it is called the 
normalized Haar measure.  In this paper, we are interested in the case of the unitary group $\Un$ and the orthogonal group $\On$. 

We focus on the unitary case to keep the paper to a reasonable length. The orthogonal case boils down to technical modifications of the unitary case, which
we summarize in Subsection \ref{subsec:orthogonal-variations}.
Let $d \geq 2$ be an integer and let $U_1, \cdots,U_{d}$ be independent Haar-distributed random 
unitary matrices in $\Un$.  For $d+1 \leq i \leq 2d$, we set $U_i = U_{i-d}^*$. We consider the involution on $\INT{2d}$, defined by 
$i^* = i+d$ for $1 \leq i \leq d$ and $i^* = i-d$  for $d+1 \leq i \leq 2d$. We thus have $U_{i^*}  = U_i^* $ for all $i \in \INT{2d}$.

If $M \in M_n(C)$ and $\veps \in \{\cdot, -\}$, we set $M^{\veps} = M$ for $\veps = \cdot$ and $M^\veps = \bar M$ for $\veps = -$. 
We fix a triple of integers $q = q_- + q_+ \geq 1$. For each $ i \in \INT{2d}$,  we introduce the unitary matrix in $\Unq$,
\begin{equation}\label{eq:defVi}
V_i = \bar U_i^{\otimes q_-} \otimes U_i^{\otimes q_+}  = U_i^{\veps_1} \otimes \cdots \otimes U_i^{\veps_{q}},
\end{equation}
where $(\veps_1,\ldots, \veps_{q } )\in \{.,-\}^ q$ is the symbolic sequence: $\veps_p = -$ for $p \leq q_-$ and $\veps_p  = \cdot$ otherwise.  
We recall the following fact:
\begin{proposition}
The average matrix $\dE V $
is
the orthogonal projection, 
denoted by $P_{H}$,
onto the vector subspace $H$ of elements invariant under (left) multiplication by  
 $\bar U ^ {q_-} \otimes U ^ {q_+}$ for all $U \in \Un$.
\end{proposition}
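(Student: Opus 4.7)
The plan is to recognize this as an instance of a classical fact from representation theory: for any continuous unitary representation $\pi$ of a compact group $G$ on a finite-dimensional Hilbert space, the Haar average $\int_G \pi(g)\, dg$ is the orthogonal projection onto the subspace of $\pi$-invariant vectors. I would verify the hypotheses in this specific situation and then derive the general fact inline.

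First, I would check that $U \mapsto V(U) := \bar U^{\otimes q_-} \otimes U^{\otimes q_+}$ is a continuous unitary representation of $\Un$ on $(\mathbb{C}^n)^{\otimes q}$. Unitarity is clear since $\bar U$ is unitary whenever $U$ is, and tensor products of unitaries are unitary. The homomorphism property $V(UW) = V(U)V(W)$ follows from $\overline{UW} = \bar U\,\bar W$ and multiplicativity of the tensor power, $(UW)^{\otimes k} = U^{\otimes k}W^{\otimes k}$. Continuity is manifest.

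Next, set $P := \dE V = \int_{\Un} V(U)\, dU$. By left-invariance of Haar measure, for every $W \in \Un$,
\begin{equation*}
V(W) P = \int V(W)V(U)\, dU = \int V(WU)\, dU = \int V(U)\, dU = P.
\end{equation*}
In particular $\mathrm{range}(P) \subset H$. Conversely, if $v \in H$ then $V(U)v = v$ for every $U$, so $Pv = v$; thus $P$ acts as the identity on $H$ and has range exactly $H$. To conclude that $P = P_H$ it remains to see that $P$ is self-adjoint. Using $(\bar U)^* = \overline{U^{-1}}$ together with $U^* = U^{-1}$ for unitary $U$, one computes $V(U)^* = V(U^{-1})$, and then invariance of Haar measure under $U \mapsto U^{-1}$ gives
\begin{equation*}
P^* = \int V(U)^*\, dU = \int V(U^{-1})\, dU = \int V(U)\, dU = P.
\end{equation*}
A self-adjoint idempotent-like operator whose range is $H$ and which is the identity on $H$ is automatically the orthogonal projection onto $H$; indeed, $P^2 = \int P V(U) \, dU = \int P\, dU = P$ using the right-translation analogue of the identity above (applied after taking adjoints).

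There is essentially no obstacle here — the result is a textbook consequence of Haar averaging. The only places that require a little care are the bookkeeping of complex conjugation when verifying $V(U)^* = V(U^{-1})$, and the observation that $U \mapsto \bar U$ is itself a group homomorphism on $\Un$, so that mixing $\bar U$ and $U$ factors in the tensor product still yields an honest representation.
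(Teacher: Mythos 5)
Your proof is correct, and it follows the standard Haar-averaging template; the one step where you genuinely diverge from the paper is in showing that the image of $P=\dE V$ does not exceed $H$. You derive the intertwining identity $V(W)P=P$ for all $W$ from left-invariance of the Haar measure, which immediately gives $\mathrm{range}(P)\subseteq H$; the paper instead argues by contradiction that if $x\notin H$ then some $V_0$ moves $x$, and strict convexity of the Euclidean norm forces $\|\dE V x\|_2<\|x\|_2$, so $x$ cannot lie in the image of the (idempotent self-adjoint) average. Your route is the more standard textbook one and avoids the convexity argument entirely; the paper's route has the minor advantage of not needing to write out the intertwining relation, but both ultimately rest on the same two invariance properties of Haar measure (under left translation and under $U\mapsto U^{-1}$), and your verification of $V(U)^{*}=V(U^{-1})$ for the mixed conjugate/non-conjugate tensor factors is the same bookkeeping the paper leaves implicit. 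No gaps.
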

Although this result is classical and its proof is elementary, it is not very well referenced in our specific context, so we include an
outline for the sake of being self-contained, see also \cite[Section III.A]{MR2553116}. 

\begin{proof}[Outline of the proof]
The fact that $\dE V $ is a projection 
follows from a direct application of the fact that the distribution of a Haar-distributed element
$U$ is the same as 
$\tilde U\cdot \hat U$ where $\tilde U$ and $\hat U$ are two iid copies of $U$.
The fact that it is self-adjoint follows from the fact that $U$ and $U^*$ have the same distribution. 
The fact that every invariant vector is invariant under the mean is trivial. 
Finally, let $x\notin H$. By definition, there exists $V_0$ such that $V_0x\ne x$. Since $V$ is a unitary operator and therefore preserves the Euclidean norm,
it follows from the strict convexity of the Euclidean norm that $\|\dE V x\|_2 <\|x\|_2$ and therefore $x$ is not in the image of $\dE V$, which 
concludes the proof.
\end{proof}
We refer to
Subsection \ref{subsec:Wg} for a method to compute $\dE V$. For example if the vector $(\veps_1,\ldots, \veps_{q } )$ is not balanced, 
that is $q_- \ne q_+$ then $\dE V = 0$. We set 
\begin{equation}\label{eq:bracket}
[V_i] = V_i - \dE V_i = V_i- P_{H}.
\end{equation}

For a fixed integer $r \geq 1$, let $(a_0, a_1, \ldots, a_{2d})$ be matrices in $M_r(\dC)$.  We introduce the following matrix on 
$\dC^r \otimes \dC^{n^q}$: 
\begin{equation}\label{eq:defA}
A = a_0 \otimes 1 + \sum_{i=1}^{2d} a_i \otimes V_i,
\end{equation}
where $1$ is the identity. From what precedes, the vector space $H_r =  \dC^r \otimes H$ is an invariant subspace of $A$ and its adjoint $A^*$. 
We denote by $H_r ^\perp$ the orthogonal of $H_r$ and denote by $A_{|H_r^\perp}$ the restriction of $A$ to this vector space. Our goal 
is to describe the spectrum of $A_{|H_r^\perp}$ as $n$ goes to infinity.

To this end, the key observation is the following. Consider the unitary representation of the free group $\Fd $ on $ \dC^{n^q}$ defined 
by $\pi(g_i) = V_i$, where $(g_i)_{i \in \INT{2d}}$ are the free generators and their inverses: $g_{i^*} = g_i ^{-1}$. Note that 
this representation $\pi$ is random and depends implicitly on $n$ and $(q_-,q_+)$.   The matrix $A$ is the image by $\pi$ of the 
following operator in $\ell^2(\Fd)$ in the (left)-group algebra on $\Fd$:
\begin{equation}\label{eq:defAfree}
A_\star = a_0 \otimes 1 + \sum_{i=1}^{2d} a_i \otimes \lambda(g_i),
\end{equation}
where $g \mapsto \lambda (g)$ is the left-regular representation (that is, left multiplication by group elements).

The main technical result of this paper is the following theorem. In the sequel $\| \cdot \|$ denotes the operator norm of an operator in 
a Hilbert space: $\| T \| = \sup_{x \ne 0} \| Tx \|_2 / \|x \|_2$ where $\| x\|_2$ is the Euclidean norm in the Hilbert space.  

\begin{theorem}\label{th:main}
There exists a universal constant $c >0$ such that the following holds. Let $r \geq 1$ be an integer and let  $(a_0, a_1, \ldots, a_{2d})$ 
be matrices in $M_r(\dC)$. If $q = q(n)$ is a sequence such that $q \leq c \ln(n) / \ln (\ln (n))$ for all $n$ large enough, then with 
probability one, we have 
$$
\lim_{n \to \infty} \| A_{|H_r^\perp} \| = \| A_\star \|.
$$
\end{theorem}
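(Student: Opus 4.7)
The lower bound $\liminf_{n\to\infty} \|A_{|H_r^\perp}\| \ge \|A_\star\|$ is the softer direction: it follows from asymptotic freeness in $\ast$-distribution of the tensor family $(V_1,\ldots,V_{2d})$, available for fixed $\rho$ from \cite{MR3573218}, combined with the standard lower semicontinuity of operator norms of limits in $M_r(\dC)\otimes C_r^{*}(\Fd)$. The substance of the theorem is the matching upper bound, which I would attack by the moment method. As a preliminary reduction, each $V_i$ commutes with $P_H$ and acts as the identity on $H$, so that on $H_r^\perp$ one has
$$
A_{|H_r^\perp}=\Bigl(a_0\otimes 1+\sum_{i=1}^{2d} a_i\otimes [V_i]\Bigr)_{|H_r^\perp},
$$
and it suffices to work throughout with the centered tensors $[V_i]=V_i-P_H$.

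The overall plan is to produce a bound
$$
\dE\,\Tr\bigl[(A_{|H_r^\perp}^{\ast}A_{|H_r^\perp})^{\ell}\bigr]\le \bigl(\|A_\star\|^2+o(1)\bigr)^{\ell}\cdot\mathrm{poly}(n,q,\ell)
$$
for a sequence $\ell=\ell(n)$ of order $\log n$, then convert it to the almost-sure conclusion through Markov plus Borel--Cantelli, once concentration of $\|A\|$ about its median has been secured by Gromov's inequality applied to the $1$-Lipschitz map $(U_1,\ldots,U_d)\mapsto \|A\|$ on $\Un^d$. The backbone of the trace estimate is the operator-valued non-backtracking construction of Section \ref{sec:SAFNB}: generalizing \cite{MR4024563} from permutation weights to arbitrary bounded weights, I would build a block operator $B$ on an enlarged Hilbert space indexed by $\INT{2d}$ whose high powers enumerate non-backtracking walks on the Cayley graph of $\Fd$, each weighted by a product of $a_i$'s tensored with a product $[V_{i_1}]\cdots[V_{i_k}]$ whose consecutive letters never cancel. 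A suitable norm-comparison inequality then reduces the control of $\|A_{|H_r^\perp}\|$ to that of a high power of $B$, and the analogous construction inside $M_r(\dC)\otimes C_r^{*}(\Fd)$ identifies the limiting target as $\|A_\star\|$.

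The next step is to compute $\dE\,\Tr(B^\ast B)^{\ell}$ by Weingarten expansion of the monomials $[V_{i_1}]\cdots [V_{i_k}]$. The centered Weingarten calculus of Section \ref{sec:Wg} (Theorem \ref{lemma:unitary-Wg-expansion3}) is designed exactly for this: the bracket structure kills the ``short'' pairings in the Wick expansion that would otherwise produce spurious terms of size $n^q$, and Theorem \ref{theorem-with-brackets} quantifies how well each tensor $[V_i]$ can be replaced by a centered Gaussian tensor. After this substitution, the expectation becomes a Wick-type sum over matchings of the $q$ legs of each bracket factor, indexed by tuples of permutations in $S_q\times S_q$ that must be compatible with the non-backtracking structure of the walk.

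The hardest step, and what I expect to be the principal obstacle, is the combinatorial bookkeeping of these tensor moment matrices. Each pairing yields a power of $n$ equal to the number of cycles of an associated diagram built from the non-backtracking word together with the leg matchings; I would need to isolate the dominant diagrams --- those that reproduce the free-product moments of $A_\star$ --- and prove that every other diagram loses at least one factor of $n^{-1}$. The number of candidate diagrams grows as $(Cq)^{O(\ell q)}$, so absorbing them into the polynomial error for $\ell\asymp \log n$ forces $q\le c\log n/\log\log n$, which is exactly the hypothesis of the theorem; beyond that threshold the tensor entropy swamps the non-backtracking cancellations. Once the trace bound is in hand, Markov's inequality, concentration, and Borel--Cantelli give the almost-sure upper bound $\limsup_n \|A_{|H_r^\perp}\|\le \|A_\star\|$, completing the proof.
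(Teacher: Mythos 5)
Your plan follows the same architecture as the paper's proof: reduce to the centered operators $[V_i]$, pass to an operator-valued non-backtracking operator, bound an expected high trace via the centered Weingarten calculus and a Gaussian comparison, and control the diagram count under the constraint $q\le c\ln n/\ln\ln n$. Two steps, however, are elided in a way that hides genuine content. First, there is no ``norm-comparison inequality'' between $\|A_{|H_r^\perp}\|$ and a power of a single non-backtracking operator $B$ built from the weights $a_i\otimes[V_i]$. The actual mechanism (Propositions \ref{prop:nonbackgeneral} and \ref{prop:nonback2}) is an Ihara--Bass type spectral correspondence: for each spectral parameter $\mu\notin\hull(\sigma(A_\star))$ one forms a \emph{different} operator $B_\mu$ whose weights $\hat a_i(\mu)=G_{oo}(\mu)^{-1}G_{og_i}(\mu)$ are built from the resolvent of $A_\star$, and $\mu\notin\sigma(A)$ iff $1\notin\sigma(B_\mu)$. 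To conclude that $\sigma(A)$ stays near $\sigma(A_\star)$ one must then control $\rho(B_\mu)$ \emph{uniformly over all such $\mu$}, which in the paper requires a net argument over the compact family of admissible weight matrices together with the continuity of $a\mapsto\rho(B_\star(a))$ (Theorem \ref{prop:edgeAB} and the last subsection of Section \ref{sec:trace}); your plan does not address this uniformity. Second, Markov applied to $\dE\,\Tr(B^*B)^{\ell}$ with $\ell\asymp\log n$ only yields a failure probability like $(1+\delta)^{-\ell}$, which is not summable and moreover must survive a union bound over the net; the paper instead raises $B^{\ell}(B^{\ell})^*$ to a much higher power $\theta\asymp n^{\delta/q}\ge(\ln n)^2$ to extract a bound of order $\exp(-(\ln n)^2)$. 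Your proposed substitute --- Gromov concentration for the scalar $1$-Lipschitz-up-to-$O(q)$ map $(U_1,\dots,U_d)\mapsto\|A\|$ --- is a viable alternative for the almost-sure upgrade and is not used in the paper, but note that it upgrades a statement about a fixed polynomial $A$, whereas the quantity you actually estimate by moments is $\rho(B_\mu)$ for the $\mu$-dependent weights, so you would still need to articulate how concentration of $\|A\|$ interfaces with the non-backtracking reduction. The remainder of the plan (centered Weingarten, Gaussian comparison, diagram counting, and the origin of the threshold on $q$) matches the paper's Sections \ref{sec:Wg} and \ref{sec:trace}.
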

Thanks to the  ``linearization trick" of the theory of operator algebras (see Pisier \cite{MR1401692} and the monograph by 
Mingo and Speicher \cite[p256]{MR3585560} for an accessible treatment), Theorem \ref{th:main} has 
an apparently much more general corollary that we describe in the sequel of this subsection.
Although Pisier's linearization trick is enough for our paper, we include, for the convenience of the reader, an appendix in Section \ref{sec:appendix}, which provides a self-contained constructive proof of the linearization theorem.
A reader interested in linearization tricks might find this section of independent interest.
In the simplest case where $r=1$, all $a_i$'s are equal to one and $q_- = q_+$, then Theorem \ref{th:main} is contained 
in \cite[Theorem 6]{MR2553116}. In this case, a more direct method of moments based on Schwinger-Dyson equations can be performed.

Let us start with some basic notation of representation theory.
For any $n$, let $\rho = \rho (n)$ be a signature (to lighten the notation, we omit the dependence in $n$). 
More precisely, $\rho$ is a pair of  Young diagrams: two pairs of non-negative integer sequences $\lambda =(\lambda_1,\lambda_2,\ldots)$ and 
$\mu= (\mu_1,\mu_2,\ldots)$ satisfying the following properties: 
$\mu_i \geq \mu_{i+1}$, $\lambda_i\geq \lambda_{i+1}$ and $\sum_i\lambda_i+\mu_i<\infty$.
We introduce $l(\lambda)=\max\{i, \lambda_i>0\}$
and $|\lambda |=\sum_i\lambda_i$ (and likewise for $\mu$). For notation, we refer, for example to \cite{MR0473098}.
If $n\ge l(\lambda)+l(\mu)$ we call
$V_{n,\rho}$ the Hilbert space such that the group homomorphism $\tilde \rho: \Un \to \mathbb{U}(V_{n,\rho})$ 
is the
 irreducible representation of $\Un$ whose highest weights are 
 $$\lambda_1\ge\cdots\ge\lambda_{l(\lambda )}\ge 0\ge \cdots \ge 0
\ge -\mu_{l(\mu)}\ge\ldots \ge -\mu_{1}.$$ 
The signature formulation is a reformulation of the highest weight theory, but it is convenient as it allows
to define simultaneously representations for all unitary groups $\Un$ with $n\ge l(\lambda)+l(\mu)$.
We call $\mathbb{U}_{n,\rho}=\tilde\rho (\Un)$. It is the quotient of 
$\Un$ under the above representation map. 
We view it as a matrix subgroup of $\mathbb{U}(V_{n,\rho})$.
Letting  $U_i$ be a Haar-distributed random matrix in $\Un$, we call $W_i:=\tilde\rho (U_i)$ its image under the irreducible representation map associated with $\rho$.
Since the map is surjective, $W_i$ is Haar-distributed according to $\mathbb{U}_{n,\rho}$.
Consider any operator on $\dC^r \otimes \ell^2 (\Fd)$ of the form 
$$
P_\star = \sum_{g \in \Fd} a_g \otimes \lambda (g),
$$
where for all $g \in \Fd$,  $a_g$ is a matrix in $M_r(\dC)$. We assume that $a_g$ is non-zero for a finite number of group elements. 
In other words, $P_\star$ is a matrix-valued non-commutative polynomial. 
The image of $P_\star$ by the representation $\pi$ is denoted 
by $P$:
$$
P = \sum_{g \in \Fd} a_g \otimes W(g),
$$
where $W(g) = W_{i_1} \ldots W_{i_k}$ if $g = g_{i_1} \ldots g_{i_k}$. Obviously, $H_r$ and $H_r^\perp$ are again invariant subspaces 
of $P$ and $P^*$. The operator $P_\star$ (and thus $P$) is self-adjoint if the following condition is met: 
\begin{equation}
\label{eq:symP}
a_{g^{-1}} = a_g^* \quad \hbox{ for all } g \in \Fd.
\end{equation}
The following result is a corollary of Theorem \ref{th:main}. 
\begin{corollary}\label{cor:main}
Let $r \geq 1$ be an integer, $P$ and $P_\star$ be as above, and $c$ be as in Theorem \ref{th:main}. If $q = q(n)=|\lambda |+|\mu |$ 
is a sequence such that 
$q \leq c \ln (n) / \ln (\ln (n))$ for all $n$ large enough, then with probability one, we have 
$$
\lim_{n \to \infty} \| P_{|H_r^\perp} \| = \| P_\star \|.
$$
Moreover, if \eqref{eq:symP} holds then, with probability one, the Hausdorff distance between the spectrum of $P_{|H_r^\perp}$ and 
$P_\star$ goes to $0$ as $n$ goes to infinity. 
\end{corollary}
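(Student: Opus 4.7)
The plan is to deduce Corollary \ref{cor:main} from Theorem \ref{th:main} by combining three ingredients: Pisier's linearization trick, which reduces the problem to a linear pencil; classical Schur--Weyl duality, which realizes $V_{n,\rho}$ as an invariant subspace of $\dC^{n^q}$ contained in $H^\perp$; and the asymptotic freeness of random unitaries in the representation $\tilde\rho$ (as in \cite{MR3573218}, or as a byproduct of the moment machinery developed for Theorem \ref{th:main}), which supplies the matching lower bound.

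For the upper bound, I would first invoke Pisier's linearization trick (see Section \ref{sec:appendix}) to replace the general matrix polynomial $P_\star$ by a linear pencil $\tilde A_\star = \tilde a_0 \otimes 1 + \sum_i \tilde a_i \otimes \lambda(g_i)$; by universality of the construction, the same coefficients $\tilde a_i$ plugged into the representation $\pi$ produce a pencil $\tilde A$ whose norm controls $\|P_{|H_r^\perp}\|$, so it is enough to prove the convergence in the degree-one case. Next, setting $q_+ = |\lambda|$ and $q_- = |\mu|$, classical Schur--Weyl duality provides an isometric, $\Un$-equivariant embedding $\iota\colon V_{n,\rho} \hookrightarrow \dC^{n^q}$, under which $W_i$ acts as the restriction of $V_i$ to $\iota(V_{n,\rho})$; since $\rho$ is nontrivial, $\iota(V_{n,\rho})$ is orthogonal to $H$. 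Hence $\dC^r \otimes \iota(V_{n,\rho})$ sits inside $H_r^\perp$ as an invariant subspace of the operator $A$ from \eqref{eq:defA} (built with the $\tilde a_i$'s), on which $A$ acts as $\tilde A$, so $\|\tilde A\| \leq \|A_{|H_r^\perp}\|$. Theorem \ref{th:main} then yields $\limsup_n \|P_{|H_r^\perp}\| \leq \|P_\star\|$ almost surely.

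For the matching lower bound, I would appeal to the asymptotic freeness of $(W_1,\ldots,W_{2d})$ with respect to the normalized trace on $V_{n,\rho}$. Combined with the Powers-type inequality $\tau_n((T^*T)^m) \leq \|T\|^{2m}$ and the identity $\|P_\star\|^2 = \lim_m \tau_\star((P_\star^*P_\star)^m)^{1/m}$ valid in the reduced $C^*$-algebra of $\Fd$ (because $\tau_\star$ is faithful), this gives $\|P_\star\| \leq \liminf_n \|P_{|H_r^\perp}\|$ almost surely, and hence the desired convergence of operator norms. For the Hausdorff claim, under \eqref{eq:symP} both operators are self-adjoint; applying the norm convergence to $q(P)$ for an arbitrary polynomial $q$ (still a matrix polynomial in the generators $\lambda(g_i)$) and using Stone--Weierstrass together with continuous functional calculus yields Hausdorff convergence of the spectra by a standard argument.

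The main obstacle in this program is clearly Theorem \ref{th:main} itself; once that is available, the present reduction is essentially formal. One subtle technical point is that Pisier's linearization must produce coefficients $\tilde a_i$ that are independent of which representation of $\Fd$ one evaluates in, a universality property guaranteed by the explicit construction in Section \ref{sec:appendix}.
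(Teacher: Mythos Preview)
Your proposal is correct and follows the same outline the paper sketches after the statement: Pisier's linearization trick to pass from polynomials to linear pencils, Schur--Weyl duality to embed $V_{n,\rho}$ isometrically and equivariantly into $H^\perp \subset \dC^{n^q}$ (so that the irrep norm is dominated by $\|A_{|H_r^\perp}\|$, to which Theorem \ref{th:main} applies), and asymptotic freeness for the lower bound. The Hausdorff argument via continuous functional calculus is also the standard one the paper has in mind when it refers to \cite[Section 6]{MR4024563}.

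One small imprecision worth tightening: you phrase the linearization as producing \emph{a} single pencil $\tilde A_\star$ whose norm controls $\|P_\star\|$. Pisier's result (and the constructive version in Section \ref{sec:appendix}) is not of this ``one pencil per polynomial'' form; rather, control of the norm of \emph{all} linear pencils with matrix coefficients of arbitrary size is what yields control of $\|P\|$, via an iterated square-root construction. This matters for the order of your argument: you derive only the upper bound $\limsup\|\tilde A_W\|\le\|\tilde A_\star\|$ from the embedding and then invoke linearization, whereas Pisier's statement as written assumes two-sided convergence on pencils. The fix is painless---either apply linearization already at the tensor level (where Theorem \ref{th:main} gives full convergence) and only then restrict to the irrep, or note that the one-sided inequality on pencils still propagates to polynomials because a unital complete contraction sending unitaries to unitaries is automatically a $*$-homomorphism (hence contractive on the whole algebra). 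Either route closes the gap, and the rest of your plan goes through as written.
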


The reason why
Theorem \ref{th:main} is sufficient to allow general polynomials in the above Corollary \ref{cor:main}
follows from \cite[Section 6]{MR4024563}. 
This is a consequence of \cite[Proposition 6]{MR1401692}.
Details can also be found in the appendix of section \ref{sec:appendix}.
As for the fact that we can replace tensors with irreducible representation, this follows from the following two facts: 
(i) contracting by an orthogonal projection does not increase the operator norm, and (ii) there exists an orthogonal projection $Q$ 
that commutes with all $V_i$'s such that on $\Im(Q)$, $W_i=QV_iQ$, when $q_+=|\lambda |, q_-=|\mu|$.
We refer, for example, to \cite{MR2522486} for details on the tools involved, such as Schur-Weyl duality.

Corollary \ref{cor:main} establishes the almost-sure strong asymptotic freeness of unitary representations of independent 
Haar-distributed unitary matrices provided that the unitary representation is a sub-representation of a tensor product representation of 
not too large dimension. We observe that this last point is critical. Indeed, the determinant is a representation of dimension $1$ (which is 
a sub-representation of the tensor product representation with $q = n$), and there is no freeness in nontrivial finite-dimensional spaces.

\subsection{Overview of the proof}

We now explain the strategy behind the proof of Theorem \ref{th:main}. The strategy is the same as the one used in  \cite{MR4024563} 
for permutation matrices. The model in  \cite{MR4024563} is, however, very different, and new technical achievements were necessary 
for tensor products of unitary matrices.

First of all, from \cite{MR4024563}, it is enough to prove that, for any $\veps > 0$, we have 
\begin{equation}\label{eq:tbdA}
\| A_{|H_r^\perp} \| \leq  \| A_\star \| + \veps,
\end{equation}
with high probability. To achieve this, it is possible to restrict ourselves to self-adjoint operators. Indeed, the operator norm of an 
operator $M$ is also the square root of the right-most point in the spectrum of the non-negative operator
$$
\begin{pmatrix}
0 & M \\
M^*& 0 
\end{pmatrix} = E_{12} \otimes M + E_{21} \otimes M^*,
$$
where $E_{ij}$ is the $2 \times 2$ canonical matrix whose entry $(i,j)$ is equal to $1$ all other entries are equal to $0$. In particular, 
if $A$ is of the form \eqref{eq:defA}, at the cost of changing $r$ in $2r$, we may assume without loss of generality that 
\begin{equation}
\label{eq:symA}
a_{i^*} = a_{i}^* \quad \hbox{for all $i \in \INT{2d}$}.
\end{equation}

For such a given operator $A$, we will define a family of companion operators, denoted by $B_\mu$ and indexed by a real parameter 
$\mu$, with the property that if, for all $\veps >0$, with high probability, for all $\mu$, we have
\begin{equation}\label{eq:tbdB}
\rho( B_\mu) \leq  \rho((B_\star)_\mu) + \veps
\end{equation}
then \eqref{eq:tbdA} holds. In the above expression, $(B_\star)_\mu$ is the companion operator of $A_\star$ defined in 
\eqref{eq:defAfree} and $\rho(M) = \sup \{ |\lambda| : \lambda \in \sigma(M)\}$ is the spectral radius. These operators $B_\mu$ are 
called the non-backtracking operators. This will be explained in Section \ref{sec:SAFNB}. This part is an extension of 
\cite{MR4024563} in a more general setting.  

It is not a priori obvious why the claim \eqref{eq:tbdB} is easier to prove than claim \eqref{eq:tbdA}. A reason is that the powers 
of $(B_\star)_\mu$ are much simpler to compute than the powers of $A_\star$. 

In Section \ref{sec:trace}, we prove that \eqref{eq:tbdB} holds by using the expected high trace method popularized by F\"uredi and 
Koml\'os \cite{MR637828} in random matrix theory. It can be summarized as follows: assume that we aim at an upper bound of the 
form $\rho(M) \leq ( 1 + o (1)) \theta$ for some $\theta >0$ and $M \in M_n (\dC)$ random. We observe that for any $\ell$ integer, 
$$
\rho ( M)^{2\ell} \leq \| M^{\ell}  \|^2 =  \| M^\ell (M^*)^\ell \| \leq \tr (  M^\ell (M^*)^\ell).
$$
Moreover, at the last step, we lose a factor at most $n$ (and typically of this order). If we can prove that  
$$
\EE \tr (  M^\ell (M^*)^\ell) \leq n \theta^{2\ell},  
$$
then we will deduce from Markov's inequality that the probability that $\rho(M) \leq n^{1/(2\ell)} ( 1+ \delta) \theta$ is at least 
$1 - (1+ \delta)^{-\ell}$. In particular, if $n^{1/(2\ell)} \to 1$, that is $\ell \gg \ln(n)$, then this last upper bound is sharp enough 
for our purposes.

 A usual strategy to evaluate $ \EE \tr (  M^\ell (M^*)^\ell)$ is to expand the trace and the powers as the sum of product matrix entries 
 and then use the linearity of the expectation. We thus need to combine two ingredients: (i) a sharp upper bound on the expectation of 
 the product of matrix entries in terms of combinatorial properties of the entries and (ii) a counting machinery to estimate the number 
 of entries in the trace that have the given combinatorial properties. 

In our setting, the matrix $M$ is the non-backtracking matrix $B_\mu$ and $\theta = \rho( (B_\star)_\mu)$ for a fixed\,$\mu$. Among 
the difficulties, the value of $\theta$ is unknown exactly, and also, a probabilistic control of an event where all $\mu$ are considered is 
needed in \eqref{eq:tbdB}. These issues were already present in \cite{MR4024563}.  Here, the presence of the tensor 
products will also create a significant complication in the counting arguments of ingredient (ii). An important step for ingredient (i) is performed 
in Section \ref{sec:Wg}, where we prove a new estimate on the expectation of the product of a large number of entries of a Haar unitary 
matrix. This will be done by developing recent results on Weingarten calculus. 
   
\section{High order Gaussian approximation for random unitary\;matrices} 
\label{sec:Wg}

This section aims to develop an efficient machinery to compare the expectation of products of entries of a Haar-distributed unitary 
matrix with the average of the same product when we replace the unitary matrix with a complex Gaussian matrix. The main results of 
these sections are Theorem \ref{theorem-warmup} and Theorem \ref{theorem-with-brackets} below. 
   
\subsection{Wick calculus}

In this subsection, we recall the classical Wick formula. We then introduce a centered version which is new.

\subsubsection{Wick formula}

Let $V$ be a real vector space of real centered Gaussian variables.
It is called a real Gaussian space. 
Similarly, let $W$ be a complex vector space of 
complex centered Gaussian variables. It is called a complex Gaussian space. 
 In both cases, the addition is the regular addition of real (respectively complex) valued random variables, and likewise for the scaling.  

Let $(x_i)_{i \in I}$ be iid real centered Gaussian variables indexed by a countable set $I$. Then $\SPAN ((x_i)_{i \in I})$ is a real vector space, 
and any real vector space can be realized in this way. 
Finally, a real Gaussian vector space comes with the scalar product $(x,y)\mapsto \EE(xy)$ and 
similarly, a complex Gaussian vector space comes with the Hilbert product  $(x,y)\mapsto \EE(\overline{x}y)$.

Wick's Theorem asserts that the scalar product is enough to recover the structure of the Gaussian space completely, 
in other words, there is a one-to-one correspondence (in law) between Gaussian spaces and their Hilbert structure; see Janson \cite[Chapter 3]{MR1474726}.
Everything relies on a moment formula (which, in the case of Gaussian variables, determines the distribution).
In the real case, 
$$\EE(x_1\ldots x_k)=\sum_{p\in P_k} \EE_p(x_1,\ldots , x_k),$$
where $P_k$ is the collection of pair partitions of $\INT{k}$, typically denoted by
\begin{equation}\label{eq:notep}
p= \{ p_1 , \ldots, p_{k/2} \} = \{\{i_1,j_1\},\ldots , \{i_{k/2},j_{k/2}\}\}
\end{equation}
with $i_l<j_l$ and $i_{l}<i_{l+1}$ (obviously $P_k$ is empty when
$k$ is odd), and, under this notation, 
$$\EE_p(x_1,\ldots , x_k)=\EE(x_{i_1}x_{j_1})\ldots \EE(x_{i_{k/2}}x_{j_{k/2}}).$$
In the complex case,
\begin{equation}\label{eq:WickC}
\EE(g_1\ldots g_k \overline h_1\ldots \overline h_k)=\sum_{\sigma\in S_k} \EE_{\sigma} ( g_1,\ldots , g_k , h_1, \ldots  , h_k).
\end{equation}
where $S_k$ is the permutation group on $\INT{k}$ and  
$$
\EE_{\sigma} ( g_1,\ldots , g_k , h_1,\ldots  , h_k) =  \prod_{l=1}^k  \EE(g_l \overline h_{\sigma(l)}).
$$
Note that the complex case  can be deduced directly from the real case thanks to the real structure of complex Gaussian spaces 
($S_k$ appears as a subset of $P_{2k}$ by identifying a permutation $\sigma \in S_{2k}$ with the partition $p = \{p_1, \ldots, p_{k} \}$ 
of $P_{2k}$ with $p_l = \{ l , k + \sigma(l)  \}$).
 As for the real case, the formula is trivial when the $x_i$'s are all the same (this is the formula for the moments of a Gaussian).
For the general case, after observing that both the right-hand side and the left-hand side are $k$-linear and symmetric, we conclude
by a polarization identity. 

\subsubsection{Centered Wick formula}

If $X$ is a vector valued integrable random variable, we call $[X]$ the centered random variable 
\begin{equation}\label{eq:defbracket}
[X] = X-\EE(X).
\end{equation}

For integer $T \geq 1$, the Wick formula can  be extended as follows: if $\pi = (\pi_t)_{t\in \INT{T}}$ is a partition of $\INT{k}$, 
we have, for real Gaussian variables,
$$
\EE \PAR{ \prod_{t=1}^T \big[ \prod_{i \in \pi_t} x_{i} \big] } = \sum_{p \in P_{ k}(\pi)}  \EE_p(x_1,\ldots , x_k),
$$
where $  P_{k}(\pi)$ is the subset of $P_k$  of pair partitions 
$p=\{p_1,\ldots ,p_{k/2}\}$ with the following property: for each block $\pi_t $ of the partition, 
there exists at least one $j$ such that the pair $p_j$ of $p$  has one element in $\pi_t$
and one element outside. 
Although this formula is not mainstream in probability theory, it follows directly from 
standard inclusion-exclusion-type formulas (see Lemma \ref{le:gaussiansieve} below for the complex case).
A similar formula holds for a complex Gaussian space: 
$$
\EE \PAR{ \prod_{t=1}^T \big[ \prod_{i \in \pi_t} g_{i} \bar h_i \big]}=\sum_{\sigma\in S_k(\pi)}   \EE_{\sigma} ( g_1,\ldots , g_k , h_1, \ldots  , h_k),$$
where $S_k(\pi)$ is the subset of $S_k$ of permutations $\sigma$ such that for each block $\pi_t$, there exists at least one $i \in \pi_t$ 
such that $\sigma(i) \notin \pi_t$.  
We will use these results in the proof of Theorem \ref{theorem-with-brackets}
when we compare Gaussian moments and unitary moments.

\subsection{Weingarten calculus with brackets}
\label{subsec:Wg}
\subsubsection{Unitary Weingarten formula}

In the sequel, if $x= (x_1, \ldots, x_k)$ and $y = (y_1, \ldots, y_k)$ are multi-indices in $\INT{n}^k$, for $\sigma\in S_k$ we set 
$$\delta_{\sigma}(x,y) = \prod_{l=1}^k \delta_{x_l,y_{\sigma (l)}},$$ 
where $\delta_{i,j}$ is the usual Kronecker delta ($1$ if $i=j$ and $0$ otherwise).
Similarly, if $k$ is even and $p \in P_k$ is a pair partition, 
$\delta_{p}(x)$ is a generalized Kronecker delta: namely, $\delta_{p}(x)$ takes the value $1$ if the map $i\mapsto x_i$ is constant on each block of $p$ and zero in all other cases.  This is a product of $k/2$ 
such deltas. 
 
We start by recalling the analogue of Wick calculus for the normalized Haar measure on the unitary group $\Un$.
We refer, for example, to \cite{MR1959915} for early versions of this result.

 \begin{theorem}\label{th:unitary-Wg}
Let $k,n \geq 1$ be integers and $x,y,x',y'$ in $\INT{n}^k$. There exists a function $\WgU (\cdot, \cdot ,  n ) : S_k\times S_k\to \mathbb{R}$ such that if $U = (U_{ij})$ is Haar-distributed on $\Un$,
$$\EE \PAR{ \prod_{l=1}^k U_{x_l y_l}\overline U_{x_l'y_l'}} =
\sum_{p,q\in S_k}\delta_p(x,x')\delta_q(y,y')\Wg(p,q,n).$$
This function is uniquely defined iff $k\le n$; see Theorem
\ref{lemma:unitary-Wg-expansion1} for a formula.
\end{theorem}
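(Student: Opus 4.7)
The plan is to identify the operator-valued expectation $M_k := \EE[U^{\otimes k} \otimes \bar U^{\otimes k}] \in \mathrm{End}((\dC^n)^{\otimes 2k})$ in closed form and then read off the desired scalar identity by taking matrix entries. First I would exploit bi-invariance of Haar measure: applying $U \mapsto VU$ and $U \mapsto UV$ for arbitrary $V \in \Un$ gives
$$(V^{\otimes k} \otimes \bar V^{\otimes k}) M_k \;=\; M_k \;=\; M_k (V^{\otimes k} \otimes \bar V^{\otimes k}),$$
while invariance under $U \mapsto U^*$ shows $M_k$ is self-adjoint. Writing $M_k$ as the expectation of two independent averages yields $M_k^2 = M_k$, so $M_k$ is the orthogonal projector onto the subspace $\cI_k \subset (\dC^n)^{\otimes 2k}$ of vectors fixed by the diagonal representation $V \mapsto V^{\otimes k} \otimes \bar V^{\otimes k}$.

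The core step is the description of $\cI_k$. Via the natural isomorphism $(\dC^n)^{\otimes 2k} \cong \mathrm{End}((\dC^n)^{\otimes k})$ obtained by pairing the last $k$ factors with their duals, $\cI_k$ corresponds to the commutant of the $\Un$-action $V \mapsto V^{\otimes k}$ on $(\dC^n)^{\otimes k}$. Classical Schur--Weyl duality identifies this commutant as the image of the group algebra $\dC[S_k]$ acting by permutation of tensor factors. Transporting back to $(\dC^n)^{\otimes 2k}$, the resulting spanning family of $\cI_k$ is
$$T_\sigma \;=\; \sum_{i \in \INT{n}^k} e_{i_1} \otimes \cdots \otimes e_{i_k} \otimes e_{i_{\sigma(1)}} \otimes \cdots \otimes e_{i_{\sigma(k)}}, \qquad \sigma \in S_k,$$
and a standard Gram matrix computation shows that $\{T_\sigma\}_{\sigma \in S_k}$ is linearly independent (equivalently, forms a basis of $\cI_k$) if and only if $n \geq k$; this is the source of the uniqueness dichotomy in the theorem.

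To conclude, since $M_k$ is the projector onto $\cI_k$, both its range and its coimage lie in $\cI_k$, so it admits an expansion $M_k = \sum_{\sigma, \tau \in S_k} \Wg(\sigma, \tau, n) \, |T_\sigma\rangle \langle T_\tau|$ for scalars $\Wg(\sigma, \tau, n)$, unique precisely when $k \leq n$. A direct inner product computation yields $\langle e_x \otimes e_{x'}, T_\sigma\rangle = \prod_l \delta_{x_l, x'_{\sigma^{-1}(l)}}$, which equals $\delta_\sigma(x, x')$ after absorbing the involution $\sigma \mapsto \sigma^{-1}$ into the naming of $\Wg$ (harmless since the sum runs over all of $S_k$); analogously for $T_\tau$ against $e_y \otimes e_{y'}$. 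Extracting the $(e_x \otimes e_{x'}, e_y \otimes e_{y'})$ matrix entry of both sides of the projector identity then produces exactly the stated formula.

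The only genuinely nontrivial input is the Schur--Weyl description of the commutant together with the sharp linear-independence threshold at $n = k$; everything else is tensor-algebra bookkeeping, with the one delicate point being the tracking of permutation conventions between the indexing of $T_\sigma$ and the paper's definition of $\delta_\sigma$.
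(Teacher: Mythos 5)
Your proof is correct, and it is the standard invariant-theoretic derivation of the unitary Weingarten formula (Haar bi-invariance $\Rightarrow$ $M_k$ is the orthogonal projector onto the fixed space, Schur--Weyl to identify that space with $\SPAN\{T_\sigma\}$, then expand the projector in the rank-one operators $|T_\sigma\rangle\langle T_\tau|$). The paper does not reprove this theorem -- it cites the literature -- and your argument matches the proof given there; the two delicate points you flag are handled correctly: the $\sigma\mapsto\sigma^{-1}$ reindexing is harmless, and uniqueness of the coefficients for $k\le n$ does follow because $\{|T_\sigma\rangle\langle T_\tau|\}$ is linearly independent exactly when $\{T_\sigma\}$ is, i.e.\ when the Gram matrix $\langle T_\sigma,T_\tau\rangle=n^{\ell(\sigma^{-1}\tau)}$ is nonsingular.
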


The sequel of this subsection is devoted to providing a modern description of $\WgU$.
It follows from commutativity that 
$\WgU(p,q,n)$ actually only depends on the conjugacy class of $pq^{-1}$ so in the unitary case we will 
write $\WgU(pq^{-1},n)=\WgU(p,q,n)$, so
$\WgU$ becomes a central function on $S_k$. 

Let $\sigma\in S_k$. We denote by $|\sigma| = k - \ell(\sigma)$ where $\ell(\sigma)$ is the number of disjoint cycles in the cycle 
decomposition of $\sigma$. Classically, $|\sigma|$ is also the minimal number $m$ such that $\sigma$ can be written as a product 
of $m$ transpositions. In particular $(-1)^{|\sigma|}$ is the signature of $\sigma$. 
For $l \geq 0$, we call $P(\sigma,l)$ the collection of solutions of 
\begin{equation}\label{hurwitz}
\sigma= (i_1,j_1)\cdots (i_{|\sigma|+l}, j_{|\sigma|+l}),
\end{equation}
where $j_p\le j_{p+1}$ and $i_p<j_p$. Note that $P(\sigma,l) = 0$ unless $l = 2g$ is even (since composing by a single 
transposition negates the signature).

The following theorem is a combination of Theorem 2.7 and Lemma 2.8 in \cite{MR3680193}  (beware that the definition of the map 
$l \to P(\sigma,l)$ is shifted by $|\sigma|$ in this reference), see also \cite{MR3010693}.  In the statement below and in the sequel, if $S$ 
is a finite set, we denote by $|S|$ its cardinal number (not to be confused with $|\sigma|$ for $\sigma \in S_k$). 
\begin{theorem} \label{lemma:unitary-Wg-expansion1}
For $\sigma \in S_k$, we have the  
expansion
\begin{equation} \label{eq:unitary-Wg-expansion1}
\WgU(\sigma ,n)= (-1)^{|\sigma|} n^{-k - |\sigma|}  \sum_{g \ge 0} | P(\sigma, 2g) | n^{-2g}.
\end{equation}
This expansion is formal in the sense that $\Wg(\sigma ,n)$ is a rational fraction in $n$, and its
power series expansion in the neighborhood of infinity is as above. 
\end{theorem}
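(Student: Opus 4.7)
The plan is to view the Weingarten function as the element $\Wg_n := \sum_{\sigma \in S_k}\Wg(\sigma,n)\,\sigma$ of $\dC[S_k]$ and identify it as the inverse of a much simpler group algebra element, which can then be factored via Jucys--Murphy elements.

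By Schur--Weyl duality, $\dE(U^{\otimes k}\otimes \bar U^{\otimes k})$ projects onto the commutant of the diagonal $GL_n$-action on $(\dC^n)^{\otimes k}$, and this commutant is spanned by the $S_k$-action. Reading off the matrix entries of this projector gives the defining Weingarten relation, and an easy calculation shows that $\Wg_n$ is the (convolution) inverse in $\dC[S_k]$, valid for $k\le n$, of
\begin{equation*}
\Pi_n \;:=\; \sum_{\tau\in S_k} n^{k-|\tau|}\,\tau.
\end{equation*}
Thus the entire task reduces to expanding $\Pi_n^{-1}$.

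The key algebraic input is the classical factorization
\begin{equation*}
\Pi_n \;=\; \prod_{i=1}^k (n+J_i), \qquad J_i \;=\; \sum_{j<i}(j\,i),
\end{equation*}
where the Jucys--Murphy elements $J_1,\ldots,J_k$ commute pairwise in $\dC[S_k]$. The slickest justification tests both sides on each central idempotent: $\prod_i(n+J_i)$ acts on the $\lambda$-isotypic component by $\prod_{\square\in\lambda}(n+c(\square))$, and the hook-content formula identifies this with the scalar by which $\Pi_n$ acts (computable from Frobenius's character formula). Granting this, I invert termwise as a formal power series in $1/n$:
\begin{equation*}
\Wg_n \;=\; n^{-k}\sum_{m_1,\ldots,m_k\ge 0}(-1)^{m_1+\cdots+m_k}\,n^{-(m_1+\cdots+m_k)}\,J_1^{m_1}\cdots J_k^{m_k}.
\end{equation*}

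To read off $\Wg(\sigma,n)$, I expand each $J_i^{m_i}$ as an ordered sum over $m_i$-tuples of transpositions whose larger element equals $i$. Multiplying the factors in the given order $i=1,2,\ldots,k$ produces exactly a sequence of transpositions $(i_1,j_1),\ldots,(i_N,j_N)$ with $j_1\le\cdots\le j_N$ and $i_p<j_p$ and $N=m_1+\cdots+m_k$, i.e.\ precisely the data counted by $P(\sigma,N-|\sigma|)$. Summing over all compositions $(m_i)$ with fixed total $N$ collapses the coefficient of $\sigma$ to $|P(\sigma,N-|\sigma|)|$; writing $N=|\sigma|+l$ gives
\begin{equation*}
\Wg(\sigma,n) \;=\; (-1)^{|\sigma|}\,n^{-k-|\sigma|}\sum_{l\ge 0}(-1)^l\,n^{-l}\,|P(\sigma,l)|.
\end{equation*}

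A parity check finishes the proof: a product of $|\sigma|+l$ transpositions has sign $(-1)^{|\sigma|+l}$, which must agree with $\mathrm{sgn}(\sigma)=(-1)^{|\sigma|}$, so $|P(\sigma,l)|=0$ unless $l=2g$ is even; the alternating signs then disappear and the formula in the statement drops out. The main obstacle is the Jucys--Murphy identity $\Pi_n=\prod_i(n+J_i)$; everything else is a formal expansion plus a parity observation. Since this identity is classical and is exactly the backbone of the cited \cite{MR3680193,MR3010693}, I would invoke it rather than reprove it from scratch.
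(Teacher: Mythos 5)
Your argument is correct and is essentially the proof the paper relies on: Theorem \ref{lemma:unitary-Wg-expansion1} is quoted from Theorem 2.7 and Lemma 2.8 of \cite{MR3680193}, whose derivation is exactly the identification of $\Wg_n$ with $\Pi_n^{-1}$, Jucys' factorization $\Pi_n=\prod_i(n+J_i)$, geometric-series inversion, and the reading of $J_1^{m_1}\cdots J_k^{m_k}$ as monotone factorizations counted by $|P(\sigma,l)|$, with the parity observation killing odd $l$. Nothing is missing, and invoking the Jucys identity rather than reproving it is consistent with the level of detail the paper itself adopts.
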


Since the poles of $n\to \WgU (\sigma,n)$ are known to be in the set $\{-k+1,\ldots , k-1\}$, it follows
that the power series expansion is convergent as soon as $n\ge k$.
The following result can be found in \cite[Theorem 3.1]{MR3680193}; it is an estimate of the number of solutions of \eqref{hurwitz}. 
It allows subsequently to give estimates on the Weingarten function. 

\begin{proposition}\label{combinatorial-estimate}
Let $k$ be a positive integer. 
For any permutation $\sigma \in S_k$ and 
integer $g \geq 0$, we have
$$
 (k-1)^g | P (\sigma , 0) | \leq 
|P(\sigma ,  2g)|\leq 
\PAR{ 6 k^{7/2}}^g |P (\sigma , 0)|.
$$
\end{proposition}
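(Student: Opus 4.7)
The plan is to prove both inequalities by induction on $g$, the base case $g=0$ being the trivial equality $|P(\sigma,0)| = |P(\sigma,0)|$. The induction step for each direction relies on a correspondence between $P(\sigma, 2(g-1))$ and $P(\sigma, 2g)$ obtained by inserting, respectively removing, a pair of transpositions whose product is the identity. The lower bound reflects that many such pairs can be inserted into a shorter factorization; the upper bound reflects that the multiplicity of the reverse reduction is controlled.

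For the lower bound, given a monotone factorization $(\tau_1,\ldots,\tau_m) \in P(\sigma, 2(g-1))$ with $\tau_p = (i_p, j_p)$ and $b \in \{2,\ldots,k\}$, I insert the trivial pair of transpositions $(b-1,b), (b-1,b)$ at the unique position compatible with monotonicity, namely just after the largest index $p$ with $j_p \leq b$. The inserted pair composes to the identity, so the product remains $\sigma$, and the sequence of $j$-values is still non-decreasing, placing the result in $P(\sigma, 2g)$. To obtain the factor $(k-1)^g$ under iteration, one must ensure the map is injective when applied repeatedly; this is achieved by always inserting at the \emph{right-most} compatible slot, so that later insertions never disturb earlier ones, and by defining an inverse that peels off the right-most consecutive pair of equal transpositions whose deletion preserves monotonicity. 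Verifying that this inverse strips insertions in the correct reverse order gives the bound $|P(\sigma, 2g)| \geq (k-1)^g |P(\sigma,0)|$.

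For the upper bound, the strategy is dual: define a (multi-valued) reduction $R : P(\sigma, 2g) \to P(\sigma, 2(g-1))$ that removes a canonically identifiable pair of transpositions from any non-minimal monotone factorization, then show each fiber $R^{-1}(\tau')$ has size at most $6\,k^{7/2}$. The fiber count decomposes into choices contributing to a re-insertion: positions in the target factorization (at most $|\sigma| + 2g -1 = O(k^2)$), transpositions $(a,b)$ with $1 \leq a < b \leq k$ (of order $k^2$), and a refinement reflecting monotonicity-induced admissibility, whose sharp accounting produces the stated constant. Iterating the step-wise inequality $|P(\sigma, 2g)| \leq 6k^{7/2} |P(\sigma, 2(g-1))|$ over $g$ concludes the proof.

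The main obstacle is the upper bound. Designing the reduction canonically so that \emph{every} non-minimal monotone factorization admits at least one simplification is already nontrivial, because monotone factorizations do not in general contain adjacent equal transpositions. One needs a more flexible local rewriting---detecting configurations of transpositions that, via a bounded number of monotone moves, can be brought to a form with a visible cancellable pair---and then controlling that only $O(k^{7/2})$ factorizations of length $|\sigma| + 2g$ can collapse to a given shorter factorization. The explicit constant $6$ and the exponent $7/2$ would come out of this case analysis; any looseness there directly inflates the final constant appearing in Theorem \ref{lemma:unitary-Wg-expansion1} and downstream Weingarten estimates. An alternative, more algebraic route would express $|P(\sigma,2g)|$ via characters of $S_k$ and Jucys--Murphy eigenvalues and derive the bound from estimates on power sums of contents of Young diagrams, but the combinatorial argument above is the more direct.
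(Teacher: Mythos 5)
The paper does not actually prove this proposition: it is imported verbatim from \cite[Theorem 3.1]{MR3680193}, so there is no internal argument to compare yours against, and your proposal has to stand on its own. It does not. The decisive issue is the upper bound, which is the part the paper really uses (it drives Corollary \ref{cor-upper-bound-unitary}, Theorem \ref{thm:centered-wg-estimate} and everything downstream). Your text describes the \emph{shape} of a proof --- a reduction $R:P(\sigma,2g)\to P(\sigma,2(g-1))$ defined on every non-minimal monotone factorization, with fibers of size at most $6k^{7/2}$ --- and then you yourself name the two points on which everything hinges: monotone factorizations need not contain an adjacent cancellable pair, and the fiber count must be uniform in $g$ even though the factorization has length $|\sigma|+2g$, which is not bounded in terms of $k$ alone (so ``at most $|\sigma|+2g-1=O(k^2)$ positions'' is already false as stated). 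Neither point is resolved; saying that the constant $6$ and the exponent $7/2$ ``would come out of this case analysis'' is not a substitute for the case analysis. As written, the argument establishes nothing beyond the trivial $g=0$ case, so the gap is essential rather than cosmetic.

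The lower bound, though only a side remark in the paper, is also broken as you describe it. Take $\sigma=e$ in $S_3$, so $P(\sigma,0)$ is the single empty factorization, and $g=2$. The insertion sequence $(b_1,b_2)=(2,3)$ yields $(1,2)(1,2)(2,3)(2,3)$; so does $(b_1,b_2)=(3,2)$, because after inserting $(2,3)(2,3)$ there is no index $p$ with $j_p\le 2$ and the pair $(1,2)(1,2)$ goes to the front. Peeling off right-most cancellable pairs recovers only one canonical ordering of the multiset $\{b_1,\dots,b_g\}$, so your map from $P(\sigma,0)\times\{2,\dots,k\}^g$ collapses orderings and cannot produce the factor $(k-1)^g$. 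The fix is to abandon interior insertion: append the pair $(a,k)(a,k)$ with $a\in\INT{k-1}$ at the very end. Monotonicity is automatic since $j=k$ is maximal, the product is unchanged, and the $g$ appended pairs sit in fixed positions $|\sigma|+1,\dots,|\sigma|+2g$, so the final sequence determines both the original factorization and the ordered tuple $(a_1,\dots,a_g)$; this gives the injection and hence the bound $(k-1)^g|P(\sigma,0)|\le|P(\sigma,2g)|$.
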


For our purposes, we will need the following corollary: 

\begin{corollary}\label{cor-upper-bound-unitary}
If $\sigma \in S_k$ and $12 k^{7/2}  \leq n^{2}$, 
$$
| \WgU(\sigma ,n) | \leq \PAR{ 1 + 24k^{7/2} n^{-2} } n^{-k - |\sigma|} 4^{|\sigma|}. 
$$
\end{corollary}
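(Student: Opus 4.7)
The plan is to combine Theorem \ref{lemma:unitary-Wg-expansion1} with the combinatorial estimate of Proposition \ref{combinatorial-estimate} and then bound the resulting geometric series using the hypothesis $12k^{7/2}\le n^2$.

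First, from the expansion \eqref{eq:unitary-Wg-expansion1} and the triangle inequality, I would write
\[
|\WgU(\sigma,n)| \;\le\; n^{-k-|\sigma|}\sum_{g\ge 0} |P(\sigma,2g)|\,n^{-2g}.
\]
Applying the upper bound $|P(\sigma,2g)|\le (6k^{7/2})^g |P(\sigma,0)|$ from Proposition \ref{combinatorial-estimate}, this becomes
\[
|\WgU(\sigma,n)| \;\le\; n^{-k-|\sigma|}\,|P(\sigma,0)|\sum_{g\ge 0}\PAR{\tfrac{6k^{7/2}}{n^2}}^g .
\]

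Second, I would use the hypothesis $12 k^{7/2}\le n^2$, which says exactly that $x:=6k^{7/2}/n^2\le 1/2$. For such $x$, the geometric series satisfies $(1-x)^{-1}=1+x/(1-x)\le 1+2x$, so the sum is bounded by $1+12k^{7/2}n^{-2}\le 1+24k^{7/2}n^{-2}$, giving
\[
|\WgU(\sigma,n)| \;\le\; \PAR{1+24k^{7/2}n^{-2}}\,n^{-k-|\sigma|}\,|P(\sigma,0)|.
\]

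The remaining task, and the only real content in the argument, is the bound $|P(\sigma,0)|\le 4^{|\sigma|}$ on the number of minimal (genus zero) factorizations of $\sigma$ with the ordering conventions $i_p<j_p$ and $j_p\le j_{p+1}$. I would reduce this to the cycle-by-cycle estimate: since the supports of distinct cycles of $\sigma$ are disjoint, the global order on the $j_p$ decomposes into the separate orders within each cycle, so $|P(\sigma,0)|=\prod_c |P(c,0)|$ where the product runs over the cycles $c$ of $\sigma$. For a single cycle $c$ of length $m$ (with $|c|=m-1$), the counting of minimal ordered factorizations into transpositions is classically given by the Catalan number $C_{m-1}$, and one has $C_{m-1}\le 4^{m-1}=4^{|c|}$. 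Multiplying over cycles yields $|P(\sigma,0)|\le \prod_c 4^{|c|}=4^{|\sigma|}$, which completes the proof.

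The only step with any delicacy is this last one: verifying the clean multiplicative structure of $|P(\sigma,0)|$ under the prescribed orderings, and identifying the per-cycle count with $C_{m-1}$. Once this is in hand, the corollary is an immediate consequence of the combinatorial inequality in Proposition \ref{combinatorial-estimate} and a one-line geometric series estimate.
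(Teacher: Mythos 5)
Your proposal is correct and follows essentially the same route as the paper: expand via Theorem \ref{lemma:unitary-Wg-expansion1}, bound $|P(\sigma,2g)|$ by Proposition \ref{combinatorial-estimate}, sum the geometric series under $6k^{7/2}/n^2\le 1/2$, and control $|P(\sigma,0)|$ by $\prod_c C_{|c|}\le 4^{|\sigma|}$. The only cosmetic difference is that the paper simply cites \cite[Corollary 2.11]{MR3010693} for the product-of-Catalan-numbers formula that you sketch by hand.
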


\begin{proof}
By \cite[Corollary 2.11]{MR3010693}, we have $$| P(\sigma , 0)| = \prod_{i=1}^{\ell (\sigma)} C_{\mu_i - 1}, $$
 where $C_n$ is the $n$-th Catalan number and $\mu_i$ is the length of the $i$-th cycle of $\sigma$. Since $C_n \leq 4^n$, we get  
 $| P(\sigma , 0)| \leq 4^{|\sigma|}$. From  Proposition \ref{combinatorial-estimate}, we deduce that  for any integer $g \geq 0$,
\begin{equation}\label{eq:upper-bound-unitary}
 | P(\sigma , 2g)| \leq  4^{|\sigma |}\PAR{ 6k^{7/2}}^{g}.
 \end{equation}
From Theorem \ref{lemma:unitary-Wg-expansion1}, we deduce that 
$$
| \WgU(\sigma ,n) |  \leq  n^{-k - |\sigma|} 4^{|\sigma|} \sum_{g\geq 0} \PAR{ \frac{6 k^{7/2}}{n^2} }^g .
$$
The conclusion follows by using that $(1-x)^{-1} \leq 1 + 4 x$ for all $0 \leq x \leq 1/2$.
\end{proof}

\subsubsection{The centered case}

For a symbol $\veps \in \{ \cdot,-\}$ and $z \in \dC$, we take the notation that $z^{\varepsilon}=z$ if $\varepsilon=\cdot$ and 
$z^{\varepsilon}=\overline z$ if
$\varepsilon = -$.
Our goal is to compute, for $U = (U_{ij})$ Haar-distributed on $\Un$, expressions of the form
\begin{equation}\label{eq:prodbraex}
\EE \PAR{\prod_{t=1}^T \big[  \prod_{l = 1}^{k_t} U_{x_{tl} y_{tl }}^{\varepsilon_{tl}} \big]},
\end{equation}
where we have used the bracket defined in \eqref{eq:defbracket} --
and then to estimate it in a useful way. 

The polynomial to be integrated can be expanded into $2^T$ terms for which the Weingarten formula can be applied 
each time. However, such an approach does not yield good estimates because the sum is signed, 
and additional cancellations occur, which results in the 
items to be summed not having the correct decay in the large dimension. On the other hand, 
for a given pairing of indices, one can group the $2^T$ (signed) Weingarten functions into one single more general Weingarten function whose expansion turns out to be non-signed and, therefore, much more suitable for 
more sophisticated asymptotics.
The following proposition addresses this issue. 

\begin{proposition}\label{weingarten-centered}
Let $k$ be even, $\pi = (\pi_t)_{t \in \INT{T}}$ be a partition of $\INT{k}$, let $\veps \in \{\cdot,-\}^k$ be a balanced sequence (in the sense that it has as many $\cdot$ than $-$) and $x, y$ in $\INT{n}^k$.  
There exists a generalized Weingarten function
$\WgU[\pi] (p,q,n)$ such that
\begin{equation*}\label{eq:Wgcentered}
\EE \PAR{\prod_{t=1}^T \big[  \prod_{i \in \pi_t} U_{x_{i} y_{i}}^{\varepsilon_{i}} \big]}
=\sum_{p,q\in P^\veps_k}\delta_{p}(x)\delta_{q}(y)\WgU[\pi] (p,q,n),
\end{equation*}
where $P_k^\veps \subset P_k$ are the pair partitions that match an $\veps_i =\cdot$ with an $\veps_{i'} = -$  
(seen as bijections from the set of $i$'s such that $\veps_i = \cdot$ to the set of $i$'s such that $\veps_i = -$). 
\end{proposition}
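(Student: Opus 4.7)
The plan is to expand the brackets by inclusion--exclusion, apply the classical unitary Weingarten formula (Theorem \ref{th:unitary-Wg}) to each resulting ordinary expectation, and then regroup the terms by a global pair partition of $\INT{k}$, reading off $\WgU[\pi]$ as the remaining signed sum over subsets $S \subseteq \INT{T}$.

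Concretely, setting $Y_t = \prod_{i \in \pi_t} U^{\veps_i}_{x_i y_i}$, expanding $\prod_t (Y_t - \EE Y_t)$ yields
$$\EE \prod_{t=1}^T [Y_t] = \sum_{S \subseteq \INT{T}} (-1)^{T-|S|}\, \EE\Big(\prod_{t \in S} Y_t\Big) \prod_{t \notin S} \EE(Y_t).$$
By the $U \mapsto e^{i\theta}U$ invariance of Haar measure, any term involving a non-balanced product of $U$/$\bar U$ entries vanishes identically, so only balanced factors contribute. For each such factor, Theorem \ref{th:unitary-Wg} produces a sum over pair partitions of the corresponding index set with weights $\delta_p(x)\delta_q(y)\WgU(pq^{-1},n)$.

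Next, for fixed $S$, a pair partition of $\bigcup_{t \in S} \pi_t$ together with separate pair partitions of each $\pi_t$ for $t \notin S$ assemble canonically into a single element of $P_k^\veps$; conversely, exactly the $p \in P_k^\veps$ whose pairs each lie inside $\bigcup_{t \in S}\pi_t$ or inside a single block $\pi_t$ with $t \notin S$ arise this way. Call this compatibility property $\mathrm{Comp}(p;S,\pi)$. Swapping the orders of summation then yields
$$\EE \prod_{t=1}^T [Y_t] = \sum_{p,q \in P_k^\veps} \delta_p(x)\delta_q(y)\, \WgU[\pi](p,q,n),$$
with
$$\WgU[\pi](p,q,n) := \sum_{\substack{S \subseteq \INT{T} \\ \mathrm{Comp}(p;S,\pi),\ \mathrm{Comp}(q;S,\pi)}} (-1)^{T-|S|}\, \WgU\big(p_S\, q_S^{-1}, n\big) \prod_{t \notin S} \WgU\big(p_{\pi_t}\, q_{\pi_t}^{-1}, n\big),$$
where $p_S$ (resp.\ $p_{\pi_t}$) denotes the restriction of $p$ to $\bigcup_{t\in S}\pi_t$ (resp.\ to $\pi_t$), viewed as a permutation via the $\veps$-matching.

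The one subtle step is to verify that the Kronecker deltas $\delta_p(x)\delta_q(y)$ factorize correctly across the induced partition of indices: the compatibility predicate is precisely what is needed so that the local deltas produced by each Weingarten application glue to the global $\delta_p(x)\delta_q(y)$. This is the main (and only) obstacle, and it is purely combinatorial bookkeeping — no analytic difficulty arises, and the balancedness of $\veps$ guarantees that $P_k^\veps$ is non-empty and consistent with the vanishing of non-balanced sub-expectations.
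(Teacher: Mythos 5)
Your proof is correct and follows essentially the same route as the paper's: inclusion--exclusion over subsets of $\INT{T}$, the Weingarten formula applied to each resulting factor, and reassembly of the local pair partitions into a global one, with your compatibility predicate $\mathrm{Comp}(p;S,\pi)$ coinciding exactly with the paper's condition that $p$ and $q$ respect the merged partition $\pi_S$. No gaps.
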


\begin{proof}
Let $A\subset \INT{T}$ and $X_1,\ldots ,X_T$ be random variables. 
We introduce the following temporary notation:
$$\EE_A(X_1,\ldots , X_T)=\EE\PAR{\prod_{t \in A}X_t}\prod_{t \notin A}\EE(X_t).$$
It follows from this definition that
\begin{equation}\label{eq:prodbracket}
\EE([X_1]\ldots [X_T])=\sum_{A\subset \INT{T}}\EE_A(X_1,\ldots , X_T)(-1)^{T - |A|}.
\end{equation}
We apply this equation to
$$X_t:=\prod_{i \in \pi_t} U_{x_{i} y_{i}}^{\varepsilon_{i}}.$$
We claim that for a given $A$, there exists a function $\WgU_A[\pi] (p,q,n)$ such that
$$\EE_A ( X_1,  \ldots, X_T )
=\sum_{p,q\in P_k^\veps}\delta_{p}(x)\delta_{q}(y)\WgU_A[\pi] (p,q,n).$$

To define $\WgU_{A}$ precisely, we call $\pi_A$ the partition of 
$\INT{k}$ whose blocks are  $\pi_{t},  t\notin A$,
and a last block that complements the previous ones (in other words, we merge all blocks in $A$ and leave the other unchanged). 
From Theorem \ref{th:unitary-Wg}, this yields the following description of $\WgU_{A}$:
if either the permutation $p$ or the permutation $q$ fails to respect the partition 
$\pi_A$ (that is, does not leave all blocks invariant), then its value
is zero. Or else, it is a product of Weingarten functions obtained by restricting the permutations over the block of $\pi_A$: 
\begin{equation}\label{eq:WgA}
\WgU_A[\pi] (p,q,n) = \prod_{b \in \pi_A} \WgU (p_b,q_b,n),
\end{equation} where the product is over all blocks $b$ of $\pi_A$ and $p_b,q_b$ are the restrictions of $p,q$ to the block $b$.
The fact that the above holds follows from an application of the Weingarten formula, Theorem \ref{th:unitary-Wg}, for
each expectation factor appearing in the product. 

In turn, the explicit formula we obtain for the generalized Weingarten formula becomes
\begin{equation}
\label{eq:WgU}\WgU[\pi] (p,q,n)= \sum_{ A\subset \INT{T}}(-1)^{T - |A|}\WgU_{A}[\pi] (p,q,n),
\end{equation}
and this concludes the proof.
\end{proof}

\begin{example}\normalfont 
We take $T = 2$ and for $k_1 \in \INT{k}$, $\pi_1 = \INT{k_1}$, $\pi_2 = \INT{k} \backslash \INT{k_1}$. Then, if $p$ or $q$ do not 
leave invariant $\pi$, we have $\WgU[\pi] (p,q ,n)=\WgU(p,q,n)$. Otherwise, we get 
$\WgU[\pi] (p,q,n)=\WgU(p,q,n)-\WgU(p_{\pi_1},q_{\pi_1},n)\WgU(p_{\pi_2},q_{\pi_2},n)$, where $p_{\pi_t}$ is the restriction of 
$p$ to the block $\pi_t$. \end{example}

We now give an analogue of Theorem \ref{lemma:unitary-Wg-expansion1}. We consider the setting of Proposition \ref{weingarten-centered}. 
Let $\veps \in \{\cdot,-\}^k$ be a balanced sequence and  $\pi$ be a partition of $\INT{k}$. Recall that we identify pair partitions 
$p,q \in P_k^\veps$ with bijections from the $i$'s such that $\veps_i = \cdot$ to the set of $i$'s such that $\veps_i = -$. For 
$p,q \in P^\veps_k$, we call 
$P[\pi] (p,q, l)$ the collection of solutions of
\begin{equation}\label{hurwitz-centered}
p= (i_1,j_1)\ldots (i_{|p q^{-1}|+l}, j_{|p q^{-1}| + l})q
\end{equation}
where $j_r\le j_{r+1}$, $i_r<j_r$ 
and that satisfies the following property: 
the solution can be restricted to \emph{no single block} of $ \pi$ in the  
sense 
that
if there exists a block $b$  of $  \pi$ such that $p,q$ and each transposition $(i_r,j_r)$ leave $b$ invariant, 
then the solution is not in $P[\pi] (p ,q , l)$, otherwise, the solution is in $P[\pi] (p ,q , l)$.

These notations allow us to reformulate combinatorially the centered Weingarten function used in
Proposition \ref{weingarten-centered}:

\begin{theorem} \label{lemma:unitary-Wg-expansion3} Let $k$ be  even, $\pi = (\pi_t)_{t \in \INT{T}}$ be a partition of $\INT{k}$, let $\veps \in \{\cdot,-\}^k$ be a balanced sequence. For all $p, q \in P^\veps_k$, we have the   expansion
\begin{equation} \label{eq:unitary-Wg-expansion3}
\WgU [\pi](p,q ,n)=(-1)^{|pq^{-1} |} n^{-k/2-|pq^{-1} | }  \sum_{g \ge 0}  | P[\pi](p,q, 2g)| n^{-2g}. 
\end{equation}
\end{theorem}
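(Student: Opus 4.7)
The plan is to combine the inclusion–exclusion formula \eqref{eq:WgU} for $\WgU[\pi]$ with the expansion of the ordinary Weingarten function supplied by Theorem \ref{lemma:unitary-Wg-expansion1}, and then to reinterpret the resulting signed sum combinatorially as a count of factorizations.

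I would start from \eqref{eq:WgU}, namely $\WgU[\pi](p,q,n) = \sum_{A \subseteq \INT{T}} (-1)^{T-|A|} \WgU_A[\pi](p,q,n)$, and expand each nonzero factor $\WgU_A[\pi](p,q,n) = \prod_{b \in \pi_A} \WgU(p_b q_b^{-1}, n)$ via Theorem \ref{lemma:unitary-Wg-expansion1}. Since $\sum_{b \in \pi_A} k_b = k$ and $\sum_{b \in \pi_A} |p_b q_b^{-1}| = |pq^{-1}|$ whenever $p, q$ respect $\pi_A$, the prefactor $(-1)^{|pq^{-1}|} n^{-k/2 - |pq^{-1}|}$ factors out, and the remaining sum can be reorganised as $\sum_{G \ge 0} |F_A(p,q,2G)|\, n^{-2G}$, where $F_A(p,q,2G)$ is the set of factorizations $p = (i_1,j_1) \cdots (i_m, j_m)\,q$ with $m = |pq^{-1}| + 2G$, subject to the ordering $j_r \le j_{r+1}$, such that $p$ and $q$ leave each block of $\pi_A$ invariant and each transposition lies inside a single block of $\pi_A$. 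A minor point to verify is that assembling block-wise factorizations by re-sorting the union of transpositions is a bijection onto $F_A(p,q,2G)$; this is legitimate because transpositions sitting in different blocks of $\pi_A$ are disjoint and therefore commute, so the global ordering has a unique realisation.

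Interchanging the order of summation, the theorem reduces to the combinatorial identity
\[
\sum_{A \subseteq \INT{T}} (-1)^{T-|A|} |F_A(p,q,2G)| = |P[\pi](p,q,2G)|.
\]
I would establish it by a fibered inclusion–exclusion: for a factorization $f$, let $\Pi(f)$ be the partition of $\INT{k}$ generated by the pairs of $p$, the pairs of $q$ and the transpositions of $f$, and let $\mathcal{T}(f)$ be the partition of $\INT{T}$ in which $t \sim t'$ whenever some block of $\Pi(f)$ meets both $\pi_t$ and $\pi_{t'}$. A direct check shows that $f \in F_A(p,q,2G)$ is equivalent to $\Pi(f)$ refining $\pi_A$, and this in turn holds iff every non-singleton block of $\mathcal{T}(f)$ is contained in $A$, the singleton blocks being free. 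Summing $(-1)^{T-|A|}$ over such $A$ therefore factorizes through independent binary choices at each singleton of $\mathcal{T}(f)$ and collapses to $(1-1)^{s(f)}$, which vanishes unless $s(f) = 0$, i.e.\ unless $\mathcal{T}(f)$ has no singleton block.

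The proof then concludes by observing that $\{t\}$ is a singleton block of $\mathcal{T}(f)$ if and only if $\pi_t$ is $\Pi(f)$-saturated, i.e.\ if and only if $p$, $q$ and every transposition of $f$ leave $\pi_t$ invariant; this is exactly the negation of the defining condition of $P[\pi](p,q,2G)$. Hence only factorizations in $P[\pi](p,q,2G)$ survive the signed sum, each contributing $1$, and \eqref{eq:unitary-Wg-expansion3} follows. The main obstacle is bookkeeping among the three partitions $\pi$, $\pi_A$ and $\Pi(f)$; once the equivalence ``$\pi_t$ is $\Pi(f)$-saturated iff $\{t\}$ is isolated in $\mathcal{T}(f)$'' is firmly in place, the rest is a textbook inclusion–exclusion collapse.
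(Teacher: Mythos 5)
Your proposal is correct and follows essentially the same route as the paper: expand $\WgU[\pi]$ via the signed sum \eqref{eq:WgU} over $A\subseteq\INT{T}$, apply Theorem \ref{lemma:unitary-Wg-expansion1} blockwise to each $\WgU_A[\pi]$ (your $F_A(p,q,2G)$ is exactly the paper's $P_A[\pi](p,q,2G)$, and your remark that transpositions in distinct blocks of $\pi_A$ are disjoint justifies the same merging bijection the paper invokes), and then collapse the signed sum by inclusion--exclusion. The only cosmetic difference is that you fiber the sieve over individual factorizations and compute $(1-1)^{s(f)}$ directly, whereas the paper applies the set-theoretic inclusion--exclusion formula to the sets $P_{\INT{T}\setminus\{t\}}[\pi]$ using the intersection property $P_A\cap P_B=P_{A\cap B}$; these are two equivalent phrasings of the same argument.
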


\begin{proof}
We set $\sigma = p q^{-1}$, it is a bijection on the set $\dot{\INT{k}}$ of elements $\INT{k}$ such that $\veps_i = \cdot$. 
For $A \subset \INT{T}$,  we start with the formula for $\Wg_A[\pi]$ in \eqref{eq:WgA}, where we recall that $\pi_A$ is the partition 
obtained for $\pi = (\pi_t)$ by merging all blocks in $A$. We apply Theorem \ref{lemma:unitary-Wg-expansion1} to each term  
$\WgU (p_b,q_b,n) = \WgU (p_b q^{-1}_b,n)$ in the product \eqref{eq:WgA}. One finds that
\begin{equation}\label{eq:WgAex}
\Wg_A[\pi](p,q,n) = (-1)^{|\sigma|} n^{-k/2 - |\sigma|}   \sum_{g \ge 0}  | P_A[\pi](p,q, 2g)| n^{-2g},
\end{equation}
where $P_{A}[\pi](p,q, 2g)$ is  the empty set if either $p$ or $q$ do not respect the
partition $\pi_A$ (that is, do not leave all blocks of $\pi_A$ invariant)
and else, it is the collection of solutions of \eqref{hurwitz-centered}, 
$\sigma= (i_1,j_1)\ldots (i_{|\sigma| + 2g}, j_{|\sigma| + 2g})$
where $j_p\le j_{p+1}$ and $i_p<j_p$ in $\dot{\INT{k}}$ and every transposition $(i_p,j_p)$ respects the partition $\pi_A$.
This follows from the facts that the condition ``$j_p\le j_{p+1}$ and $i_p<j_p$'' is a total order on transpositions and that
all other quantities in the sum \eqref{eq:unitary-Wg-expansion1} are multiplicative over blocks of $\pi_A$.

We observe from the definitions that $P_{\INT{T}} [\pi] ( p, q, 2g) = P (pq^{-1},2g) $ and that
$$
P[\pi](p,q,2g) = P (pq^{-1},2g) \backslash \bigcup_{ t = 1 }^T P_{\INT{T}\backslash \{t\} } [\pi] (p,q,2g).
$$
Indeed, $P_{\INT{T}\backslash \{t\} } [\pi] (p,q,2g)$ is the set of solutions of \eqref{hurwitz-centered} such that $p$, $q$ and the 
transpositions leave $\pi_t$ invariant. In addition, from the definition, we have for all sets $A,B$ in $\INT{T}$,
$$P_{A}[\pi](p,q,  2g )\cap P_{B}[\pi](p,q,  2g)
=P_{A\cap B}[\pi](p,q, 2g).$$
We recall that the inclusion-exclusion formula asserts that any sets $S_t \subset S$ we have 
\begin{equation}\label{eq:sieve}
\left| S \backslash \bigcup_{ t = 1 }^T S_t\right| = |S| + \sum_{\emptyset \ne B \subset \INT{T}} (-1)^{|B|}  \left|  \bigcap_{ t \in B  } S_t\right| = |S| + \sum_{A \subsetneq \INT{T}} (-1)^{T - |A|}  \left|  \bigcap_{ t \notin A  } S_t\right|,
\end{equation}

We apply the inclusion-exclusion formula to the sets $S_t = P_{\INT{T}\backslash \{t\} } [\pi]  (p,q,2g) $ and $S = P (pq^{-1},2g)$. From what precedes
$$
\bigcap_{t \notin A}  P_{\INT{T}\backslash \{t\} }[\pi] (p,q,n)   = P_{\bigcap_{t \notin A } \INT{T}\backslash \{t\} }    [\pi] (p,q,n)=  P_{A}[\pi] (p,q,n).
$$
We thus have proved that 
$$P[\pi](p,q,2g)  =  \sum_{A\subset \INT{T}}(-1)^{T-|A|} |P_A[\pi](p,q,2g)|. $$
This completes the proof by plugging this last identity in Equation \eqref{eq:WgU} and Equation \eqref{eq:WgAex}. \end{proof}

Out of this, we are able to propose the key estimate for the centered Weingarten function. In the statement below, if $p$ and $q$ are 
two partitions of $\INT{k}$ then $p \vee q$ is the finest partition coarser than both $p$ and $q$.

\begin{theorem}\label{thm:centered-wg-estimate}
 Let $k$  even with $2 k^{7/2} \leq  n^{2}$, $\pi = (\pi_t)_{t \in \INT{T}}$ be a partition of $\INT{k}$, let $\veps \in \{\cdot,-\}^k$ be a 
 balanced sequence. For all $p, q \in P^\veps_k$, we have the following estimate: 
$$|\WgU[\pi](p,q ,n)|\le  (1 +  3 k^{7/2} n^{-2} )  n^{-k/2-|pq^{-1} |} 4^{|pq^{-1} |} (  k^{7/4}n^{-1                                                                                                                                                                                                                                                                                                                                                                                                                                                                                                                                                                                                                                                                                                                                                                                                                                                                                                                                                                                                                                                                                                                                                                                                                                                                                                                                                                                                                                                                                                                                                                                                                                                                                                                                                           })^{r},$$
where $r$ is the number of blocks of $\pi$ to which $p\vee q$ can be restricted.
\end{theorem}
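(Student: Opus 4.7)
The plan is to reduce the estimate to a combinatorial count on the sets $P[\pi](p,q,2g)$ introduced in Theorem~\ref{lemma:unitary-Wg-expansion3}, and then to sum a geometric series. Taking absolute values inside the nonnegatively weighted expansion given by that theorem,
$$|\WgU[\pi](p,q,n)|\le n^{-k/2-|pq^{-1}|}\sum_{g\ge 0}|P[\pi](p,q,2g)|\,n^{-2g},$$
so the whole estimate reduces to a refinement of Proposition~\ref{combinatorial-estimate} for the $\pi$-restricted count. Using only the crude inclusion $P[\pi](p,q,2g)\subset P(pq^{-1},2g)$ and Proposition~\ref{combinatorial-estimate} would recover Corollary~\ref{cor-upper-bound-unitary} but with no dependence on $r$; the work is in extracting the saving $(k^{7/4}n^{-1})^r$.

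The combinatorial claim I would target is twofold. First, a \emph{vanishing} statement: $|P[\pi](p,q,2g)|=0$ whenever $2g<r$. Second, a \emph{bounded growth} statement for $2g\ge r$, of the form $|P[\pi](p,q,2g)|\le 4^{|pq^{-1}|}\,k^{(7/2)g}$, refining Proposition~\ref{combinatorial-estimate} by reducing the branching constant. The vanishing comes from the following observation: the permutation $\sigma=pq^{-1}$ preserves each of the $r$ blocks of $\pi$ to which $p\vee q$ restricts, whereas the defining constraint of $P[\pi]$ forces, for each such block $b$, at least one transposition in the factorization to fail to preserve $b$. Since any single transposition crosses at most two of the $r$ blocks simultaneously, one needs at least $\lceil r/2\rceil$ crossing transpositions, and each pair of crossings costs a unit of genus so that $\sigma$ again preserves the blocks, forcing $g\ge r/2$. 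The growth bound comes from rerunning the Hurwitz-type enumeration underlying~\cite{MR3680193} with the forced crossing transpositions held rigid and summing only over the remaining combinatorial data.

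Granted the combinatorial claim, the estimate becomes
$$\sum_{g\ge \lceil r/2\rceil}|P[\pi](p,q,2g)|\,n^{-2g}\le 4^{|pq^{-1}|}(k^{7/4})^r n^{-r}\sum_{h\ge 0}(k^{7/2}/n^2)^h\le 4^{|pq^{-1}|}(k^{7/4})^r n^{-r}(1+3k^{7/2}n^{-2}),$$
using the hypothesis $2k^{7/2}\le n^2$ (so that $k^{7/2}/n^2\le 1/2$) and the elementary inequality $(1-x)^{-1}\le 1+3x$ for $0\le x\le 1/2$; the case of odd $r$ introduces an extra factor $k^{7/4}/n\le 1/\sqrt 2$ that is harmlessly absorbed into the same prefactor. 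Multiplying by $n^{-k/2-|pq^{-1}|}$ yields the stated estimate, including the factor $4^{|pq^{-1}|}$.

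The principal obstacle is the refined combinatorial claim, in particular the removal of the constant branching factor. The bound of Proposition~\ref{combinatorial-estimate} on its own is not sharp enough to give convergence of the geometric series under the weaker hypothesis $2k^{7/2}\le n^2$, so one must adapt the topological/ribbon-graph argument of~\cite{MR3680193} to exploit that the forced crossing transpositions already consume part of the combinatorial freedom. Verifying that the exponent $7/4$ is the correct one by matching the enumeration with the minimum of $\lceil r/2\rceil$ crossings, and handling the parity of $r$ so that the half-integer genus contributions collapse into the prefactor, are the delicate technical points.
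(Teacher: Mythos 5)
Your strategy coincides with the paper's: take the unsigned expansion of Theorem \ref{lemma:unitary-Wg-expansion3}, show that $P[\pi](p,q,2g)=\emptyset$ whenever $2g<r$, and then sum the resulting geometric series starting at $g=r/2$. The vanishing claim is indeed the key step, and your justification (each of the $r$ blocks requires crossing transpositions, which must occur in pairs for $\sigma=pq^{-1}$ to preserve the block and can be shared between two blocks, forcing $2g\ge r$) is the paper's argument, even if your intermediate count of ``$\lceil r/2\rceil$ crossing transpositions'' should really be ``at least two per block, hence at least $r$ in total''.

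Where you go astray is in the diagnosis of the ``principal obstacle''. No refinement of Proposition \ref{combinatorial-estimate} is needed, and no rerun of the Hurwitz enumeration of \cite{MR3680193}. The point you are missing is that $p,q\in P^\veps_k$ are identified with bijections between the $k/2$ indices with $\veps_i=\cdot$ and the $k/2$ indices with $\veps_i=-$, so that $\sigma=pq^{-1}$ is a permutation of a set of cardinality $k/2$, not $k$ (consistently, the expansion carries the prefactor $n^{-k/2-|\sigma|}$). Hence the crude inclusion $P[\pi](p,q,2g)\subset P(\sigma,2g)$ combined with \eqref{eq:upper-bound-unitary} applied with $k/2$ in place of $k$ already yields
$$|P[\pi](p,q,2g)|\le 4^{|\sigma|}\bigl(6(k/2)^{7/2}\bigr)^g=4^{|\sigma|}\bigl(c\,k^{7/2}\bigr)^g,\qquad c=3\cdot 2^{-5/2}\approx 0.53<1,$$
which is exactly the ``refined'' bound $|P[\pi](p,q,2g)|\le 4^{|\sigma|}k^{7g/2}$ that you postulate and defer. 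Under $2k^{7/2}\le n^2$ the ratio $c\,k^{7/2}n^{-2}\le c/2$ makes the series converge, and $(c\,k^{7/2}n^{-2})^{r/2}\le (k^{7/4}n^{-1})^{r}$ gives the stated prefactor. With this observation substituted for the unproved combinatorial refinement, your argument is complete and is the paper's proof.
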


Before we supply the proof, we give the main idea, which is quite simple: 
since in Theorem \ref{lemma:unitary-Wg-expansion3}, we realize Weingarten functions as unsigned sums, it is enough to estimate 
each summand separately, 
In turn, our estimate is quite blunt and relies solely on the inclusion
$P[\pi](p,q , l) \subset P(p,q ,l) $. In other words, an estimate
good enough for our purposes is achieved just because a partial connectedness condition
kills the first terms of a series. 

\begin{proof}[Proof of Theorem \ref{thm:centered-wg-estimate}]
Set $\sigma = pq ^{-1}$. From Theorem \ref{lemma:unitary-Wg-expansion3} we have
$$|\WgU [\pi](p,q,n)|= n^{-k/2-|\sigma|}  \sum_{g \ge 0} | P[\pi](p,q, 2g) |n^{-2g}.$$
We observe that $P[\pi](p,q , 2g) \subset P(p,q , 2g)$. We also claim that  $P[\pi](p,q , 2g)=\emptyset$ as soon as $2g < r $. Indeed, if 
$b$ is a block of $p \vee q$, then any solution of \eqref{hurwitz-centered} with $l=0$ satisfies $i_p,j_p$ is in $b$. Hence, if a block $\pi_t$ is a union of blocks of $p \vee q$, then at least two extra transpositions from an element in $\pi_t$ to another block have to be added to 
be 
an admissible solution of  \eqref{hurwitz-centered}. These two extra transpositions could be shared between two such blocks of $\pi$. 
It follows that  $P[\pi](p,q , 2g)\ne \emptyset$ implies $2g \geq r$ and 
$$|\WgU[\pi](p,q,n)| \le n^{-k/2-|\sigma |}  \sum_{g \ge r/2} |   P(p, q ,2g) | n^{-2g}.$$
The right-hand side can be estimated 
thanks to \eqref{eq:upper-bound-unitary} (applied to $k/2$). We get if $c = 3 / 2^{5/2}$ and $c k^{7/2}n^{-2} < 1$,
$$|\WgU[\pi](p,q,n)|\le n^{-k/2-|\sigma |} 4^{|\sigma |} \frac{ (c k^{7/2}n^{-2})^{ r/2}}{1-c k^{7/2}n^{-2}}.$$
If we assume further
$2 c k^{7/2}n^{-2}\le 1$, we obtain 
$$|\WgU[\pi](p , q ,n)|\le  \PAR{ 1 + \frac{4 c k^{7/2}}{n^2} } n^{-k/2-|\sigma |} 4^{|\sigma  |} (c k^{7/2}n^{-2})^{r/2},$$
as required (since $c \simeq 0.53$). \end{proof}

\subsection{From Weingarten calculus to Wick calculus}

\subsubsection{Case without brackets}

We start with
$x, y$ in $\INT{n}^k$ and  a balanced sequence $\varepsilon \in \{\cdot,-\}^k$.
If $U = (U_{ij})$ is Haar-distributed on $\Un$, we want to compare
$|\EE(U_{x_1y_1}^{\varepsilon_1}\ldots U_{x_ky_k}^{\varepsilon_k} )|$
with the matrix $U$ replaced by $G_{ij} / \sqrt n$, where $G_{ij}$ are independent complex standard Gaussian variables.  

We need a new definition. Let  $x,y \in \INT{n}^k$. If $u \in [n]$, we define the number of {\em left arms} of $u \in [n]$ in $(x,y)$ as 
$\sum_i \IND (x_i = u)$ and the number of {\em right arms} as $\sum_i \IND ( y_i = u)$. The pair $(x,y)$ is called an {\em even sequence} 
if for any $u \in [n]$, the number of left and right arms are even. 

Our result is as follows:

\begin{theorem}\label{theorem-warmup}
 Let $k$ be  even, $x, y$ in $\INT{n}^k$ and let $\veps \in \{\cdot,-\}^k$ be a balanced sequence. 
If $n \geq 4$ and $2 k^{7/2} \leq n^2$ then
\begin{equation*}\label{eq:prodtw}
n^{k/2} \left| \EE \PAR{ \prod_{i=1}^k U_{x_iy_i}^{\varepsilon_i}} \right|
\le \PAR{ 1 + 3 k^{7/2} n^{-2}} \EE \PAR{ \prod_{i=1}^k \PAR{G_{x_iy_i}^{\varepsilon_i}+ k n^{-1/4}}}.
\end{equation*}
Moreover, the above expectation on the left-hand side is zero unless $(x,y)$ is an even sequence.
\end{theorem}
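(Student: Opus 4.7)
My plan is to expand the left-hand side via the Weingarten formula, group the resulting double sum according to the pairs common to $p$ and $q$, and match each group to a monomial from the expansion of $\prod_i(G_{x_iy_i}^{\varepsilon_i} + k n^{-1/4})$. The even-sequence assertion will emerge directly from the shape of the expansion.

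Starting from Theorem \ref{th:unitary-Wg}, and identifying $P_k^\varepsilon$ with $S_{k/2}$, the balanced hypothesis on $\varepsilon$ gives
\begin{equation*}
\EE \prod_{i=1}^k U_{x_iy_i}^{\varepsilon_i}
= \sum_{p, q \in P_k^\varepsilon} \delta_p(x)\, \delta_q(y)\, \Wg(pq^{-1}, n).
\end{equation*}
For the left-hand side to be nonzero, some $p \in P_k^\varepsilon$ must satisfy $\delta_p(x) = 1$, which forces, at every $u \in \INT{n}$, the number of $i$ with $\varepsilon_i = \cdot$, $x_i = u$ to equal the number with $\varepsilon_i = -$, $x_i = u$; in particular the left arm count at $u$ is even, and symmetrically for right arms.

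Next I apply Corollary \ref{cor-upper-bound-unitary} in $S_{k/2}$, which under $2 k^{7/2} \le n^2$ gives $|\Wg(\sigma, n)| \le (1 + 3 k^{7/2} n^{-2})\, n^{-k/2 - |\sigma|}\, 4^{|\sigma|}$. For each $(p,q)$, let $S \subseteq \INT{k}$ be the union of pairs common to $p$ and $q$, let $p'$ be the common pair partition on $S$, and set $2m = k - |S|$. The restriction of $\sigma = pq^{-1}$ to $\INT{k} \setminus S$ is fixed-point free (any fixed point would yield another common pair), hence $m \ne 1$ and $|\sigma| \ge m/2$; since $n \ge 4$, this yields $(4/n)^{|\sigma|} \le (4/n)^{m/2}$. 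Moreover $\delta_p(x)\delta_q(y) \le \delta_{p'}(x)\delta_{p'}(y)$.

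At most $(m!)^2$ pairs $(p,q)$ extend a given common part $(S, p')$. Combined with the key combinatorial inequality $(m!)^2\, 2^m \le k^{2m}$---trivial for $m = 0$, vacuous for $m = 1$, and for $m \ge 2$ following from $k \ge 2m$ via $k^{2m} \ge (2m)^{2m} = 4^m m^{2m} \ge 2^m (m!)^2$---I obtain
\begin{equation*}
\sum_{(p,q) \text{ extending } (S,p')} (4/n)^{|pq^{-1}|} \;\le\; (m!)^2\, 2^m\, n^{-m/2} \;\le\; (k\, n^{-1/4})^{k - |S|}.
\end{equation*}
Multiplying by $n^{k/2}$ and summing over $(S, p')$ yields the shifted-Gaussian expression on the right-hand side, once one identifies the latter by expanding $\prod_i (G_{x_iy_i}^{\varepsilon_i} + k n^{-1/4})$ binomially and applying the complex Wick formula \eqref{eq:WickC} to each resulting subproduct. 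The main obstacle is calibrating the shift size $k n^{-1/4}$ so that it absorbs both the extension count $(m!)^2$ and the Cayley weight $(4/n)^{m/2}$; this is precisely the role played by the inequality $(m!)^2 2^m \le k^{2m}$, and explains the appearance of the factors $k$ and $n^{-1/4}$ in the statement.
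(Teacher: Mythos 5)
Your proposal is correct and follows essentially the same route as the paper's proof: expand by the Weingarten formula, group pairs $(p,q)$ by their common part $S$ (the paper's set $A$), use Corollary \ref{cor-upper-bound-unitary} on $S_{k/2}$ together with the lower bound $|pq^{-1}|\ge (k-|S|)/4$ coming from the fixed-point count, bound the number of extensions by $(((k-|S|)/2)!)^2$, and match the result with the binomial expansion of $\prod_i(G_{x_iy_i}^{\varepsilon_i}+kn^{-1/4})$ via the complex Wick formula. The only (cosmetic) difference is that you absorb the factorial via $(m!)^2 2^m\le k^{2m}$ while the paper uses $m!\le (m/e)^m$; both yield the stated shift $kn^{-1/4}$.
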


\begin{proof}
From Theorem \ref{th:unitary-Wg}, we have 
\begin{equation}\label{eq:prodWgA}
n^{k/2} \EE \PAR{ \prod_{i=1}^k U_{x_iy_i}^{\varepsilon_i}}  = \sum_{p,q\in P^\veps_k}\delta_{p}(x)\delta_{q}(y) n^{k/2} \WgU (p,q,n),
\end{equation}
where we have identified a pair partition in $P^\veps_k$ with a bijection from the set $ \dot{\INT{k}} \subset \INT{k}$ of $i$'s such 
that $\veps_i = \cdot$ to the set of $i$'s such that $\veps_i = -$. The final statement of the theorem follows directly. 

Given a pair  $(p,q)$ involved in the right-hand side of \eqref{eq:prodWgA},
we introduce a subset $A$ of $\INT{k}$ which is the maximal subset such that
$p_{A}, q_{A}$, the restrictions of $p$ and $q$ to $A$, are well defined and describe the \emph{same} pair partition. In other words, 
$A$ is the union of all common pairs of $p$ and $q$. 
Note that $A$ could be empty or $\INT{k}$, but it has an even number of elements. 

Our strategy is to evaluate simultaneously all $(p,q)$ that yield the same $A$.
If we can find $\delta, \eta \geq 0$ such that for all $A \subset \INT{k}$, the sum in \eqref{eq:prodWgA} restricted to  pairs
$(p,q)$ that yield $A$ is bounded above by 
\begin{equation}\label{eq:wgA200}
( 1 + \delta ) \EE\PAR{\prod_{i\in A} G_{x_iy_i}^{\varepsilon_i}}\eta^{|A^c|}
\end{equation}
then we would deduce that 
$$n^{k/2} \left| \EE \PAR{ \prod_{i=1}^k U_{x_iy_i}^{\varepsilon_i}} \right|  \leq ( 1  + \delta) \EE   \PAR{ \prod_{i=1}^k \PAR{G_{x_iy_i}^{\varepsilon_i}+\eta }}
$$
since we have the following expansion: 
$$
\EE \PAR{ \prod_{i=1}^k \PAR{G_{x_iy_i}^{\varepsilon_i}+\eta }} = \sum_{A \subset \INT{k}} \EE \PAR{ \prod_{i\in A} G_{x_iy_i}^{\varepsilon_i}}\prod_{i\in A^c}\eta .
$$

To that end, we set $c = 3 / \sqrt 2$ and  $\sigma = pq^{-1}$ (seen as a bijection on $\dot{\INT{k}}$). 
According to Corollary \ref{cor-upper-bound-unitary} (applied to $k/2$), if  $c k ^{7/2} \leq n ^2$ and $\delta = 2 c k^{7/2} / n^2$, we have
$$n^{k/2 } \WgU  (p,q,n)\le \PAR{ 1 + \delta} \PAR{ \frac 4 n }^{|\sigma |}.$$

It is standard that if $\tau \in S_m$, then $|\tau | = m - \sum_l c_l$ where $c_l$ is the number of cycles of length $l$. 
Since $\sum l c_l = m \geq 2 (\sum_l c_l) - c_1$, we get $|\tau| \geq (m -c_1) /2 $. By assumption, all cycles of 
$\sigma$ in $A^c \cap \dot{\INT{k}}$ have length at least $2$, so the number of cycles of length $1$ of $\sigma$ is at most 
$|A \cap \dot{\INT{k}}| = |A|/2$. We deduce that
$$|\sigma | \geq (k   - |A| ) /4 = |A^c|/4.$$

We thus have proved that, if $n \geq 4$,  and $\eta_0 =( 4/n)^{1/4}$, 
$$n^{k/2} \WgU (p,q,n)\le  ( 1+ \delta) \eta_0^{|A^c|}.$$

Therefore, the  sum in \eqref{eq:prodWgA} restricted to pair partitions
$p,q$ that yield $A$ is upper bounded by 
$$(1+\delta) \eta_0^{|A^c|}    \PAR{(|A^c|/2)! }^2 \sum_{\tau \in P^\veps_{A}} \delta_\tau(x) \delta_\tau(y) , $$
where $P^\veps_A$ is the set of bijections on $A$ from the set $i$'s such that $\veps_i = \cdot$ to the set of $i$'s such that $\veps_i = -$. 
Moreover, the term $((|A^c|/2)! )^2$ accounts for the choices of $(p_{A^c}, q_{A^c})$.  From Wick formula \eqref{eq:WickC}, we have
$$ \EE\PAR{\prod_{i\in A} G_{x_iy_i}^{\varepsilon_i}}  =  \sum_{ \tau \in P^\veps_{A}} \delta_\tau(x) \delta_\tau(y) .$$
Finally, we recall that $m! \leq (m e^{-1})^m$. We deduce that \eqref{eq:wgA200} holds with $\delta$ as above and 
$\eta = \eta_0 e^{-1} (k/2)$. It concludes the proof.
\end{proof}

\subsubsection{Case with brackets}

We now move to the case with brackets. If $U = (U_{ij})$ is Haar-distributed on $\Un$, we want to compare expectations as in 
Equation \eqref{eq:prodbraex}
with the matrix $U$ replaced by $G_{ij} / \sqrt n$, where $G_{ij}$ are independent complex standard Gaussian variables. The main result 
in this direction is:
\begin{theorem}\label{theorem-with-brackets}
Let $k$  even with $2 k^{7/2} \leq  n^{2}$ and $n \geq 4$, $\pi = (\pi_t)_{t \in \INT{T}}$ be a partition of $\INT{k}$ such that each block has at most $\ell$ elements. Let $\veps \in \{\cdot,-\}^k$ be a balanced sequence. For any $x,y$ in  $\INT{n}^k$, we have
$$n^{k/2}\left| \EE \PAR{\prod_{t=1}^T \big[  \prod_{i \in \pi_t} U_{x_{i} y_{i}}^{\varepsilon_{i}} \big]} \right|\le (1 + \delta) \EE \PAR{\prod_{t=1}^T \PAR{ \big[  \prod_{i \in \pi_t} G_{x_{i} y_{i}}^{\varepsilon_{i}}   \big] + \eta }},$$
with $\delta = 3 k^{7/2} n^{-2}$ and $\eta  = 2 k^\ell n^{-1/4}$. Moreover, if each block $\pi_t$ 
contains an element with $\veps_i = \cdot$ and another with $\veps_i = -$, the same bound holds with $\eta = 2 k^\ell n^{-1/2}$. 
Finally, the above expectation on the left-hand side is zero unless $(x,y)$ is an even sequence.
\end{theorem}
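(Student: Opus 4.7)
The plan is to mirror the proof of Theorem \ref{theorem-warmup}, replacing the uncentered Weingarten bound of Corollary \ref{cor-upper-bound-unitary} by its centered counterpart Theorem \ref{thm:centered-wg-estimate}, and tailoring the combinatorial matching between LHS and RHS to the bracket structure. First, I would apply Proposition \ref{weingarten-centered} to write
$$n^{k/2}\,\EE \PAR{\prod_{t=1}^T \big[\prod_{i \in \pi_t} U_{x_iy_i}^{\veps_i}\big]} = \sum_{p,q \in P_k^\veps}\delta_p(x)\delta_q(y)\,n^{k/2}\,\WgU[\pi](p,q,n).$$
Since $\delta_p(x)\delta_q(y)\neq 0$ forces all left- and right-arm counts to be even, the left-hand side vanishes unless $(x,y)$ is an even sequence, giving the last sentence of the theorem. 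Invoking Theorem \ref{thm:centered-wg-estimate} together with the inequality $|pq^{-1}| \ge |A^c|/4$ established in the proof of Theorem \ref{theorem-warmup} (where $A$ is the union of common pairs of $p$ and $q$), each summand satisfies
$$n^{k/2}|\WgU[\pi](p,q,n)| \le (1+\delta)\,\eta_0^{|A^c|}\,(k^{7/4}n^{-1})^{r},$$
with $\eta_0 = (4/n)^{1/4}$ and $r$ the number of blocks of $\pi$ respected by $p \vee q$.

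Next, I would regroup the sum over $(p,q)$ by the quadruple $(A, \tau, \sigma_1, \sigma_2)$, where $\tau = p|_A = q|_A$ is the common pairing and $\sigma_1 = p|_{A^c}, \sigma_2 = q|_{A^c}$ are the free extensions having no common pair. A lower bound $r \ge |R(A,\tau)|$ is available, with $R(A,\tau) \subset \INT{T}$ the set of blocks $\pi_t \subset A$ that $\tau$ pair-partitions; crucially, this set depends only on $(A,\tau)$ and not on the extensions. Summing $\delta_{\sigma_i} \le 1$ over the $((|A^c|/2)!)^2$ possible extensions, invoking Stirling's bound $m! \le (m/e)^m$, and absorbing the $\eta_0^{|A^c|}$ factor yields a contribution bounded by $\tilde\eta^{|A^c|}$ for some $\tilde\eta = O(kn^{-1/4})$. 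Splitting $\tau = \tau_R \cup \tau_B$ along $B = A \setminus K_R$ with $K_R = \bigcup_{t \in R(A,\tau)}\pi_t$, the $\tau_R$-sum over block-respecting pair partitions produces a product of block moments $\prod_{t \in R}\EE\prod_{i \in \pi_t}G_{x_iy_i}^{\veps_i}$, while the $(B, \tau_B)$-sum combined with the $\tilde\eta$-factor yields a $\tilde\eta$-padded Gaussian expectation on $K_R^c$. Bounding each block moment by its number of admissible $\veps$-pairings (at most $(|\pi_t|/2)! \le k^{\ell/2}$), and checking that the resulting per-block factor $(k^{7/4}n^{-1}) \cdot k^{\ell/2} + \tilde\eta$ is dominated by $\eta = 2 k^\ell n^{-1/4}$ under the hypotheses $2k^{7/2} \le n^2$ and $n \ge 4$, one recognizes the resulting bound as the expansion of $(1+\delta)\EE\prod_t([X_t^G] + \eta)$.

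The refinement to $\eta = 2k^\ell n^{-1/2}$ under the hypothesis that each block contains both $\cdot$ and $-$ entries would follow by improving the lower bound on $|pq^{-1}|$ from $|A^c|/4$ to $|A^c|/2$: the sign-matching constraint inherent to $P_k^\veps$ combined with each block being balanced forbids the short cycles in $\sigma$ on $A^c$ that realize the weaker bound, doubling the exponent in $\eta_0$. The principal obstacle is the careful combinatorial matching of the second paragraph: ensuring that the LHS regrouping by $(A, \tau, \sigma_1, \sigma_2)$ faithfully dominates the block-by-block Gaussian RHS with $\eta$-padding, accounting cleanly for the interplay among Stirling factors, $\eta_0^{|A^c|}$, and the per-block penalty $(k^{7/4}n^{-1})^{|R|}$ in the full regime $n^2 \ge 2k^{7/2}$.
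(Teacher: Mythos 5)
Your overall strategy is the paper's: expand via Proposition \ref{weingarten-centered}, bound each centered Weingarten function with Theorem \ref{thm:centered-wg-estimate}, group the pairs $(p,q)$ by their ``common part'', count the extensions by $((\cdot)!)^2$ with Stirling, and match the result against the expansion \eqref{eq:prodbrag} of the right-hand side. The vanishing statement and the mechanism for the improved $\eta$ in the balanced-block case (a better bound on the number of fixed points of $pq^{-1}$, hence $|pq^{-1}|\geq |A^c|/2$ instead of $|A^c|/4$) are also as in the paper.

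There is, however, a genuine gap in your regrouping step, and it sits exactly where you flag ``the principal obstacle''. You define $A\subset\INT{k}$ as the union of the common pairs of $p$ and $q$, carrying over the definition from the proof of Theorem \ref{theorem-warmup}. The paper instead defines $A\subset\INT{T}$ as the maximal set of \emph{blocks} such that the restrictions of $p$ and $q$ to $\INT{k}_A=\cup_{t\in A}\pi_t$ are well defined and coincide. With your index-level $A$, a block $\pi_t$ can be entirely contained in $A$, not internally paired by the common pairing $\tau$, and yet $\tau$-linked to a block that meets $A^c$ (e.g.\ $\pi_1=\{1,2\}$, common pairs $\{1,3\}$ and $\{2,5\}$ with $3\in\pi_2$, $5\in\pi_3$, and $\pi_2$ containing an element on which $p$ and $q$ disagree). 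Such a block cannot be kept in the Gaussian category (the restriction of $\tau$ to the retained blocks is then not a pair partition of their index set, so it corresponds to no term of $P^\veps_S(\pi)$ in \eqref{eq:prodbrag}), and it cannot be moved to the $\eta$-padded category either, because it contributes $0$ to $|A^c|$ and $0$ to the exponent $r$ of Theorem \ref{thm:centered-wg-estimate}: there is no factor of order $n^{-1/4}$ left to pay for it. Your $R(A,\tau)$/$K_R$/$B$ bookkeeping does not resolve this, since these intermediate blocks lie in $B$ but are $\tau$-linked across the boundary of $A$. The paper's block-level definition of $A$ removes the problem at the source: well-definedness of $p_A=q_A$ forces $\tau$ to pair $\INT{k}_A$ with itself, every block of $A^c$ is dropped wholesale, and the bound $|\INT{k}_{A^c}|\leq \ell |A^c|$ converts the Stirling factor $((|\INT{k}_{A^c}|/2)!)^2$ and $\eta_0^{|A^c|}$ into the per-block penalty $\eta_2=k^{\ell}n^{-1/4}$. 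To repair your argument you would need to either adopt the block-level $A$, or add a separate argument handling the $\tau$-linked straddling blocks; as written, the domination by the right-hand side does not close.
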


We start by evaluating the average of products of brackets of shifted Gaussian variables.
\begin{lemma}\label{le:gaussiansieve}
 Let $k$ be  even, let $\veps \in \{\cdot,-\}^k$ be a balanced sequence and let $\pi = (\pi_t)_{t \in \INT{T}}$ be a partition of $\INT{k}$. 
 If $(g_i)_{i \in \INT{k}}$ is a complex Gaussian vector, then, for any complex number $\eta$, we have  
 $$
 \EE \PAR{\prod_{t=1}^T \big[  \prod_{i \in \pi_t} g_{i}^{\varepsilon_{i}}\big]} =  \sum_{p \in P^\veps (\pi)} \prod_{ (i,j) \in p} \dE ( g_i \bar g_{j} ),
 $$
where $P^\veps (\pi)$ is the set (possibly empty) of pair partitions on $\INT{k}$ such that all pairs $(i,j)$ satisfy:
(i)  $\veps_i = \cdot$, 
 $\veps_j = -$ 
 or the other way around,
 and (ii) for each block of $\pi$, there exists a pair $(i,j)$ in $p$  with one element in the block and the other outside. 
\end{lemma}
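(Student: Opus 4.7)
The plan is to combine an inclusion-exclusion expansion of the brackets with the classical complex Wick formula \eqref{eq:WickC}, then swap the order of summation. Concretely, set $X_t = \prod_{i\in \pi_t} g_i^{\veps_i}$ and apply the identity \eqref{eq:prodbracket}:
$$\EE \prod_{t=1}^T [X_t] = \sum_{A\subset \INT{T}} (-1)^{T-|A|} \EE \PAR{\prod_{t\in A} X_t} \prod_{t\notin A} \EE(X_t).$$

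First I would apply \eqref{eq:WickC} to each expectation on the right-hand side. For a fixed $A$, let $\pi_A$ denote the coarsened partition of $\INT{k}$ whose blocks are the $\pi_t$ for $t\notin A$ together with the single merged block $\bigcup_{t\in A}\pi_t$, exactly as in the proof of Proposition \ref{weingarten-centered}. The product of Wick expansions then produces a sum over pair partitions $p\in P_k^\veps$ that leave every block of $\pi_A$ invariant, each weighted by $\prod_{(i,j)\in p}\EE(g_i\bar g_j)$.

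Next I would exchange summation orders. For $p\in P_k^\veps$, let $B(p)\subset \INT{T}$ be the set of indices $t$ such that $p$ leaves $\pi_t$ invariant (equivalently, the restriction of $p$ to $\pi_t$ is itself a pair partition). A direct check shows that $p$ respects $\pi_A$ if and only if $\INT{T}\setminus B(p)\subset A$. The coefficient of $\prod_{(i,j)\in p}\EE(g_i\bar g_j)$ is therefore
$$\sum_{A\supset \INT{T}\setminus B(p)} (-1)^{T-|A|} = \sum_{A'\subset B(p)} (-1)^{|B(p)|-|A'|},$$
which by the standard binomial identity equals $1$ if $B(p)=\emptyset$ and $0$ otherwise. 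Hence only those pair partitions $p$ with $B(p)=\emptyset$ contribute, namely the ones for which every block $\pi_t$ contains at least one index matched outside $\pi_t$. This is precisely the defining property of $P^\veps(\pi)$, yielding the claimed formula.

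The argument is mostly bookkeeping. The only point requiring a little care is matching the condition ``$p$ leaves every block of $\pi_A$ invariant'' with ``$\INT{T}\setminus B(p)\subset A$'', so that the inner alternating sum telescopes cleanly. There is no analytic obstacle: everything reduces to the ordinary complex Wick formula, a finite inclusion-exclusion, and the trivial identity $\sum_{A'\subset B}(-1)^{|B|-|A'|}=\IND[B=\emptyset]$.
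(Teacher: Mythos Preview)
Your argument is correct and is exactly the route the paper has in mind: the paper omits the proof, remarking only that it is elementary and closely related to the inclusion--exclusion argument of Theorem \ref{lemma:unitary-Wg-expansion3}, which is precisely the combination of \eqref{eq:prodbracket}, the Wick formula, and the alternating-sum identity you carry out.
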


The proof is elementary and closely related to the proof of Theorem \ref{lemma:unitary-Wg-expansion3}; therefore, we omit it.

\begin{proof}[Proof of Theorem \ref{theorem-with-brackets}]
We follow the same strategy as in the proof of Theorem \ref{theorem-warmup}. By Proposition\;\ref{weingarten-centered}, 
\begin{equation}\label{eq:prodWgU2}
n^{k/2}  \EE \PAR{\prod_{t=1}^T \big[  \prod_{i \in \pi_t} U_{x_{i} y_{i}}^{\varepsilon_{i}} \big]}   = \sum_{p,q\in P^\veps_k}\delta_{p}(x)\delta_{q}(y) n^{k/2} \WgU[\pi] (p,q,n),
\end{equation}
where as usual we have identified a pair partition in $P^\veps_k$ with a bijection from the set $ \dot{\INT{k}} \subset \INT{k}$ of $i$'s 
such that $\veps_i = \cdot$ to the set of $i$'s such that $\veps_i = -$.  The last statement of the theorem follows directly.

On the other hand, for $B \subset \INT{T}$, we set $\INT{k}_B = \cup_{t \in B} \pi_t$.  We define  $P^\veps_B$ as the set of pair partitions on 
$\INT{k}_B$ whose pairs $(i,j)$ are such that $\veps_i = \cdot$, $\veps_j = -$ and we let  $P^\veps_B (\pi) \subset P^\veps_B$ be the pair partitions 
such that for each block of $\pi$, there exists a pair $(i,j)$ with one element in the block and the other outside. By Lemma \ref{le:gaussiansieve}, 
we have, for any numbers $\eta_1,\eta_2$,
\begin{eqnarray}
\EE  \PAR{\prod_{t \in T} \PAR{ \big[  \prod_{i \in \pi_t} (G_{x_iy_i}^{\varepsilon_{i}}  ) \big] + \eta_1 + \eta_2 }} & =&  \sum_{A \subset \INT{T}} \eta_2^{|A|^c} \sum_{B \subset A} \eta_1^{|B^c|} \sum_{\tau \in P^\veps_B (\pi)} \EE  \PAR{\prod_{t \in B} \big[  \prod_{i \in \pi_t} (G_{x_iy_i}^{\varepsilon_{i}}  ) \big] }\nonumber \\
& = &  \sum_{A \subset \INT{T}} \eta_2^{|A|^c} \sum_{B \subset A} \eta_1^{|B^c|} \sum_{\tau \in P^\veps_B (\pi)} \delta_\tau(x)\delta_\tau(y),\label{eq:prodbrag}
\end{eqnarray}
where  $B^c = A \backslash B$ is the complement of $B$ in  $A$. 

Given a pair $(p,q)$ involved in the right-hand side  of \eqref{eq:prodWgU2}, we introduce a subset $A$ of $\INT{T}$ which is the 
maximal subset such that
$p_{A}, q_{A}$, the restrictions of $p$ and $q$ to $\INT{k}_A$, are well defined and describe the \emph{same} pair partition, say $\tau \in P^\veps_A$. 
According to Theorem \ref{thm:centered-wg-estimate}, if $\sigma = pq^{-1}$,  we have 
$$
n^{k/2}  |\WgU[\pi](p,q ,n)|\le  \PAR{ 1 +  \delta } \PAR{\frac{4}{n}}^{|\sigma|}   \eta^{r},
$$
with $\delta = 3 k^{7/2} n^{-2}$, $\eta=k^{7/4}n^{-1}$ and $r$ is the number of blocks of $\pi$ to which $p \vee q$ can be restricted. 
The number of cycles of $\sigma$ of length $1$ is at most $k/2 - |A^c|/2$ (remark: it is at most $k/2 - |A^c|$ if each block contains 
an element with $\veps_i = \cdot$ and another with $\veps_i = -$). As in the proof of Theorem \ref{theorem-warmup}, we deduce that 
$|\sigma| \geq |A|^c / 4$ and, if $n \geq 4$, 
$$
n^{k/2}  |\WgU[\pi](p,q ,n)|\le  \PAR{ 1 +  \delta } \eta_0^{|A^c|}   \eta^{r},
$$
with $\eta_0  = (4/n)^{1/4}$. We note also that $r$ is additive over the restrictions of $\pi$ to $\INT{k}_A$ and $\INT{k}_{A^c}$. 
Since there are at most $((|\INT{k}_{A^c}|/2) !)^2$ choices for the restrictions of $p$ and $q$ to $\INT{k}_{A^c}$, the contribution in 
\eqref{eq:prodWgU2} of the sum over all pairs $(p,q)$ which yields the same set $A$ is thus upper bounded by 
\begin{equation}\label{eq:prodWgU2A}
\PAR{ 1 +  \delta } \eta_0^{|A^c|} ((|\INT{k}_{A^c}|/2) !)^2   \sum_{\tau \in P^{\veps}_{A}} \eta^{r(\tau)} \delta_{\tau}(x)\delta_{\tau}(y),
\end{equation}
where  $r(\tau)$ 
is the number of blocks of $\pi$ which is a union of pairs of $\tau$.

By assumption $|\INT{k}_{A^c}| \leq \ell |A|^c$. Using $m! \leq (m e^{-1})^m$, we deduce that the expression \eqref{eq:prodWgU2A} 
is upper bounded by 
\begin{equation}\label{eq:prodWgU2A2}
\PAR{ 1 +  \delta } \eta_2^{|A^c|}  \sum_{\tau \in P^{\veps}_{A}} \eta^{r(\tau)} \delta_{\tau}(x)\delta_{\tau}(y),
\end{equation}
with $\eta_2 = k^\ell n ^{-1/4}$. We now take a closer look at the sum in \eqref{eq:prodWgU2A2}. Let us call $B\subset A$ 
the complement in  $A$ of the blocks of $\pi$ which are union of pairs of $\tau$. We have $r(\tau) = |B^c|$ where 
$B^c = A \backslash B$. For a given set $B \subset \INT{k}_A$, let us call $P^\veps_{A,B}$ the subset  
$P^{\veps}_{A}$ which yields the set $B$. By assumption, the restriction of $\tau$ in $P^\veps_{A,B}$ to $B$ is a pair partition in 
$P_B^\veps(\pi)$. Conversely,  recall that the blocks of $\pi$ have at most $\ell$ elements. For a given pair partition $\tau'$ in $P_B^\veps(\pi)$, 
there are at most $(\ell-1)^{|B^c|/2}$ partitions in  
$P^\veps_{A,B}$  whose restriction to $B$ is $\tau'$. If $\eta_1 = \ell^{\ell/2} \eta$, we thus have proved that 
$$
\sum_{\tau \in P^{\veps}_{A}} \eta^{r(\tau)} \delta_{\tau}(x)\delta_{\tau}(y) \leq \sum_{B \subset  A} \eta_1^{|B^c|} \sum_{\tau \in P^\veps_B (\pi)} \delta_\tau(x)\delta_\tau(y).
$$
Note also that $\eta_1 \leq \eta_2$ for our choice of $k$. This concludes the proof in view of Equation \eqref{eq:prodbrag}. \end{proof}

\begin{remark}\normalfont
Note in passing that the lower bound in Proposition \ref{combinatorial-estimate} can be used to show that there is some symmetry 
in Theorem \ref{theorem-warmup} and Theorem
 \ref{theorem-with-brackets}. Namely, it is possible in these results to swap the roles of the 
 Gaussian entries and of the unitary entries (the values of the constants $\delta,\eta$ need to be adjusted). This is just an esthetic comment about the sharpness of the comparison, as
 we need the bound, as stated in the Theorem
 \ref{theorem-with-brackets}.
 \end{remark}

\subsection{Moment bounds for a product of unitaries}

We conclude this section with a corollary of Theorem \ref{theorem-with-brackets}. Let $x,y$ be two sequences in $\INT{n}^k$. 
 The {\em multiplicity}  of $e = (a,b) \in \INT{n}^2$ is defined as $\sum_{i} \IND ( (x_{i},y_{i} ) = e )$. The set of pairs of multiplicity at 
 least one is the set of visited pairs $\cup_i \{ (x_i,y_i) \}$. Moreover if $\pi = (\pi_t)_{t \in \INT{T}}$ is a partition of $\INT{k}$, we say that 
 $\pi_t$ is an {\em isolated} block of $(x,y)$ if for all $i \in \pi_t$, for all $(x_{i},y_{i}) \ne (x_{j}, y_{j})$ for all 
 $j \in \INT{k} \backslash \pi_t$ (in other words, $(x_{i},y_{i})$ is of multiplicity $0$ in the sequence 
 $(x_{j},y_{j})_{j \in \INT{k} \backslash \pi_t}$).

\begin{corollary}\label{cor:WG2}
 Let $k = q T$  even with $k^{q+1} \leq  n^{1/4}$, $\pi = (\pi_t)_{t \in \INT{T}}$ be a partition of $\INT{k}$ such that each block has  at least
 $q$ elements. Let $\veps \in \{\cdot,-\}^k$ be a balanced sequence. For any $x,y$ in  $\INT{n}^k$, we have, for some universal 
 constant $c>0$, 
$$\left| \EE \PAR{\prod_{t=1}^T \big[  \prod_{i \in \pi_t} U_{x_{i} y_{i}}^{\varepsilon_{i}} \big]} \right| \leq c n^{-\frac{k}{2}}  \eta^{  b + \frac {e_1} q } k^{\frac{m_4}{ 2}},$$
where $\eta = c k^{q/2} n^{-1/8}$, $e_1$ is the number of pairs of multiplicity $1$, $b$ is the number of isolated $(x_t,y_t)$ and 
$m_{4}$ is the sum of multiplicities of pairs with multiplicity at least $4$.
\end{corollary}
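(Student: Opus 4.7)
The plan is to apply Theorem~\ref{theorem-with-brackets} to replace each unitary entry by a complex Gaussian entry up to a small correction $\eta_\star$, then expand the resulting Gaussian moment via Lemma~\ref{le:gaussiansieve}, and finally extract the advertised factors by two pieces of bookkeeping: which blocks are forced into $A^c$, and how many pair matchings each multiplicity class admits. Since $k=qT$ and each of the $T$ blocks has at least $q$ elements, every block has exactly $q$ elements, so Theorem~\ref{theorem-with-brackets} applies with $\ell=q$ and $\eta_\star = 2k^q n^{-1/4}$. Expanding the resulting product over subsets $A \subset \INT{T}$ and applying Lemma~\ref{le:gaussiansieve} to each $\EE\prod_{t\in A}[\,\cdot\,]$, the right-hand side becomes
\[
(1+\delta)\sum_{A\subset\INT{T}}\eta_\star^{|A^c|}\sum_{\tau}\delta_\tau(x)\delta_\tau(y),
\]
where $\tau$ ranges over pair partitions of $\bigcup_{t\in A}\pi_t$ whose pairs $(i,j)$ have opposite $\veps$-signs and $(x_i,y_i)=(x_j,y_j)$, and which cross every block of $A$.

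Next I would identify the blocks forbidden from $A$. A block $\pi_t$ cannot lie in $A$ if it is isolated (then no pair of $\tau$ can leave $\pi_t$) or if some $i\in\pi_t$ has $(x_i,y_i)$ of multiplicity one (then $i$ admits no valid $\tau$-partner, since partners must share $(x,y)$). Letting $B_0$ denote the $b$ isolated blocks and $B_1$ the blocks carrying a multiplicity-one index, every contributing $A$ satisfies $A^c \supset B_0\cup B_1$. Because every block has size $q$, the $e_1$ multiplicity-one indices occupy at least $e_1/q$ distinct blocks, so
\[
|A^c|\ \geq\ |B_0\cup B_1|\ \geq\ \max(b,\,e_1/q)\ \geq\ \tfrac{1}{2}(b+e_1/q).
\]
Choosing the constant $c$ in $\eta = ck^{q/2}n^{-1/8}$ large enough that $\eta_\star \leq \eta^2$, and using $\eta_\star \leq 1$, one gets $\eta_\star^{|B_0\cup B_1|}\leq \eta^{b+e_1/q}$.

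The inner sum over $\tau$ for fixed $A$ is then bounded by grouping indices by equivalence class $e\in\INT{n}^2$ with restricted multiplicity $m_e^A$ in $\bigcup_{t\in A}\pi_t$: a valid $\tau$ restricts to a bijection between the $\veps=\cdot$ and the $\veps=-$ indices of each class, giving at most $(m_e^A/2)!\leq k^{m_e^A/2}$ matchings. Multiplicity-one classes contribute zero (this is the constraint used above), multiplicity-two and multiplicity-three classes contribute at most one matching each (the latter forcing an extra index into $A^c$), while classes of multiplicity $\geq 4$ contribute at most $k^{m_e/2}$; these combine multiplicatively into $\sum_\tau\delta_\tau(x)\delta_\tau(y)\leq k^{m_4/2}$. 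Finally, summing the residual factor $\eta_\star^{|A^c|-|B_0\cup B_1|}$ over admissible $A\subset (B_0\cup B_1)^c$ yields a geometric series $(1+\eta_\star)^{T-|B_0\cup B_1|}\leq\exp(T\eta_\star)$, which the hypothesis $k^{q+1}\leq n^{1/4}$ bounds by an absolute constant since $T\eta_\star\leq 2k^{q+1}n^{-1/4}\leq 2$. Assembling these estimates gives the claimed inequality.

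The main obstacle is the combinatorial bound $\sum_\tau\delta_\tau(x)\delta_\tau(y)\leq k^{m_4/2}$ uniformly in $A$: the multiplicity-by-multiplicity accounting must be done carefully so that multiplicities two and three contribute only constants, rather than spurious powers of $k$. A secondary but essential subtlety is the elementary inequality $|B_0\cup B_1|\geq\tfrac{1}{2}(|B_0|+|B_1|)$, combined with the calibration $\eta_\star\leq\eta^2$, which is what converts Theorem~\ref{theorem-with-brackets}'s per-block parameter $\eta_\star$ into the cleaner exponent $b+e_1/q$ appearing in the statement.
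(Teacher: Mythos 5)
Your proposal is correct and follows essentially the same route as the paper's proof: apply Theorem~\ref{theorem-with-brackets} with $\ell=q$, expand via \eqref{eq:prodbrag}, observe that isolated blocks and blocks containing a multiplicity-one pair are forced into $A^c$ (the paper's set $T_0$, with the same bound $|T_0|\geq\max(b,e_1/q)\geq\tfrac12(b+e_1/q)$), bound the inner sum over matchings by $k^{m_4/2}$ via Gaussian moments, and sum the remaining geometric series. The recalibration $\eta_\star\leq\eta^2$ is exactly how the paper converts $\eta_\star^{t_0}$ into the stated $\eta^{b+e_1/q}$.
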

\begin{proof}
Let $\eta,\delta$ be as in Theorem \ref{theorem-with-brackets} with $\ell =q$. From \eqref{eq:prodbrag}, we have
\begin{equation}\label{eq:corWG2}
 I :=(1+\delta)^{-1} n^{k/2}\left| \EE \PAR{\prod_{t=1}^T \big[  \prod_{i \in \pi_t} U_{x_{i} y_{i}}^{\varepsilon_{i}} \big]} \right| \leq \sum_{A \subset \INT{T}} \eta^{|A|^c}  \sum_{\tau \in P^\veps_A (\pi)} \delta_\tau(x)\delta_\tau(y).
\end{equation}
From the definition of $P^\veps_A(\pi)$, we have $\delta_\tau(x)\delta_\tau(y)  = 0$ if $A$ contains a pair of odd multiplicity or if 
$A$ intersects an isolated block of $(x,y)$. We set $\INT{k}_A = \cup_{t \in A} \pi_t$. If $(G_{ij})$ and $Z$ are independent standard complex Gaussian variables,
$$
\sum_{\tau \in P^\veps_A (\pi)} \delta_\tau(x)\delta_\tau(y) \leq \sum_{\tau \in P^\veps_A} \delta_\tau(x)\delta_\tau(y) = \EE    \prod_{i  \in \INT{k}_A} G^{\veps_i}_{x_{i} y_{i}} = \prod_{(a,b) \in \INT{n}^2} \dE [ Z^{m^ {\cdot}_{ab}(A)} \bar Z^{ m^{\bar{}}_{ab}(A)}],
$$
where $m^\veps_{ab}(A)$  is the number of times that $(x_i,y_i,\veps_i) = (a,b,\veps)$ for $i \in \INT{k}_A$. If $m_{ab}$ is the multiplicity of 
$(a,b)$ in $(x,y)$, we deduce that 
$$
\EE    \prod_{i  \in \INT{k}_A} G^{\veps_i}_{x_{i} y_{i}}  \leq \prod_{(a,b) \in \INT{n}^2} \dE  |Z|^{m_{ab}},
$$
From Wick formula, for even $m$, $\dE  |Z|^{m} = (m/2) ! \leq m^{m/2}$ and $\dE |Z|^2 =1$. We deduce that
$$
\EE    \prod_{i \in \INT{k}_A} G^{\veps_i}_{x_{i} y_{i}}  \leq k^{m_4/2}.
$$

Let $T_0$ be set of $t \in T$ such that $\pi_t$ is isolated or contains a pair $(x_i,y_i)$ of multiplicity one.  We set $t_0 = |T_0|$. 
From \eqref{eq:corWG2}, we find
\begin{eqnarray*}
I & \leq & k^{m_4/2} \sum_{A \subset \INT{T}, A \cap T_0 = \emptyset} \eta^{|A^c|}  \\
& \leq & k^{m_4/2}    \sum_{s = 0}^{T-t_0} 
{T -t_0 \choose s} \eta^{t_0+s}  \\
 &= & k^{m_4/2}  \eta^{t_0} ( 1+ \eta)^{T - t_0}, 
\end{eqnarray*}
where we have upper bounded all possibilities of sets $A^c$ of size $t_0  +s$ in terms of its intersections with the set 
$T_0$ and its complement. We get, 
$$
I \leq  k^{m_4/2}   (1+ \eta)^{T} \eta^{t_0}.
$$
By assumption, $t_0  \geq \max(b , e_1/q) \geq (b + e_1/q)/2$. The statement of the corollary follows by using that, for $u,v \geq 0$,
$(1 + u)^v \leq e^{uv} \leq 1 + e^c uv$ if $uv  \leq c$. 
\end{proof}

\begin{remark}\normalfont Corollary \ref{cor:WG2} is tailored for our needs. This is not an optimal consequence of Theorem 
\ref{theorem-with-brackets}. Along the lines of the proof of Corollary \ref{cor:WG2}, it is possible to obtain sharper bounds, for example, 
by using the fact that the odd moments of a Gaussian random variable vanish. 
\end{remark}

\subsection{The orthogonal case}
\label{subsec:orthogonal-variations}

This paper focuses on random unitary matrices; however, as we claim in the introduction, the obvious variant of our results 
works precisely the same way for sequences of orthogonal groups. This subsection outlines how to adapt the above statements on the unitary group to the orthogonal case.
Firstly, there exists a counterpart of Proposition \ref{weingarten-centered}, which can be stated as follows:

\begin{proposition}\label{weingarten-centered-orthogonal}
Let $k$ be  even, $\pi = (\pi_t)_{t \in \INT{T}}$ be a partition of $\INT{k}$, and $x, y$ in $\INT{n}^k$.  
There exists a generalized Weingarten function
$\WgU_O[\pi] (p,q,n)$ such that
\begin{equation*}
\EE \PAR{\prod_{t=1}^T \big[  \prod_{i \in \pi_t} O_{x_{i} y_{i}} \big]}
=\sum_{p,q\in P_k}\delta_{p}(x)\delta_{q}(y)\WgU_O[\pi] (p,q,n).
\end{equation*}
\end{proposition}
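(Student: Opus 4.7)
The plan is to mirror exactly the proof of Proposition \ref{weingarten-centered}, substituting the classical orthogonal Weingarten formula for the unitary one and replacing the symmetric group $S_k$ (equivalently, the balanced pair partitions $P_k^\veps$) by the set $P_k$ of all pair partitions of $\INT{k}$. Since $O$ is real-valued, the bookkeeping is actually lighter: there is no conjugation symbol $\veps$ to track. Recall the classical orthogonal Weingarten formula: for $k$ even and any $x,y \in \INT{n}^k$,
$$
\EE \prod_{l=1}^k O_{x_l y_l} \;=\; \sum_{p,q \in P_k} \delta_p(x)\,\delta_q(y)\,\Wg_O(p,q,n),
$$
where $\delta_p(x)$ is $1$ iff $i \mapsto x_i$ is constant on each block of $p$ (and $0$ otherwise when $k$ is odd, since the left-hand side then vanishes).

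The first step is to apply, exactly as in the unitary case, the elementary identity
$$
\EE([X_1] \cdots [X_T]) \;=\; \sum_{A \subset \INT{T}} (-1)^{T-|A|}\,\EE_A(X_1,\ldots,X_T),
$$
with $\EE_A(X_1,\ldots,X_T) = \EE\bigl(\prod_{t \in A} X_t\bigr) \prod_{t \notin A} \EE(X_t)$, to the variables $X_t = \prod_{i \in \pi_t} O_{x_i y_i}$. This reduces the statement to analyzing each $\EE_A$ separately.

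The second step is to unfold each factor. For fixed $A \subset \INT{T}$, let $\pi_A$ denote the partition of $\INT{k}$ whose blocks are $\{\pi_t : t \notin A\}$ together with the single block $\bigcup_{t \in A}\pi_t$. Applying the classical orthogonal Weingarten formula block by block yields
$$
\EE_A(X_1,\ldots,X_T) \;=\; \sum_{p,q \in P_k} \delta_p(x)\,\delta_q(y)\,\Wg_{O,A}[\pi](p,q,n),
$$
where $\Wg_{O,A}[\pi](p,q,n)$ is defined to be $0$ unless both $p$ and $q$ leave every block of $\pi_A$ invariant (so that their restrictions $p_b,q_b$ to each block $b$ of $\pi_A$ are well-defined pair partitions), and in that case
$$
\Wg_{O,A}[\pi](p,q,n) \;=\; \prod_{b \in \pi_A} \Wg_O(p_b,q_b,n).
$$
Summing over $A$ and defining
$$
\Wg_O[\pi](p,q,n) \;=\; \sum_{A \subset \INT{T}} (-1)^{T-|A|}\,\Wg_{O,A}[\pi](p,q,n)
$$
gives the desired formula.

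There is no genuine obstacle here: the only point requiring any care is the restriction mechanism $p \mapsto (p_b)_{b \in \pi_A}$, which is well-defined precisely when $p$ respects $\pi_A$ (every pair of $p$ lies inside a single block of $\pi_A$); this is why $\Wg_{O,A}[\pi]$ is set to zero otherwise, matching the fact that the multiplicative Weingarten expansion applies only when the indices factorize along the blocks of $\pi_A$. The proof is therefore a word-for-word transcription of the unitary argument, so we omit the details.
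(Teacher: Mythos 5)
Your proof is correct and follows exactly the route the paper intends: the paper's own justification is precisely that one invokes the existence of the orthogonal Weingarten formula and then repeats the argument of Proposition \ref{weingarten-centered} verbatim, with $P_k$ in place of $P_k^\veps$ and no conjugation bookkeeping. Your write-out of the inclusion–exclusion over $A \subset \INT{T}$, the block-wise factorization along $\pi_A$, and the resulting definition of $\Wg_O[\pi]$ matches the paper's construction of $\Wg[\pi]$ in the unitary case.
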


Intriguingly, this formula seems to be simpler. Indeed, there is no need to consider conjugates, specify matchings, or balancing
conditions since the entries $O_{ij}$ of a random orthogonal matrix are all real. Its proof is based on the existence of the orthogonal Weingarten 
function and then repeats the argument of Proposition \ref{weingarten-centered}.

The subsequent estimate is the orthogonal counterpart of 
Theorem \ref{thm:centered-wg-estimate}.

\begin{theorem}\label{thm:centered-wg-estimate-orthogonal}
 Let $k$  even with $4 k^{7/2} \leq  n^{2}$, $\pi = (\pi_t)_{t \in \INT{T}}$ be a partition of $\INT{k}$. For all $p, q \in P_k$, the following estimate holds true: 
$$|\WgU_O[\pi](p,q ,n)|\le  (1 +  12 k^{7/2} n^{-1} )  n^{-k/2-|pq^{-1} |} 4^{|pq^{-1} |} ( 2 k^{7/4}n^{-1                                                                                                                                                                                                                                                                                                                                                                                                                                                                                                                                                                                                                                                                                                                                                                                                                                                                                                                                                                                                                                                                                                                                                                                                                                                                                                                                                                                                                                                                                                                                                                                                                                                                                                                                                           })^{r},$$
where $r$ is the number of blocks of  $\pi$ to which $p\vee q$ can be restricted. 
\end{theorem}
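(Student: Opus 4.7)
The plan is to mirror the proof of Theorem \ref{thm:centered-wg-estimate} at every step, taking care that orthogonal corrections come in powers of $n^{-1}$ rather than $n^{-2}$ (which explains why the error $(1+12 k^{7/2}n^{-1})$ replaces $(1+3k^{7/2}n^{-2})$, and why the per-block gain is $(2k^{7/4}n^{-1})^{r}$ instead of $(k^{7/4}n^{-1})^{r}$).

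First I would recall, or state without proof, the orthogonal analog of Theorem \ref{lemma:unitary-Wg-expansion1}: for $p,q\in P_k$,
$$
\WgU_O(p,q,n)=(-1)^{|pq^{-1}|}n^{-k/2-|pq^{-1}|}\sum_{g\ge 0}|P_O(p,q,g)|\,n^{-g},
$$
where $|pq^{-1}|$ and the sets $P_O(p,q,g)$ of orthogonal Hurwitz-type factorizations are those introduced in the Collins--Matsumoto theory. The key combinatorial input is the orthogonal counterpart of Proposition \ref{combinatorial-estimate}: $|P_O(p,q,g)|$ is bounded by $|P_O(p,q,0)|$ times a factor polynomial in $k$, raised to the power $g$; together with the orthogonal Catalan-type bound $|P_O(p,q,0)|\le 4^{|pq^{-1}|}$, this yields an orthogonal version of Corollary \ref{cor-upper-bound-unitary} of the shape
$$
|\WgU_O(p,q,n)|\le \PAR{1+Ck^{7/2}n^{-1}}\,n^{-k/2-|pq^{-1}|}\,4^{|pq^{-1}|}.
$$

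Second I would follow verbatim the argument of Proposition \ref{weingarten-centered} to obtain, via the inclusion--exclusion identity \eqref{eq:prodbracket}, the decomposition
$$
\WgU_O[\pi](p,q,n)=\sum_{A\subset\INT{T}}(-1)^{T-|A|}\WgU_{O,A}[\pi](p,q,n),
$$
with $\WgU_{O,A}[\pi](p,q,n)$ factorizing over the blocks of the coarsened partition $\pi_A$ as in \eqref{eq:WgA}. Plugging in the expansion of the first step into each factor and using multiplicativity of $|\cdot|$ and of the Hurwitz counts over blocks, one obtains, exactly as in Theorem \ref{lemma:unitary-Wg-expansion3}, an identity
$$
\WgU_O[\pi](p,q,n)=(-1)^{|pq^{-1}|}n^{-k/2-|pq^{-1}|}\sum_{g\ge 0}|P_O[\pi](p,q,g)|\,n^{-g},
$$
where $P_O[\pi](p,q,g)$ denotes the orthogonal Hurwitz factorizations which cannot be restricted to any single block of $\pi$. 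The crucial unsignedness of this sum is obtained, as in the proof of Theorem \ref{lemma:unitary-Wg-expansion3}, by applying the set-theoretic inclusion--exclusion \eqref{eq:sieve} to $S=P_O(pq^{-1},g)$ and $S_t=P_{O,\INT{T}\setminus\{t\}}[\pi](p,q,g)$.

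Finally I would conclude as in the proof of Theorem \ref{thm:centered-wg-estimate}: the non-restrictability condition forces $P_O[\pi](p,q,g)=\emptyset$ whenever $g$ is smaller than $r$, since each of the $r$ blocks of $\pi$ on which $p\vee q$ restricts needs at least one extra factor to escape that block. The bound on $|P_O(p,q,g)|$ from the first step then yields a geometric tail starting at $g=r$, whose sum is controlled by $(2k^{7/4}n^{-1})^{r}$ once $4k^{7/2}\le n^2$. The main obstacle, in my view, is purely bookkeeping: verifying that the orthogonal Hurwitz combinatorics still has the two properties used crucially in the unitary argument, namely (i) strict multiplicativity of the counts over the blocks of $\pi_A$ so that \eqref{eq:WgAex} carries over, and (ii) that a single factor in the orthogonal factorization cannot connect two different blocks of $\pi$ at cost better than $n^{-1}$. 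Both are standard in the Collins--Matsumoto framework, and once granted, the rest of the argument is a transcription of the unitary proof with $n^{-2}$ replaced throughout by $n^{-1}$.
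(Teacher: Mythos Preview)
Your proposal is correct and follows essentially the same approach as the paper. The paper itself gives no detailed proof, stating only that ``the proof is essentially the same as the proof of Theorem \ref{thm:centered-wg-estimate}'' with the main input being the orthogonal analogues of Theorem \ref{lemma:unitary-Wg-expansion1} and Proposition \ref{combinatorial-estimate} supplied by Theorems 4.6 and 4.9 of \cite{MR3680193}; your write-up is a faithful and more explicit elaboration of precisely this strategy, including the inclusion--exclusion step, the unsigned expansion, and the vanishing of the low-order terms forced by the non-restrictability condition.
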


Without entering details, the proof is essentially the same as the proof of Theorem \ref{thm:centered-wg-estimate}. The main input 
is an analogue of Theorem \ref{lemma:unitary-Wg-expansion1} and Proposition \ref{combinatorial-estimate} given in  \cite{MR3680193} by 
Theorem 4.6 and Theorem 4.9. We note that in \cite{MR3680193}, these two results are slightly more difficult to prove than their unitary counterpart. 
This yields an orthogonal version of Theorem \ref{theorem-with-brackets} and
Corollary \ref{cor:WG2}.

\section{Strong asymptotic freeness through non-backtracking operators}
\label{sec:SAFNB}

This section aims to extend \cite[Section 3]{MR4024563} to general bounded operators in Hilbert spaces. 

\subsection{Spectral mapping formulas}

We consider $(b_1, \ldots, b_\ell)$  elements in $\mathcal B(\mathcal H)$ where $\mathcal H$ is a Hilbert space. We assume that the set 
$\INT{\ell}$ is endowed with an involution $i \mapsto i^{*}$. 

The non-backtracking operator associated with the $\ell$-tuple of matrices 
$(b_1, \ldots, b_\ell)$ 
 is the operator on 
$\mathcal B(\mathcal H \otimes \dC^\ell)$ defined by
\begin{equation}
\label{eq:defBB}
B = \sum_{j \ne i^*} b_j  \otimes E_{ij},
\end{equation}
where  $E_{ij} \in M_\ell( \dR)$ are the canonical matrix elements. We also define the left non-backtracking operator as 
\begin{equation}
\label{eq:defBBl}
\widetilde B = \sum_{j \ne i^*} b_i  \otimes E_{ij}.
\end{equation}
Note that if the $b_i$'s are invertible, then $B$ and $\widetilde B$ are conjugate
$$
\widetilde B = D B D^{-1}
$$
with $D = \sum_i b_i \otimes E_{ii}$. The non-backtracking operators are used in this paper to give an alternative description of the 
spectrum of an operator of the form \eqref{eq:defA}.  We start with a result in the reverse direction. In the sequel for shorter notation, 
the identity operator in $\mathcal H$ is denoted by $1$, and if $a$ is in $\mathcal B(\mathcal H)$ and $\lambda \in \dC$, we write 
$a- \lambda$ in place of $a - \lambda 1$.

\begin{proposition}\label{prop:nonbackgeneral}
Let $B$ be as above and let  $\lambda \in \dC$ satisfy 
$\lambda^2  \notin  \{ \si(  b_i b_{i^*} ): i \in \INT{\ell} \}$. Define the operator $A^{(\lambda)}$ on $\mathcal H$  through
\begin{equation} \label{def_AM0}
A^{(\lambda)}  = b_0 ( \lambda) +  \sum_{i=1}^\ell b_i (\lambda) \,, \qquad b_i (\lambda) = \lambda  b_{i} ( \lambda^2 - b_{i^*} b_{i} )^{-1}   \AND b_0 ( \lambda) =  - 1 -  \sum_{i=1}^\ell  b_{i}(  \lambda^2 - b_{i^*} b_i )^{-1} b_{i^*}  .
\end{equation} 
Then $\lambda \in \sigma (B)$ if and only if  $0 \in \sigma ( A^{(\lambda)})$. Similarly, $\lambda \in \sigma (\widetilde B)$ if and only if  
$0 \in \sigma ( A^{(\lambda)})$. 
\end{proposition}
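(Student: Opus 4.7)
The plan is to solve the equation $(B-\lambda)f=h$ by eliminating the $\ell$ components $f_i\in\mathcal H$ of $f\in\mathcal H\otimes\dC^\ell$ so as to reduce the problem to a single scalar equation $A^{(\lambda)}g=w$ on $\mathcal H$, thereby transferring invertibility of $B-\lambda$ to that of $A^{(\lambda)}$. Writing $f=\sum_j f_j\otimes e_j$ and $h=\sum_i h_i\otimes e_i$, and introducing the auxiliary vector $g=\sum_j b_jf_j\in\mathcal H$, the $i$th component of the equation reads $g-b_{i^*}f_{i^*}-\lambda f_i=h_i$. Pairing this with its image under $i\mapsto i^*$ and eliminating $f_{i^*}$, one obtains, under the standing hypothesis (which, since $\sigma(xy)\cup\{0\}=\sigma(yx)\cup\{0\}$, also ensures that $\lambda^2-b_{i^*}b_i$ is invertible whenever $\lambda\ne 0$), the closed form
\[
f_i = (\lambda^2-b_{i^*}b_i)^{-1}\bigl[(\lambda-b_{i^*})g + b_{i^*}h_{i^*} - \lambda h_i\bigr],
\]
and substituting this back into $g=\sum_j b_jf_j$ yields the reduced equation $A^{(\lambda)}g=w$, with $w=\sum_j b_j(\lambda^2-b_{j^*}b_j)^{-1}(\lambda h_j-b_{j^*}h_{j^*})$.

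The cleanest way to package this computation is by intertwining relations. I will define bounded maps $\mathbf 1_\ell\colon\mathcal H\to\mathcal H\otimes\dC^\ell$ and $\mathbf b\colon\mathcal H\otimes\dC^\ell\to\mathcal H$ by $\mathbf 1_\ell g=\sum_i g\otimes e_i$ and $\mathbf b(\sum_i f_i\otimes e_i)=\sum_i b_if_i$, and analogously $S\colon\mathcal H\to\mathcal H\otimes\dC^\ell$ and $T\colon\mathcal H\otimes\dC^\ell\to\mathcal H$ by $(Sg)_i=(\lambda^2-b_{i^*}b_i)^{-1}(\lambda-b_{i^*})g$ and $Th=\sum_j b_j(\lambda^2-b_{j^*}b_j)^{-1}(\lambda h_j-b_{j^*}h_{j^*})$. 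A short calculation based on the algebraic identity
\[
\lambda(\lambda^2-b_{i^*}b_i)^{-1}(\lambda-b_{i^*}) + b_{i^*}(\lambda^2-b_ib_{i^*})^{-1}(\lambda-b_i) = 1,
\]
obtained by clearing denominators and using the intertwining $b_{i^*}(\lambda^2-b_ib_{i^*})^{-1}=(\lambda^2-b_{i^*}b_i)^{-1}b_{i^*}$, then yields the three key relations $(B-\lambda)S=\mathbf 1_\ell A^{(\lambda)}$, $T(B-\lambda)=A^{(\lambda)}\mathbf b$, and $\mathbf b S=T\mathbf 1_\ell=1+A^{(\lambda)}$.

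With these in hand the equivalence is immediate. If $A^{(\lambda)}$ is invertible, then $(B-\lambda)^{-1}h:=S(A^{(\lambda)})^{-1}Th + Rh$, with $(Rh)_i=-(\lambda^2-b_{i^*}b_i)^{-1}(\lambda h_i-b_{i^*}h_{i^*})$, gives a bounded two-sided inverse; conversely, if $B-\lambda$ is invertible, composing the two intertwining relations with $(B-\lambda)^{-1}$ and using $\mathbf b S=T\mathbf 1_\ell=1+A^{(\lambda)}$ gives
\[
A^{(\lambda)}\bigl(\mathbf b(B-\lambda)^{-1}\mathbf 1_\ell - 1\bigr) = \bigl(\mathbf b(B-\lambda)^{-1}\mathbf 1_\ell - 1\bigr)A^{(\lambda)} = 1,
\]
so $A^{(\lambda)}$ is invertible with an explicit bounded inverse. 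The statement for $\widetilde B$ is entirely parallel: writing $F=\sum_j f_j$, the equation $(\widetilde B-\lambda)f=h$ reads $b_i(F-f_{i^*})-\lambda f_i=h_i$, and the same elimination, with $F$ now playing the role of $g$, leads again to the same reduced operator $A^{(\lambda)}$. The hard part, though purely algebraic, will be the bookkeeping: verifying the three intertwining identities by patiently applying the key algebraic identity one block at a time, and then checking that the candidate formula really is a two-sided inverse — nothing deep but one must keep operator products in the correct order throughout.
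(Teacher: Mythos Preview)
Your argument is correct and takes a genuinely different route from the paper. The paper proceeds by case analysis over the three parts of the spectrum: it first matches eigenvectors of $B$ with null vectors of $A^{(\lambda)}$ via $u=\sum_j b_j v_j$, then upgrades this to the continuous spectrum by an approximate-eigenvector argument, and finally handles the residual spectrum by passing to the adjoint $B^*$ (which is itself a left non-backtracking operator) and invoking the point-spectrum case again. Your approach bypasses this decomposition entirely: the intertwining identities $(B-\lambda)S=\mathbf 1_\ell A^{(\lambda)}$, $T(B-\lambda)=A^{(\lambda)}\mathbf b$, and $\mathbf bS=T\mathbf 1_\ell=1+A^{(\lambda)}$ let you exhibit explicit bounded two-sided inverses in both directions, so invertibility transfers directly. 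This is cleaner and more robust (no separate treatment of residual spectrum, no adjoint computation), and it yields the resolvent formula $(B-\lambda)^{-1}=S(A^{(\lambda)})^{-1}T+R$ as a bonus; the paper's approach, on the other hand, makes the eigenvector correspondence more transparent, which is conceptually helpful later when one thinks of $B$ as linearising $A$.

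One minor remark: your parenthetical about $\sigma(xy)\cup\{0\}=\sigma(yx)\cup\{0\}$ and the restriction $\lambda\ne 0$ is unnecessary. Since $i\mapsto i^*$ is an involution on $\INT{\ell}$, the hypothesis $\lambda^2\notin\sigma(b_ib_{i^*})$ for all $i$ already gives $\lambda^2\notin\sigma(b_{i^*}b_i)$ for all $i$ by reindexing $j=i^*$, with no caveat at $\lambda=0$.
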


\begin{proof}  Let us assume that $\lambda$ is in the point spectrum of $B$, i.e. there is a non-zero vector 
$v  \in  \mathcal H \otimes \dC^\ell$ such that $B v =  \lambda v$. Let $E_i, i\in \INT{\ell}$ be the canonical basis of $\dC^\ell$.
We can write $v=\sum_{i\ }v_i\otimes E_i$.
We define a vector $u\in \mathcal H$ by
\begin{equation}\label{000}
u=\sum_{j } b_j  v_j.
\end{equation}
Let us show that 
$u$ is a non-zero vector $u\in  \mathcal H$ such that $A^{(\lambda)}  u=0$.
Component-wise, the equation $B   v =  \lambda v$ can be written
\begin{equation}\label{111}
\lambda v_i= \sum_{j \ne i^*} b_j v_j=
 u-b_{i^*} v_{i^*}.
\end{equation}
In the above equation, if we replace $i$ by $i^*$ 
we get
\begin{equation}\label{222}
\lambda v_{i^*}=u-b_i v_i.
\end{equation}
Multiplying Equation \eqref{111} by $\lambda$ and substituting the last term of its right-hand side with
Equation \eqref{222} multiplied by $b_{i^*}$, we get
$$\lambda^2 v_i=
\lambda u-b_{i^*}  u+b_{i^*}b_i v_i .$$
This can be rewritten as
$$(\lambda^2 -b_{i^*}b_i) v_i= (\lambda   -b_{i^*}) u .$$
Since $\lambda^2-a_{i^*}a_i$ is invertible, we get, for all $i\in \INT{\ell}$,
\begin{equation}\label{333}
v_i=(\lambda^2  -b_{i^*}b_i)^{-1}(\lambda  -b_{i^*})  u .
\end{equation}
If $u$ were the zero vector, then all $v_i$ would be zero. Therefore $v$ would be zero, which contradicts that $v$ is an eigenvector. 
This proves that $u$ is not zero. 

Let us now prove that $A^{(\lambda)} u=0$, i.e. its kernel is nontrivial. 
According to the definition of $A^{(\lambda)}$, we need to prove that
$$u=\sum_{i}  \PAR{ -b_{i}(  \lambda^2  - b_{i^*} b_i )^{-1} b_{i^*} + 
\lambda  b_{i} ( \lambda^2 - b_{i^*} b_{i} )^{-1} } u.$$
The right-hand side is equal to
$$\sum_{i} 
b_{i}
 (\lambda^2 -b_{i^*}b_i)^{-1}(\lambda  -b_{i^*}) u.$$
Substituting with the help of Equation \eqref{333} for each $i$, this is equal to
$\sum_{i} 
b_{i} v_i,$
which completes the claim from the definition of $u$ in \eqref{000}.

Conversely, if $0$ is in the point spectrum of $A^{(\lambda)} $ with eigenvector $u$,
we define $v_i$ with Equation \eqref{333}. However, the above computations imply that $Bv =  \lambda v$ and
$u=\sum_{j}b_jv_j$ as per the original definition of $u$, therefore $u$ would be zero, 
if $v$ is zero 
which contradicts the assumption.  We thus have proved so far that $0$ is in the discrete spectrum of $A^{(\lambda)}$ if and only if 
$\lambda$ is in the point spectrum of $B$.

The same equivalence holds for $\widetilde B$. Indeed, if $\widetilde B v = \lambda v$ then we define $u = \sum_{i } v_i$. Repeating 
the above computation, we find $v_i  = b_i ( \lambda^2  - b_i b_{i^*} )^{-1}( \lambda - b_i) u$ and $A^{(\lambda)} u = 0$. Conversely, 
if $A^{(\lambda)} u = 0$, then we define $ v = \sum_i v_i \otimes E_i$ with $v_i =  a_i ( \lambda^2  - a_i a_{i^*} )^{-1} ( \lambda - a_i) u $. 
We get $Bv = \lambda v$. Arguing as above, the equivalence follows.

Next, we handle the continuous spectrum. 
If $\lambda$ is in the continuous spectrum of $B$, then 
there exists a sequence of unit vectors $v^{(n)}\in \mathcal H \otimes \dC^\ell$ such that
$\|\lambda v^{(n)}-B v^{(n)}\|_2\to 0$ (see for example Kowalski \cite[p. 19]{kowalskiLN}) .
To each $v^{(n)}$ we associate a vector $u^{(n)}\in \mathcal H$
thanks to Equation \eqref{000}.
In the first part of the proof, all equations are continuous.
Specifically, Equations \eqref{111}-\eqref{333} remain correct if the right-hand side is perturbed additively 
by a vector $\varepsilon_n$
whose norm goes to $0$ as $n\to\infty$.
This implies that $\|A^{(\lambda)}  u^{(n)}\|_2\to 0$, i.e. $0$ is in the spectrum of $A^{(\lambda)}$.

Conversely, if $0$ is in the continuous spectrum of $A^{(\lambda)}$, let $u^{(n)}\in \mathcal H \otimes  \dC^\ell$ such that
$\|A^{(\lambda)} u^{(n)}\|_2\to 0$  as $n\to\infty$.
We define $v^{(n)}$ through equation \eqref{333}. As above, it is possible to prove that 
$\|\lambda v^{(n)}-B v^{(n)}\|_2\to 0$ as $n\to\infty$, so $\lambda$ is in the spectrum of $B$.

Finally, we handle the residual spectrum. Let $v \in \mathcal H \otimes \dC^\ell$ non-zero that is not an element in the closure of the image of 
$B - \lambda 1$, and without loss of generality
let us assume that $v$ is orthogonal to the image of $B-\lambda 1$.  Then, for all 
$x \in \mathcal H \otimes \dC^\ell$, $\langle Bx-\lambda x,v\rangle =0$.
This implies that $B^* v=\overline\lambda v$, that is, $\overline\lambda$ is in the point spectrum of $B^*$. Now, observe that 
$$
B^* = \sum_{i \ne j ^*} b^*_{j} \otimes E_{ji} = \sum_{i \ne j ^*} b^*_{i} \otimes E_{ij} = \sum_{i \ne j ^*} \tilde b_i \otimes E_{ij},
$$
with $\tilde b_i = b_i^*$. In particular, $B^*$ is a left non-backtracking operator as defined in \eqref{eq:defBBl} with underlying operators 
the $\tilde b_i$'s. From what precedes, $\overline\lambda$ is in the point spectrum of $B^*$ if and only if $0$ is in the point spectrum 
of $\widetilde A^{(\lambda)} = \tilde b_0  +  \sum_i \tilde b_i  $ defined by, for $i \in \INT{\ell}$,
$$
\tilde b_i   = \overline \lambda b^*_{i} ( \overline \lambda^2 - b^*_{i^*} b^*_{i} )^{-1}   \AND \tilde b_0  =  - 1 -  \sum_{i=1}^\ell  b^*_{i}( \overline  \lambda^2 - b^*_{i^*} b^*_i )^{-1} b^*_{i^*}  .
$$
It straightforward to check that $\tilde b_i = b_i (\lambda) ^*$ for each $i \in \INT{\ell}$. Thus, we have 
$\widetilde A^{(\lambda)} = A^{(\lambda)*} $. From what precedes, we get that $\overline\lambda$ is in the discrete spectrum of $B^*$ 
if and only if $0$ is in the discrete spectrum of $A^{(\lambda)*}$. Hence, we have proved that if $\lambda$ is in the residual spectrum 
of $B$, then $0$ is in the spectrum of $A^{(\lambda)}$ (since for any bounded operator $T$ on a Hilbert space, $\sigma(T^*) $ is the 
complex conjugate of the set $\sigma(T)$, see Reed-Simon \cite[Theorem VI.7]{MR0493421}).

Conversely, if $0$ is in the residual spectrum of $A^{(\lambda)}$, then $0$ is in the discrete spectrum of $A^{(\lambda)*}$. From what 
precedes, we get that $\overline \lambda$ is in the discrete spectrum of $B^*$. In particular, $\lambda$ is in the spectrum of $B$. 
The proposition is proved.\end{proof}

We now consider the situation where the Hilbert space $\mathcal H$ is of the form $\dC^r \otimes \mathcal K$, where $\mathcal K$ 
is a Hilbert space and  
$$
b_i=a_i\otimes V_i
$$
with $a_i \in M_r(\dC)$ and $V_i$ unitary operator on $\mathcal K$ such that for all $i \in \INT{\ell}$,
$$
V_{i^*} = V_i^*.
$$
Moreover, we assume that $\ell = 2 d$ and the involution $i^*$ is as in Section \ref{sec:main}. In this specific case, we have 
\begin{equation}\label{eq:defB0}
B = \sum_{i \ne j^*} a_j \otimes V_j \otimes E_{ij}  \AND \widetilde B = \sum_{i \ne j^*} a_i \otimes V_i \otimes E_{ij}.
\end{equation}
The operator $A^{(\lambda)}$ in \eqref{def_AM0} is given by 
\begin{equation} \label{def_AMU}
A^{(\lambda)}  = a_0 ( \lambda)  \otimes  1 +  \sum_{i=1}^{2d} a_i (\lambda) \otimes V_i
\end{equation}
with 
$$\qquad a_i (\lambda) = \lambda  a_{i} ( \lambda^2 - a_{i^*} a_{i} )^{-1}   \AND a_0 ( \lambda) =  - 1 -  \sum_{i=1}^{2d} a_{i}(  \lambda^2 - a_{i^*} a_i )^{-1} a_{i^*}.
$$

If $\mathcal K=\ell^2(X)$ with $X$ countable and $V_i$ a permutation operator, 
then Proposition \ref{prop:nonbackgeneral} recovers Proposition 9 of \cite{MR4024563} up to the minor point that
$B$ is slightly modified into $B = \sum_{j \ne i^*}   a_j\otimes S_i  \otimes E_{ij}$, but it is easy to check that
both $B$ are conjugate to each other, so they have the same spectrum. 
It follows that all results of  \cite[Section 3]{MR4024563}  can be extended to this more general setting. 

We now review two key results \cite[Section 3]{MR4024563} that will be used in the sequel. We take $\ell = 2d$. We start with a kind of converse of 
Proposition \ref{prop:nonbackgeneral}, in the sense that for a given self-adjoint operator $A$ of the form 
\begin{equation}\label{eq:defA0}
A = a_0 \otimes 1 + \sum_{i=1}^{2d} a_i \otimes V_i,
\end{equation}
satisfying the symmetry condition \eqref{eq:symA}  and a real number $\mu$, we look for a non-backtracking operator $B_\mu$ which detects if $\mu \in \sigma(A)$. To perform this, we need to introduce the resolvent of the operator $A_\star$ defined in \eqref{eq:defAfree}. For 
$\mu \notin \sigma( A_\star)$, we set
$$
G(\mu) = ( \mu - A_\star  )^{-1}.
$$
For $x,y\in \Fd$, we define $G_{xy} (\mu)  \in M_r(\dC)$ as the matrix $P_x G(\mu) P^*_y$ where $P_x$ is the projection onto the vector 
space $\dC^r \otimes \delta_x$. We denote by $o$ the neutral element of $\Fd$ for its group structure and $g_i$ the $i$-th generator of 
$\Fd$.  Finally, if $D$ is a bounded set in $\dC$, the convex hull of $D$ is denoted by $\hull( D ) $.

\begin{proposition}\label{prop:nonback2}
Let $A$ be as in \eqref{eq:defA0} satisfying \eqref{eq:symA} and $\mu \notin \hull( \sigma ( A_\star))$. Define the matrices for $i \in \INT{2d}$,
\begin{equation*} \label{def_AM}
\hat a_i (\mu) = G_{oo}  ( \mu  )^{-1} G_{o g_i} ( \mu ). 
\end{equation*} 
Let $B_\mu =  \sum_{i \ne j^*} \hat a_j (\mu)\otimes V_j \otimes E_{ij} $ be the corresponding non-backtracking operator. Then 
$\mu \notin \sigma (A)$ if and only if  
$1  \notin \sigma ( B_\mu)$. The same statement holds for the corresponding left non-backtracking operator $\widetilde B_\mu$.
\end{proposition}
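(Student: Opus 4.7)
The plan is to view Proposition \ref{prop:nonback2} as a converse of Proposition \ref{prop:nonbackgeneral}. Apply Proposition \ref{prop:nonbackgeneral} to $B_\mu$ at the spectral parameter $\lambda=1$: since $V_{i^*}V_i = 1$ on $\mathcal K$, the unitaries factor out of the brackets $\lambda^2 - b_{i^*}b_i$ appearing in \eqref{def_AM0}, so that the resulting operator takes the product form
\[
A^{(1)} = \widehat a_0 \otimes 1 + \sum_{i=1}^{2d} \widehat a_i \otimes V_i, \qquad \widehat a_i = \hat a_i(\mu)\bigl(1-\hat a_{i^*}(\mu)\hat a_i(\mu)\bigr)^{-1},
\]
with a matching expression for $\widehat a_0$. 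Proposition \ref{prop:nonback2} thus reduces to showing that $0\in\sigma(A^{(1)})$ iff $\mu\in\sigma(A)$; the same reduction works for $\widetilde B_\mu$.

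To identify $A^{(1)}$ with $\mu - A$ at the level of (approximate) kernels, I would exploit the tree structure of the Cayley graph of $\Fd$. Decompose $\ell^2(\Fd) = \dC\delta_o \oplus \bigoplus_{i=1}^{2d} \ell^2(T_i)$, where $T_i$ is the branch obtained by removing the edge $\{o,g_i\}$ on the side of $g_i$, and let $\Gamma_i(\mu)\in M_r(\dC)$ be the root-to-root entry of the Green function of the restriction $A^{(i)}_\star$ of $A_\star$ to $\dC^r\otimes\ell^2(T_i)$. A block Schur-complement computation in this decomposition yields closed expressions
\[
G_{oo}(\mu)^{-1} = \mu - a_0 - \sum_i a_{i^*}\Gamma_i(\mu) a_i, \qquad G_{o,g_i}(\mu) = G_{oo}(\mu)\,a_{i^*}\Gamma_i(\mu),
\]
together with the recursion $\Gamma_i(\mu)^{-1} = G_{oo}(\mu)^{-1} + a_i\Gamma_{i^*}(\mu) a_{i^*}$, which follows from the translation invariance of the Cayley graph. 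Combining these gives $\hat a_i(\mu) = a_{i^*}\Gamma_i(\mu)$, $(1-\hat a_{i^*}(\mu)\hat a_i(\mu))^{-1} = \Gamma_i(\mu)^{-1} G_{oo}(\mu)$ and hence $\widehat a_i = a_{i^*}G_{oo}(\mu)$, and a parallel computation simplifies $\widehat a_0$.

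With these closed forms in hand, one verifies directly that the (approximate) kernels of $A^{(1)}$ and $\mu - A$ coincide, which gives the equivalence $0\in\sigma(A^{(1)})\iff\mu\in\sigma(A)$ across point, continuous, and residual spectra, completing the proof in view of Proposition \ref{prop:nonbackgeneral}. The hypothesis $\mu\notin\hull(\sigma(A_\star))$ enters at two points: it ensures that $G_{oo}(\mu)$ and every $\Gamma_i(\mu)$ are well defined and invertible (since $A_\star$ and its compressions $A^{(i)}_\star$ are self-adjoint with spectrum contained in $\hull(\sigma(A_\star))$), and it ensures that the brackets $1-\hat a_{i^*}(\mu)\hat a_i(\mu) = G_{oo}(\mu)^{-1}\Gamma_i(\mu)$ are invertible so that Proposition \ref{prop:nonbackgeneral} applies at $\lambda=1$.

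The main obstacle is the last algebraic identification, which is somewhat intricate when $r>1$ because the matrices $a_j$ and the Green function entries $G_{oo}(\mu)$, $\Gamma_i(\mu)$ do not commute in general, so one cannot simply factor $A^{(1)}$ on the left or the right by $G_{oo}(\mu)\otimes 1$. Rather than such a naive factorization, one proceeds via the explicit vector constructions underlying the proof of Proposition \ref{prop:nonbackgeneral}: given $u \in \ker(\mu-A)$, build a candidate $v = \sum_i v_i \otimes E_i$ with $B_\mu v = v$ by inserting appropriate $\Gamma_i(\mu)$'s and $a_j$'s, and conversely. This calculation is purely formal in the matrices $a_j$ and the entries of $G(\mu)$, the unitaries $V_i$ entering only through $V_{i^*}V_i = 1$; in the special case where the $V_i$'s are permutation operators the argument was carried out in \cite[Section 3]{MR4024563}, and it transfers verbatim to our setting. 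The statement for $\widetilde B_\mu$ follows from the analogous computation.
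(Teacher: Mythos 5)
Your proposal is correct and takes essentially the same route as the paper, whose entire proof is the observation that \cite[Proposition 10]{MR4024563} and its argument carry over verbatim once Proposition \ref{prop:nonbackgeneral} is available for general unitaries $V_i$ (only $V_{i^*}V_i=1$ being used). Your reduction via $\lambda=1$, the Schur-complement identities for $G_{oo}(\mu)$, $G_{og_i}(\mu)$ and $\Gamma_i(\mu)$, and the explanation of why the convex-hull hypothesis is needed are all consistent with that argument, and in fact supply more detail than the paper itself gives.
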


This statement is \cite[Proposition 10]{MR4024563} extended for a general Hilbert space and general unitaries $V_i$'s. The same proof 
applies in this case thanks to Proposition \ref{prop:nonbackgeneral}.

\subsection{Spectral radius of non-backtracking operators}

We conclude this section with a sharp criterion to guarantee in terms of non-backtracking 
operators that the spectrum of an operator $A$ is in a neighborhood of the spectrum of the operator $A_\star$. The following result 
is \cite[Theorem 12]{MR4024563}. Again,  thanks to the improvement of Proposition \ref{prop:nonbackgeneral}, we can now state it in a 
more general context.

\begin{theorem}\label{prop:edgeAB}
Let $A$ be as in \eqref{eq:defA0} satisfying \eqref{eq:symA} and $A_\star$ the corresponding free operator defined by 
\eqref{eq:defAfree}. The following two results hold:
\begin{enumerate}[(i)]
\item
For any $\mu \notin \hull ( \sigma (A_\star) ) $, we have $\rho( (B_\star)_\mu ) < 1$, 
where $(B_\star)_\mu  =  \sum_{i \ne j^*} \hat a_j (\mu)\otimes \lambda (g_j) \otimes E_{ij} $ is the non-backtracking operator on $\Fd$ 
with weights as in Proposition \ref{prop:nonback2}.
\item
For any $\veps >0$, there exists $\delta > 0$ such that if for all real $\mu$ at distance at least $\veps$ from $\hull( \sigma (A_\star) )$,
$$
\rho(B_\mu) < \rho( (B_\star)_\mu ) + \delta, 
$$
then $\hull(\sigma(A))$ is in an $\veps$-neighbourhood of $\hull( \sigma (A_\star) )$. 
\end{enumerate}
Moreover, 
the same holds for the left non-backtracking operators. 
\end{theorem}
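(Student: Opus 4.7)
The plan is to transpose the proof of Theorem 12 in \cite{MR4024563} to this more general setting. Since Propositions \ref{prop:nonbackgeneral} and \ref{prop:nonback2} have already been extended above to arbitrary bounded operators $V_i$ on a Hilbert space (rather than only permutation operators), the argument should carry over with only minor adjustments.

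For part (i), I would start by applying Proposition \ref{prop:nonback2} to $A_\star$ itself: since $\mu \notin \hull(\sigma(A_\star))\supset \sigma(A_\star)$, we directly get $1 \notin \sigma((B_\star)_\mu)$. Upgrading this to the stronger statement $\rho((B_\star)_\mu) < 1$ requires showing that the whole unit circle, not merely the point $1$, avoids $\sigma((B_\star)_\mu)$. To this end I would apply Proposition \ref{prop:nonbackgeneral} to $(B_\star)_\mu$ itself: for $z$ on the unit circle with $z^2 \notin \sigma(\hat a_{i^*}(\mu)\hat a_i(\mu))$, the condition $z \in \sigma((B_\star)_\mu)$ is equivalent to the singularity of an auxiliary operator $A^{(z)}_\star[\mu]$, which at $z=1$ reduces (after factoring out invertible terms, as in \cite{MR4024563}) to $\mu - A_\star$, and more generally stays invertible as long as $\mu$ lies outside $\hull(\sigma(A_\star))$. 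Combining this with the decay $\hat a_i(\mu) = O(|\mu|^{-1})$ at infinity, which forces $\|(B_\star)_\mu\| \to 0$, and the fact that $\dC \setminus \hull(\sigma(A_\star))$ is connected (by convexity of the hull), a continuity/connectedness argument then rules out $\rho((B_\star)_\mu) \geq 1$ anywhere in this region.

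For part (ii), I would argue by contradiction. Suppose $\hull(\sigma(A))$ is not contained in the $\veps$-neighborhood of $\hull(\sigma(A_\star))$. The symmetry hypothesis \eqref{eq:symA} makes $A$ self-adjoint, so $\sigma(A) \subset \dR$ and there exists a real $\mu \in \sigma(A)$ at distance at least $\veps$ from $\hull(\sigma(A_\star))$. Proposition \ref{prop:nonback2} then forces $1 \in \sigma(B_\mu)$, hence $\rho(B_\mu) \geq 1$. To close the loop I would prove the uniform bound $\sup \rho((B_\star)_\mu) \leq 1 - 2\delta$ over all real $\mu$ at distance at least $\veps$ from $\hull(\sigma(A_\star))$, by combining the pointwise bound from part (i) with upper semi-continuity of the spectral radius and the vanishing of $\rho((B_\star)_\mu)$ at infinity to reduce to a compact subset. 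Choosing $\delta$ accordingly then contradicts the hypothesis $\rho(B_\mu) < \rho((B_\star)_\mu) + \delta$. The extension to the left non-backtracking operators $\widetilde B_\mu$ is covered by the parallel conclusions already built into Propositions \ref{prop:nonbackgeneral} and \ref{prop:nonback2}.

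The main obstacle is the unit-circle statement in part (i): whereas Proposition \ref{prop:nonback2} immediately places the point $1$ outside $\sigma((B_\star)_\mu)$, excluding the entire unit circle requires iterating Proposition \ref{prop:nonbackgeneral} on $(B_\star)_\mu$ and controlling the matrix-valued symbol $A^{(z)}_\star[\mu]$ jointly in $(z,\mu)$. The convexity of $\hull(\sigma(A_\star))$, together with the analytic dependence of $\hat a_i(\mu)$ and of the resulting auxiliary operators, is what ultimately pins the spectrum of $(B_\star)_\mu$ strictly inside the open unit disk.
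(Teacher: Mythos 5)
Your part (ii) is essentially the argument of the paper (which itself simply defers to \cite{MR4024563}): contradiction via Proposition \ref{prop:nonback2}, together with a uniform bound $\sup_\mu \rho((B_\star)_\mu)<1$ obtained from part (i), upper semi-continuity on a compact set of $\mu$'s, and decay at infinity. The trouble is part (i), where your route diverges from the paper's and leaves two genuine gaps. First, you propose to exclude the whole unit circle from $\sigma((B_\star)_\mu)$ by applying Proposition \ref{prop:nonbackgeneral} to $(B_\star)_\mu$ with spectral parameter $z$, and you assert that the resulting auxiliary operator $A_\star^{(z)}[\mu]$ ``stays invertible'' for $|z|=1$, $z\ne 1$, whenever $\mu\notin\hull(\sigma(A_\star))$. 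No argument is given and none is apparent: the Ihara--Bass-type identity only relates $z=1$ to the resolvent of $A_\star$ at $\mu$; for other $z$ on the circle, $A_\star^{(z)}[\mu]$ is a genuinely different operator pencil whose invertibility is essentially equivalent to the statement you are trying to prove. Second, your concluding ``continuity/connectedness'' step requires $\mu\mapsto\rho((B_\star)_\mu)$ to be continuous (equivalently, the set $\{\mu:\rho((B_\star)_\mu)>1\}$ to be open); generic upper semi-continuity of the spectrum only makes that set closed, which is the wrong direction, and lower semi-continuity fails for general bounded operators.

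The paper's proof avoids both issues by invoking two structural facts about free non-backtracking operators established in \cite{MR4024563}: (a) $\rho(B_\star)=\sup\{\lambda\ge 0:\lambda\in\sigma(B_\star)\}$, i.e.\ the spectral radius is attained at a nonnegative real point of the spectrum, and (b) the map $(b_1,\dots,b_{2d})\mapsto\rho(B_\star)$ is continuous. Granting these, if $\rho((B_\star)_{\mu_0})\ge 1$ for some $\mu_0$ outside the hull, then since $\rho((B_\star)_\mu)\to 0$ as $\mu\to\infty$ and (b) gives continuity in $\mu$, the intermediate value theorem along a path to infinity in the connected complement of the hull produces $\mu_1$ with $\rho((B_\star)_{\mu_1})=1$; by (a) and compactness of the spectrum, $1\in\sigma((B_\star)_{\mu_1})$, contradicting Proposition \ref{prop:nonback2}. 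Your proposal identifies neither (a) nor the need to actually prove (b), and these are precisely the nontrivial inputs; everything else in your sketch is routine.
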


The theorem is a simple consequence of the following two claims: 
$$
\hbox{for any $(b_1,\ldots,b_{2d})$ in $M_r(\dC)^{2d}$, } \quad  \rho(B_\star) = \sup \{ \lambda \geq 0 : \lambda \in \sigma(B_\star) \},
$$
and 
\begin{equation}\label{eq:src}
\hbox{the map from $M_r(\dC)^{2d}$ to $\dR$:  $(b_1,\ldots,b_{2d}) \mapsto \rho(B_\star)$ is continuous.}
\end{equation} 
We refer to \cite{MR4024563} for details.

\section{Expected high trace of non-backtracking matrices}
\label{sec:trace}

In this section, we prove  Theorem \ref{th:main} by computing the spectral radius of the non-backtracking matrices associated to the 
random matrix $A$ defined in \eqref{eq:defA}. 

\subsection{Main technical statement}

We now come back to the setting of Theorem \ref{th:main}. Let $U_1, \ldots,U_d$ be independent Haar-distributed random unitary 
matrices in $\Un$.  We define $V_i$ as in \eqref{eq:defVi} and its centered version $[V_i]$ as in \eqref{eq:bracket}. 

The left non-backtracking operator associated to the weights $(a_1 \otimes [V_1],\ldots,a_{2d}\otimes [V_{2d}])$ is 
\begin{equation}\label{eq:defBV2}
B = \sum_{i \ne j^*} a_i \otimes [V_i] \otimes E_{ij}.
\end{equation}
Note that we have omitted the tilde in \eqref{eq:defBBl} for shorter notation. The main technical result of this section is an upper bound 
on the spectral radius  of $B$ in terms of the spectral radius of the corresponding operator on the free group $\Fd$:
\begin{equation}\label{eq:defBstar}
B_\star = \sum_{i\ne j^*} a_i \otimes \lambda(g_i) \otimes E_{ij},
\end{equation}
where $\lambda(g)$ is the left-regular representation of $g \in \Fd$.  Let $\rho (B_\star)$ be the spectral radius of the $B_\star$. 
Recall that $\rho(B) \leq \| B^\ell \|^{1/\ell}$ for all integer $\ell \geq 1$ and the sequence $\| B^\ell \|$ is sub-multiplicative in $\ell$.

\begin{theorem}\label{th:FKB}
There exists a universal constant $c >0$ such that if $q \leq c \ln n /  \ln \ln n $, then the following holds. 
For any $\veps >0$, there exists a constant $C \geq 1$ (depending on $\veps$, $d$ and $r$) such that for any 
$(a_1,\ldots,a_{2d}) \in  M_r(\dC)$ with $\max_i \| a_i \| \leq 1$, if $\ell = \lfloor C q \rfloor$, the event
$$
 \| B^\ell \|^{1 / \ell} \leq \rho (B_\star) + \veps
$$
holds with probability at least $ 1 - C \exp \PAR{-(\ln n)^2}$.
\end{theorem}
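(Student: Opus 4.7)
The plan is to deploy the expected-high-trace (Füredi--Komlós) method. Writing $\|B^\ell\|^{2p} = \|(B^\ell (B^*)^\ell)^p\| \leq \tr((B^\ell(B^*)^\ell)^p)$ for integer $p \geq 1$, and applying Markov's inequality, the theorem reduces to an expectation bound of the form
$$
\dE \tr((B^\ell(B^*)^\ell)^p) \leq C^p (\rho(B_\star) + \veps/2)^{2\ell p},
$$
modulo a dimension-dependent prefactor that $p$ can swallow. The probability level $\exp(-(\ln n)^2)$ forces $p$ to be taken polylogarithmic in $n$, and compatibility with the validity regime $k^{q+1} \leq n^{1/4}$ of Corollary \ref{cor:WG2} applied to $k = 2\ell p q$ indices is precisely what imposes the hypothesis $q \leq c\ln n/\ln\ln n$ for a suitably small constant $c$.

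Expanding the trace yields a sum indexed by closed walks of length $2\ell p$ on the product state space $\INT{r} \times \INT{n}^q \times \INT{2d}$, with the non-backtracking constraint $i_{t+1}\neq i_t^*$ imposed within each of the $2p$ blocks of length $\ell$. Each step of the walk contributes an entry of a centered tensor $[V_{i_t}]$, and because $V_i = \bar U_i^{\otimes q_-} \otimes U_i^{\otimes q_+}$, this entry is itself a product of $q$ entries of the Haar unitary $U_{i_t}$ (or its conjugate). Factoring the expectation across the $d$ independent copies $U_1,\ldots,U_d$ and regrouping the steps that use each given $U_j$, the expectation per walk fits exactly the hypotheses of Corollary \ref{cor:WG2}: one has $k$ indices partitioned into $2\ell p$ blocks of $q$, each surrounded by a bracket. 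The corollary yields a per-walk bound featuring the factor $\eta^{b+e_1/q}$ with $\eta \sim k^{q/2} n^{-1/8}$, where $b$ counts isolated blocks and $e_1$ counts tensor-coordinate pairs of multiplicity one.

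The combinatorial core is then to classify closed non-backtracking walks according to their quotient graph, identifying vertices and edges by their collision pattern, and to count the concrete realizations in each class. The principal contribution comes from walks whose quotient is a tree traversed in a reduced, pair-matched fashion: these are in natural bijection with walks on $\Fd$ contributing to the trace of $(B_\star B_\star^*)^{\ell p}$ in the left-regular representation, which is bounded by $\rho(B_\star)^{2\ell p}$ times a polynomial factor in $\ell p$. Walks with excess (extra cycles, unusual coordinate collisions, isolated blocks) are penalized by powers of $\eta$ that, under $q \leq c \ln n /\ln\ln n$, remain strictly below one and yield the required decay once summed against the number of realizations. The main obstacle, and what distinguishes this setting from the permutation case of \cite{MR4024563}, is the double-layered combinatorics induced by the tensor structure: each edge of the walk is labelled by a $q$-tuple in $\INT{n}$ rather than by a single element, so block-level and coordinate-level collisions must be tracked simultaneously. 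Verifying that the $\eta^b$ centering gain of Corollary \ref{cor:WG2} compensates the combinatorial overcount from visiting many coordinates, and tuning $(\ell, p, c)$ so that the resulting union bound matches $\exp(-(\ln n)^2)$, will constitute the heart of the argument.
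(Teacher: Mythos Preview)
Your overall architecture is correct and matches the paper: the high-trace bound $\|B^\ell\|^{2\theta}\le\tr\bigl((B^\ell(B^{*})^{\ell})^{\theta}\bigr)$, Markov's inequality, expansion into non-backtracking paths, and the use of Corollary~\ref{cor:WG2} for the centered Weingarten contribution are exactly right, as is the identification of the parameter regime $q\le c\ln n/\ln\ln n$ with $\theta$ polylogarithmic.

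There is, however, a genuine gap in how you propose to recover the factor $\rho(B_\star)^{2\ell\theta}$. You write that tree-like walks are ``in natural bijection with walks on $\Fd$ contributing to the trace of $(B_\star B_\star^*)^{\ell\theta}$''. This is not how the argument runs, and a direct bijection of that sort does not produce what is needed: the per-path contribution is not a trace but the product of operator norms
\[
a(\gamma)=\prod_{\alpha=1}^{2\theta}\Bigl\|\prod_{t=1}^{\ell}a_{i_t^\alpha}\Bigr\|,
\]
and there is no free-group trace to which this corresponds. The paper instead separates each term as $a(\gamma)p(\gamma)$, groups paths into equivalence classes under relabelling of $\INT{n}$ and of colors, and bounds $A(\gamma)=\sum_{\gamma'\sim\gamma}a(\gamma')$ via Lemma~\ref{le:agamma}. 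That lemma is the technical heart of the proof and is what you are missing: it rests on (i) the elementary but crucial observation (Lemma~\ref{le:agamma0}) that $\sum\|a_{i_1}\cdots a_{i_k}\|^{2}\le c^{2}\rho^{2k}$ over non-backtracking sequences, which \emph{is} the link to $\rho(B_\star)$, and (ii) a delicate greedy interval-pairing algorithm on the time axis showing that, after identifying repeated edges, the sum over color assignments decomposes into $O(\chi+q\theta)$ paired intervals (each costing a constant via the $\ell^{2}$ bound) plus at most $b$ unpaired intervals (each costing an exponential $c^{k}$, but compensated by the $\eta^{b}$ from Corollary~\ref{cor:WG2}). Without this pairing, summing the norms naively over colors produces an unwanted $c^{2\ell\theta}$ that kills the estimate. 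Your sketch does not yet contain this mechanism, and the phrase ``natural bijection'' suggests you have not anticipated that matrix-valued $a_i$ (as opposed to scalar weights) make the extraction of $\rho(B_\star)$ genuinely nontrivial.
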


In the following subsection, we prove Theorem \ref{th:FKB}. In the last subsection, we deduce Theorem \ref{th:main} from 
Theorem \ref{th:FKB}.

\subsection{Proof of Theorem \ref{th:FKB}}

 In the sequel,  an entry of the matrix $[V_i]$ will be denoted by 
$$[V_i]_{x  y}  =  [\prod_{p=1}^q U^{\veps_{q}}_i]_{x_p y_p},$$ with $ x = (x_1, \ldots, x_q) \in \INT{n}^q$ and 
$y = (y_1, \ldots, y_q)\in \INT{n}^q$.  We also set 
$$
\vec E = \INT{n}^q \times \INT{2d}.
$$
In analogy with usual non-bactracking matrices, an element $e = (x,i)$ of $\vec E$ is thought as the directed edge attached to $x$ 
associated to the $i$-th unitary matrix $V_i$.   If $e = (x,i), f = (y,j) \in \vec E$, the matrix-valued entry $(e,f)$ of $B$ defined 
in \eqref{eq:defBV2} is 
\begin{equation}\label{eq:defBV20}
B_{ef} = a_i [V_i]_{xy} \IND_{i \ne j^*} \in M_r(\dC) .
\end{equation}

We start with the Weyl formula for the spectral radius. If $o$ is the unit of $F_{d}$, we set 
$$
\rho_k =  \PAR{ \max \|    B_\star^k   \varphi\otimes \delta_e \|  }^{\frac {1 }{k }},
$$
where the maximum is over all $e  = (o,i), i \in \INT{2d}$ and $\varphi \in \dC^r$ of unit Euclidean norm. From the Weyl formula, $\rho_k$ converges 
to $\rho( B_\star)$ as $k$ goes to infinity (see for example \cite[Theorem 1.3.6]{MR1074574}). In the remainder of the proof, we fix 
$\veps >0$, then there exists an integer $k_0$ large enough such that for all $k \geq k_0$,
\begin{equation}\label{eq:defrho}
\rho_k \leq \rho =  \rho (B_\star) + \veps.
\end{equation}

Let $\ell$ be an integer. We fix an integer $\theta$. To get a good probabilistic estimate, we will upper bound the 
expectation of $\NRM{ B ^ {\ell}}^{2\theta}$ for $\theta$ large enough. We write
$$
\NRM{ B ^ {\ell}}^{2\theta} = \NRM{ B ^ {\ell}  ( B ^ {\ell })^* }^{ \theta } = \NRM{ \PAR{ B ^ {\ell} (B ^ {\ell })^* }^{\theta}}.
$$
In particular, if $\tr_{\dC^r}$ denotes the 
partial
trace on $\dC^r$ for operators on $\dC^r \otimes \dC^{\vec E}$, we get
$$
\NRM{ B ^ {\ell}}^{2\theta}\leq \tr \BRA{  \tr_{\dC^r} \BRA{ \PAR{ B ^ {\ell} (B ^ {\ell })^* }^{\theta}  }}.
$$
We expand the trace in terms of the matrix-valued entries of $B$:
\begin{eqnarray*}
\tr_{\dC^r} \BRA{ \PAR{ B ^ {\ell} (B ^ {\ell })^* }^{\theta}  } &=&  \sum_{e_\alpha \in \vec E , \alpha \in \INT{2\theta}}  \prod_{\alpha=1}^{\theta} (B ^ { \ell} )_{e_{2\alpha-1} e_{2\alpha}} ((B ^ { \ell} )^ *)_{e_{2\alpha}e_{2\alpha+1}},
\end{eqnarray*}
with $e_{2\theta +1} =e_1$. We expand further and use that $(B^*)_{ef} = (B_{fe})^*$, we obtain
\begin{eqnarray*}
\tr_{\dC^r} \BRA{ \PAR{ B ^ {\ell} (B ^ {\ell })^* }^{\theta}  }  & =& \sum_{\gamma} \prod_{\alpha=1}^{2 \theta}   \prod_{t=1}^\ell B^{\veps_\alpha}_{\gamma^\alpha_{t} \gamma^\alpha_{t+1}} ,
\end{eqnarray*}
with $B_{ef}^{\veps_\alpha}$ is equal to $B_{ef}$ for odd $\alpha$ and $(B_{fe})^*$ for even $\alpha$ and the  sum is over all 
$\gamma = (\gamma^1,\ldots,\gamma^{2\theta})$, $\gamma^\alpha  = (\gamma^\alpha_1,\cdots,\gamma^\alpha_{\ell+1})$  in 
$\vec E^{\ell+1}$ with the boundary conditions, for all $\alpha \in \INT{\theta}$, \begin{equation}\label{eq:bdcond}
\gamma^{2\alpha}_1 = \gamma^{2\alpha+1}_1 \AND \gamma^{2\alpha}_{\ell+1} =  \gamma^{2\alpha-1}_{\ell+1},
\end{equation}
with the convention $\gamma^{2\theta +1} = \gamma^1$.%

 Let us write
 $\gamma^\alpha_t = (x^\alpha_t,i^\alpha_t)$, $x^\alpha_t \in \INT{n}^q$ and $i^\alpha_t \in \INT{2d}$. From \eqref{eq:defBV20}, we find
$$
\tr_{\dC^r} \BRA{ B ^ { \ell} (B ^ { \ell} )^* } =   \sum _{\gamma \in P_{\ell,\theta}} \prod_{\alpha=1}^{2\theta} \prod_{t=1}^\ell a_{i^\alpha_t}^{\veps_\alpha} [V_{i^\alpha_t}]^{\veps_\alpha}_{x^\alpha_t x^\alpha_{t+1}}    ,
$$
where $(a^{\veps_\alpha}_i,[V_{i}]_{xy}^{\veps_\alpha})$ is equal to $(a_i, [V_i]_{xy})$ or $(a_i^* ,[\bar V_i]_{xy})$ depending on the 
parity of $\alpha$ and $P_{\ell,\theta}$ is the set of all $\gamma = (\gamma^1,\ldots ,\gamma^{2\theta})$ as above which are also 
non-backtracking, that is, for all $t \in \INT{\ell}$ and $\alpha \in \INT{2\theta}$,
\begin{equation}\label{eq:NBi}
i^\alpha_{t+1} \ne {i^{\alpha}_t}^*.
\end{equation}
(Note that $[\bar V_i]_{xy} = [V_{i^*}]_{yx}$ by construction).
We finally take the expectation and find:
\begin{equation}\label{eq:tr1}
\dE \NRM{ B ^ {\ell}}^{2\theta} \leq r  \sum_{\gamma \in P_{\ell,\theta}}   a(\gamma) p(\gamma) ,
\end{equation}
where we have introduced the algebraic and probabilist weights of an element $\gamma  \in P_\ell$ defined as
$$
a(\gamma) =  \prod_{\alpha = 1}^{2\theta} \NRM{ \prod_{t=1}^\ell a^{\veps_\alpha}_{i^\alpha_t} }  \quad   \hbox{ and } \quad p(\gamma) =  \left|  \dE \BRA{ \prod_{\alpha = 1}^{2 \theta}\prod_{t=1}^\ell   [V_{i^\alpha_t}]^{\veps_\alpha}_{x^\alpha_t x^\alpha_{t+1}}} \right|.
$$

We organize the terms on the right-hand side of \eqref{eq:tr1} by introducing the following equivalence class on elements of 
$P_{\ell,\theta}$. 
We write $\gamma \sim \gamma'$ if there exist a permutation $\tau$ on $\INT{n}$ and a family of permutations $(\beta_x)_{x \in \INT{n}}$ on $\INT{2d}$ such that the image of $\gamma = (x^\alpha_{t,p},i^\alpha_{t,p})$ by these permutations is $\gamma' = ((x^\alpha_{t,p})',(i^\alpha_{t,p})')$. More precisely, if for all $\alpha \in \INT{2\theta}$, $t \in \INT{\ell}$ and $p \in \INT{q}$, $\beta_{x^\alpha_{t,p}}(i_t^\alpha) = (i_t^\alpha)' = \beta_{x^\alpha_{t+1,p}} ((i_t^\alpha )^*)^*$ and $\tau(x^\alpha_{t,p}) = (x^\alpha_{t,p})'$.  
In more combinatorial language, if 
$P_{\ell,\theta}$ is seen as a collection of labeled paths where the labels are the vertex entries in $\INT{n}$ and the edge colors in $\INT{2d}$; an equivalence class is the corresponding unlabeled path.
We will use colored graphs defined formally as follows. 

\begin{definition}[Colored edge and graph]\label{def:colorgraph}
Let $X$ be a countable set and $C$ a countable set with an involution $* : C \to C$. A {\em colored edge} $[x,i,y]  $ is an equivalence 
class of $X \times C \times X$ equipped with the equivalence relation $(x,i,y) \sim (y,i^*,x)$. A {\em colored graph} $G = (V,E)$ is the 
pair formed by a vertex set $V \subset X$ and a set of colored edges $[x,i,y]$ with $x,y \in V$. The {\em degree} of $v \in V$ is
 $\sum_{e = [x,i,y] \in E}  ( \IND ( v = x )  + \IND ( v = y) )$.
\end{definition}

We now define a colored graph naturally associated with an element $\gamma \in P_{\ell,\theta}$. We write 
$x^{\alpha}_t = (x^{\alpha}_{t,1}, \ldots,x^{\alpha}_{t,q}) \in \INT{n}^q$  and define the colored graph 
$G_\gamma = (V_\gamma,E_\gamma)$  with color set $\INT{2d} $, 
$V_\gamma = \{ x^\alpha_{t,p} : \alpha \in \INT{2\theta}, t \in \INT{\ell+1},  p \in \INT{q}\} \subset [n]$ and 
$E_\gamma = \{ [ x^\alpha_{t,p} ,i^\alpha_t, x^\alpha_{t+1,p}] :  \alpha \in \INT{2\theta}, t \in \INT{\ell} , p \in \INT{q} \}$. 
The {\em multiplicity} of an edge $e \in E_\gamma$ is defined as 
$m(e) = \sum_{t\in \INT{\ell},\alpha \in \INT{2\theta},p\in \INT{q}} \IND (  [ x^\alpha_{t,p} ,i^\alpha_t, x^\alpha_{t+1,p}] = e )$. 
If  $e = |E_\gamma|$, $e_1$ is the number of edges of multiplicity one and $e_{\geq 2}$ is the number of edges of multiplicity 
at least two, we have 
$$
e = e_1 + e_{\geq 2} \AND e_1 + 2 e_{\geq 2} \leq 2 q \ell \theta. 
$$
We deduce that
\begin{equation}\label{eq:bdee1}
e \leq q \ell\theta + e_1 /2. 
\end{equation}
Moreover, for each $p \in \INT{q}$ and $\alpha \in \INT{2\theta}$, the sequence $\gamma^\alpha_p = ((x^\alpha_{t,p},i^\alpha_{t,p},x^\alpha_{t+1,p}))_{t \in \INT{\ell}}$ is a path in $G_\gamma$. Besides, from the boundary condition \eqref{eq:bdcond}, for fixed $p \in \INT{q}$, the sequence of paths $(\gamma^\alpha_p), \alpha \in \INT{2\theta},$ are connected by their ends. It follows that the graph $G_\gamma$ has at most $q$ connected components. 
If $v =|V_\gamma|$, we deduce that
\begin{equation}\label{eq:genus0}
e - v + q \geq 0.
\end{equation}
In particular, combining the last two inequalities, for any $\gamma \in P_{\ell,\theta}$, 
\begin{equation}\label{eq:genus}
\chi = q (\ell \theta+1) + e_1/2 - v \geq 0.
\end{equation}

For integers $v, e_1$ such that \eqref{eq:genus} holds,  we denote by $P_{\ell,\theta} (v,e_1)$ the set of elements in $P_{\ell,\theta}$ 
with $v$ vertices and $e_1$ edges of multiplicity one.  Note that if $\gamma \sim \gamma'$, the number of vertices and the number of 
edges with a given multiplicity are equal. It follows that we may define $\cP_{\ell,\theta} (v,e_1)$ as the set of equivalence classes 
with $v$ vertices and $e_1$ edges of multiplicity one.  Our first lemma is a rough bound on $\cP_{\ell,\theta} (v,e_1)$. 

\begin{lemma}\label{le:Plclass}
If $\chi  =  q (\ell \theta+1) + e_1/2 - v $, we have, 
$$
|\cP_{\ell,\theta} (v,e_1) |\leq v^{q-1} ( 2 q \ell \theta) ^{ 6 \chi }.  
$$
\end{lemma}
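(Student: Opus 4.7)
The plan is to establish the bound by a Füredi--Komlós style encoding of equivalence classes. Fix $[\gamma] \in \cP_{\ell,\theta}(v,e_1)$ and choose a canonical representative as follows: traverse the positions of $\gamma$ lexicographically in $(p,\alpha,t) \in \INT{q}\times \INT{2\theta}\times \INT{\ell+1}$, relabel the vertices of $V_\gamma$ as $1,\ldots,v$ in order of first appearance during this traversal, and relabel the colors appearing at each vertex in the same order-of-appearance fashion. Under this normalization, the equivalence class corresponds bijectively to a walk on an abstract colored graph with labeled vertices in $\INT{v}$.

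Read off the normalized representative step by step and classify each transition as one of: a \emph{tree extension} (target vertex appears for the first time), a \emph{cycle closing} (new edge between already-seen vertices), a \emph{repetition} (previously used edge), or a \emph{strand-start jump} (first position of a strand $p\geq 2$). A tree extension carries no information, since the new vertex label and the new color are both fixed by canonicalization. A cycle closing requires specifying its destination, costing at most $v \le 2q\ell\theta$ options. A strand-start landing on an already-seen vertex costs at most $v$ options and occurs at most $q-1$ times, producing the factor $v^{q-1}$. By \eqref{eq:genus0}, the graph $G_\gamma$ has $c\le q$ connected components, so there are exactly $v-c$ tree extensions and at most $e-v+q$ cycle closings; by \eqref{eq:bdee1}, $e \le q\ell\theta + e_1/2$, so both of these counts are $O(\chi)$.

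The delicate point is the repetition events. The classical Füredi--Komlós device groups consecutive repetitions into maximal \emph{runs}; along a run the non-backtracking constraint together with the colored-graph structure forces each next step, except at branching vertices of the already-built subgraph, where a genuine choice among at most $2q\ell\theta$ incident edges is made. The number of such decision points across all runs is controlled by the excess $e-(v-c)$ together with the boundary jumps between segments of a strand and between strands, all of which are $O(\chi)$. Combining the contributions yields at most $v^{q-1}(2q\ell\theta)^{6\chi}$ encodings, and since the encoding is injective on equivalence classes the claimed bound follows. The principal obstacle is carrying out the repeat-run analysis rigorously so that the exponent remains $O(\chi)$ despite there being up to $2q\ell\theta-e$ individual repetition events; the constant $6$ absorbs the various bookkeeping contributions (excess edges, run starts, decision-point edge choices, and boundary markers).
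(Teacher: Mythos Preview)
Your proposal sketches a standard F\"uredi--Koml\'os encoding but leaves the central step unresolved: you yourself flag that ``the principal obstacle is carrying out the repeat-run analysis rigorously,'' and indeed the argument as written does not close. The specific problem is your claim that along a repetition run the non-backtracking constraint forces the next step except at branching vertices, with the number of such decision points controlled by the excess $e - (v-c)$. The excess bounds the number of branching vertices (or their total degree) in $G_\gamma$, but it does \emph{not} bound the number of times the path visits those vertices during repetition runs; a single high-degree vertex can be traversed many times, and each traversal would, under your scheme, constitute a decision point. Without a mechanism to amortize these visits you cannot get an $O(\chi)$ exponent.

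The paper's proof avoids this difficulty via a different decomposition. Rather than your tree-extension / cycle-closing / repetition trichotomy, it builds a spanning forest and partitions steps into \emph{first times} (new vertex, hence new tree edge), \emph{tree times} (revisiting a tree edge), and \emph{important times} (everything else). The key observation --- which has no analogue in your sketch --- is that the non-backtracking constraint forbids a first time from being immediately followed by a tree time. Hence the walk decomposes into blocks of the shape (first times)$^*$(important time)(tree times)$^*$. A block of tree times traces the unique non-backtracking path in the forest between two vertices, so it is encoded entirely by its endpoint; a block of first times is free by canonicalization. Thus the whole walk is determined by the seeds (contributing $v^{q-1}$) together with, for each important time, its position and a mark $(i_s, y_{s+1}, y_\tau, i_\tau)$ recording the step taken and the landing point of the subsequent tree run --- three quantities each bounded by $2q\ell\theta$. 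Since each edge is visited at least twice except for $e_1$ of them, the number of important times is at most $2q\ell\theta - 2f + e_1 \le 2\chi$, yielding $(2q\ell\theta)^{6\chi}$ exactly. The forest structure and the first-time/tree-time dichotomy are what make the repetition bookkeeping collapse to $O(\chi)$; your branching-vertex heuristic does not achieve this.
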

\begin{proof}
We order the set $T = \INT{q} \times \INT{2\theta} \times \INT{\ell+1} $ with the lexicographic order on the first two coordinates and 
for the last coordinate: $(p,\alpha,t) \succ (p,\alpha,t')$ if $\alpha$ is odd and $t > t'$ or if $\alpha$ is even and $t < t'$. In words, 
the natural order is reversed for the last coordinate depending on the parity of $\alpha$.

We think of an element $s= (p,\alpha,t) \in T$ as a time. We also define the set $\vec E_1 = \INT{n} \times \INT{2d}$ ordered with the 
lexicographic order.  An element $\gamma \in P_{\ell,\theta}$ can be written as the sequence 
$ (\gamma_{s})_{s \in T} \in \vec E_1^T$ with $\gamma_s = (y_s, i_s) \in \vec E_1$.  For concreteness, we may define a canonical element 
in each equivalence class as follows. We say that $\gamma \in P_{\ell,\theta}$ is {\em canonical} if $(\gamma_s)_{s \in T}$ is minimal 
for the lexicographic order in its equivalence class. For example, if $\gamma \in P_{\ell,\theta} (v,e_1)$ is canonical then 
$\gamma_{(1,1,1)} = (1,1)$, $V_\gamma = \INT{v}$ and the vertices appear for the first time in the sequence $(\gamma_s)$ in order.  
Our goal is then to give an upper bound on the number of canonical elements in $P_{\ell,\theta} (v,e_1)$.

We define $T_0\subset T$ as the set of $(p,\alpha,t)$ such that if $\alpha$ is odd $t \in \INT{\ell}$ and if $\alpha$ is even $t-1 \in \INT{\ell}$. 
For $s \in T$, we write $s+1$ for the successor of $T$ in the total order and $s-1$ for its predecessor, with the convention that $(1,1,1) - 1= 0$ 
and $(q,2\theta,1)+1 = \infty$.  The set $T_0$ is the subset of $s$ in $T$ such that $s$ and $s+1$ have the same first two coordinates. 
We explore iteratively the sequence  $(i_s,y_{s+1})$, $s \in T_0$, and we also build a growing sequence of forests $(F_s)$ (graphs without cycles). 
We define $F_0$ as the graph with no edge and vertex set $\{1 \}$. At time $s \in T_0$, if the addition to $F_{s-1}$ of the edge 
$[y_s,i_s,y_{s+1}]$ does not create a cycle, then $F_s$ is the forest spanned by $F_s$ and $[y_s,i_s,y_{s+1}]$, we then say that 
$[y_s,i_s,y_{s+1}]$ is a {\em tree edge}. Otherwise $F_s = F_{s-1}$.

We now build a partition of $T_0$. Let us say that a time $s \in T_0$ is a {\em first time} if the vertex 
$y_{s+1}$ has not been seen before.  Then,  $[y_s,i_s,y_{s+1}]$ is a tree edge that is said to be associated with $y_{s+1}$.  We call the 
vertices $y_{(p,1,1)} \in V_{\gamma}$ with $p \in \INT{q}$, the {\em seeds}.  Due to the boundary condition \eqref{eq:bdcond}, each vertex 
$y \in V_\gamma$ different from a seed has an associated tree edge. Hence, the number of tree edges, say $f$, satisfies the inequality
$$
f  \geq v - q.
$$ 
If $s$ is a first time, then the value of $y_{s+1}$ is determined by the preceding values $y_{s'+1}$, $s' < s$ and the value of the 
seeds: we have $y_{s+1} = u+1$ where $u$ is the number of distinct vertices which had been seen so far. We say that a time 
$s \in T_0$ is a {\em tree time} if  $[y_s,i_s,y_{s+1}]$ is a tree edge that has already been visited. Finally, the other times $s \in T_0$ are 
called {\em important times}. We have thus partitioned $T_0$ into first times, tree times, and important times.

Due to the non-backtracking constraint \eqref{eq:NBi}, a first time cannot be directly followed by a tree time. The sequence $\gamma$ can thus be decomposed as 
the successive repetitions of: $(i)$ a sequence of first times (possibly empty), $(ii)$ an important time or a time in $T \backslash T_0$, 
$(iii)$ a sequence of tree times (possibly empty).

We note that all edges are visited at least twice except $e_1$ of them. We deduce that the number of important times is at most 
$$
2 q \ell \theta - 2 f + e_1 \leq 2 q (\ell\theta +1) - 2 v + e_1 = 2 \chi. 
$$

We mark the important times by the vector $(i_s,y_{s+1},y_{\tau},i_\tau)$ where $\tau > s$ is the next time which is not a tree time. 
We claim that the canonical sequence $\gamma$ is 
uniquely determined by the value $y_{(p,1,1)}$ of the seeds, the positions of the important times, and their marks. Indeed, the sequence $(y_{s+1},i_{s+1}, \ldots, y_{\tau})$ is the unique non-backtracking path in $F_s$ from $y_{s+1}$ to $y_\tau$ (there is a unique non-backtracking path between two vertices of a tree). Moreover, if $\sigma \geq \tau$ is the next important time or time in $T \backslash T_0$, $(\tau, \ldots, \sigma-1)$ is a sequence of first times (if $\tau = \sigma$ this sequence is empty). It follows that if $u$ vertices had been seen by time $s$, we have $y_{\tau + k} = u+k$, for $k = 1, \cdots, \sigma- \tau$, by the minimality of canonical paths. Similarly, if $i_\tau \ne 1^* = d+1$, then $i_{\tau + k} = 1$ for  $k = 1, \cdots, \sigma- \tau$, while if $i_\tau = d+1$, then $i_{\tau + 1} = 2$, $i_{\tau + k} = 1$ for $k = 2, \cdots, \sigma- \tau$.

There are at most $2q\ell \theta$ possibilities for the position of an important time and $(2q\ell\theta )^2$ possibilities for its mark 
(there are at most $2 q \ell \theta$ edges in $G_\gamma$). In particular, the number of distinct canonical paths in $P_{\ell,\theta} (v,e_1)$ 
is at most 
$$
v^{q-1} (2 q \ell \theta )^{6 \chi},
$$
where $v^{q-1}$ accounts for the possible values of the seeds (recall that $y_{(1,1,1)} = 1$).  \end{proof}

We now estimate the probabilistic weight $p(\gamma)$. To this end, we introduce a finer partition of the set $P_{\ell,\theta}$. 
Let $\gamma  \in P_{\ell,\theta}$ with $\gamma^{\alpha}_t = (x^{\alpha}_t,i^{\alpha}_t), t \in \INT{\ell+1}$. We define a {\em bracket} as a 
colored edge $[x, i ,y]$ on $\INT{n}^q$ with color set $\INT{2d}$  (in the sense of Definition \ref{def:colorgraph}). We associate to 
$\gamma$ the sequence of brackets $([x^{\alpha}_t, i^{\alpha}_t ,x^{\alpha}_{t+1}])$, $t \in \INT{\ell}, \alpha \in \INT{2\theta}$. We say that 
a bracket $[x^{\alpha}_t, i^\alpha_t ,x^{\alpha}_{t+1}]$ of $\gamma$ is {\em isolated} if for all $p \in \INT{q}$, the colored edge of 
$G_\gamma$, $e = [x^{\alpha}_{t,p}, i^{\alpha}_t ,x^{\alpha}_{t+1,p}]$ is not visited at a different time: that is for all  
$(t',\alpha',p') \in \INT{\ell}\times \INT{2\theta}\times \INT{q}$ with $(t',\alpha') \ne (t,\alpha)$, we have 
$e \ne [x^{\alpha'}_{t',p'}, i^{\alpha'}_{t'} ,x^{\alpha'}_{t'+1,p'}]$.  For integer $b$, we denote by  $P_{\ell,\theta} ( v,e_1,b)$ the set of 
$\gamma \in P_{\ell,\theta} (v,e_1)$ with $b$ isolated brackets. 

\begin{lemma}\label{le:pgamma}
Let $\gamma \in P_{\ell,\theta} (v, e_1,b)$ and $\chi = q (\ell\theta+1) + e_1/2 - v$. If $v > q(\ell\theta+1)$ then $p(\gamma ) = 0$. Otherwise, 
there exists a universal constant $c >0$ such that 
$$
p(\gamma) \leq c n^{ -  q \ell \theta}  \eta^{b + \frac{e_1}{q}} \left( 2  q\ell \theta \right) ^{ 2\chi  },
$$
with $\eta = (c q \ell\theta)^{q/2} n^{-1/8}$.
\end{lemma}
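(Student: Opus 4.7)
The plan is to use the mutual independence of $U_1,\ldots,U_d$ to factor $p(\gamma)$ as a product of $d$ expectations---each a centered product of brackets of entries of a single $U_j$---then apply Corollary \ref{cor:WG2} to each factor and reassemble the resulting local combinatorial counts into the global invariants $b$, $e_1$, and $\chi$ of the colored graph $G_\gamma$.

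\emph{Factorization and group-wise bound.} Since $[V_j]$ and $[V_{j^*}]=[V_j]^*$ depend only on $U_j$, one has
\[
p(\gamma) \;=\; \prod_{j=1}^{d}\Bigl|\,\dE\!\!\prod_{(t,\alpha)\in\Pi_j}\!\!\Bigl[\prod_{p=1}^{q}U_j^{\varepsilon(t,\alpha,p)}{}_{\xi(t,\alpha,p),\eta(t,\alpha,p)}\Bigr]\Bigr|,
\]
where $\Pi_j=\{(t,\alpha):i^\alpha_t\in\{j,j^*\}\}$ and the $(\varepsilon,\xi,\eta)$ triples are read off from $\gamma$ (with a swap of matrix indices when $i^\alpha_t=j^*$, to rewrite $V_{j^*}=V_j^*$ in terms of entries of $U_j$ or $\overline{U}_j$). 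For each $j$, apply Corollary \ref{cor:WG2} with $T=|\Pi_j|$, $k_j=qT$, and block size exactly $q$; the hypothesis $k_j^{q+1}\leq n^{1/4}$ is secured by $k_j\leq 2q\ell\theta$ together with $q\leq c\ln n/\ln\ln n$. This gives $|\dE[\cdots]|\leq c\, n^{-k_j/2}\,\eta_j^{b_j+e_1^{(j)}/q}\,k_j^{m_4^{(j)}/2}$ with $\eta_j=ck_j^{q/2}n^{-1/8}$.

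\emph{Global assembly.} Each colored edge of $G_\gamma$ has its color in a unique pair $\{j,j^*\}$, so the local counts add: $\sum_j k_j=2q\ell\theta$, $\sum_j b_j=b$, $\sum_j e_1^{(j)}=e_1$. Bounding $\eta_j\leq\eta:=c(q\ell\theta)^{q/2}n^{-1/8}<1$ yields $\prod_j n^{-k_j/2}=n^{-q\ell\theta}$ and $\prod_j\eta_j^{b_j+e_1^{(j)}/q}\leq\eta^{b+e_1/q}$. For the $m_4$-factors, write $e_k$ for the number of colored edges of $G_\gamma$ of multiplicity exactly $k$; from $\sum_k ke_k=2q\ell\theta$, $\sum_k e_k=e$, and $v\leq e+q$ (at most $q$ connected components), a short computation gives $\sum_{k\geq 4}(k-2)e_k\leq 2q\ell\theta-2e+e_1\leq 2\chi$, and since $k-2\geq k/2$ for $k\geq 4$, $m_4=\sum_j m_4^{(j)}\leq 4\chi$. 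Using $k_j\leq 2q\ell\theta$ bounds $\prod_j k_j^{m_4^{(j)}/2}\leq(2q\ell\theta)^{2\chi}$, which, combined with the previous estimates, gives the main inequality of the lemma.

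\emph{Vanishing case $v>q(\ell\theta+1)$.} By the last sentence of Corollary \ref{cor:WG2} (inherited from Theorem \ref{theorem-with-brackets}), each group-$j$ factor vanishes unless the corresponding entry sequence $(\xi,\eta)$ is even---that is, each vertex has even left-arm and right-arm counts within the group. A careful parity analysis (tracking the $\varepsilon$-signs from both the parity of $\alpha$ in the trace expansion and the identity $V_{j^*}=V_j^*$) shows that the simultaneous evenness across all $d$ groups forces each colored edge of $G_\gamma$ to have multiplicity at least $2$, hence $e_1=0$; combined with $v\leq e+q$ and $e\leq q\ell\theta$ this yields $v\leq q(\ell\theta+1)$. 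The contrapositive gives $p(\gamma)=0$ whenever $v$ exceeds this threshold.

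\emph{Main obstacle.} The delicate step is the third one: the $q$-fold tensor structure produces $q$ parallel non-backtracking walks coupled only through the boundary conditions \eqref{eq:bdcond}, and making precise how Weingarten matching on the independent indices $x$ and $y$ within each group forces $e_1=0$ requires careful $\varepsilon$-sign bookkeeping, together with the observation that a color-$i$ entry in group $j$ can only be balanced by another occurrence of the same colored edge $[a,i,b]=[b,i^*,a]$. The remaining steps are essentially algebraic repackaging of Corollary \ref{cor:WG2} through the Euler-type inequality $v\leq e+q$.
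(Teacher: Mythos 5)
Your treatment of the main inequality is essentially the paper's argument: the paper likewise reduces to Corollary \ref{cor:WG2} (applied with $k=2q\ell\theta$; the factorization over the $d$ independent matrices is left implicit there, and you usefully make it explicit) and then proves $m_{\geq 4}\le 4\chi$ by exactly the counting you give, namely $e_1+2e_{23}+m_{\geq 4}\le 2q\ell\theta$ combined with $e\ge v-q$ from the bound on the number of connected components. That part is correct. Two cosmetic points: applying the corollary once per group produces a constant $c^{d}$ rather than a universal $c$ (harmless downstream, since it can be absorbed into the $C^{q\theta}$ factor in the proof of Theorem \ref{th:FKB}, but it should be said), and the hypothesis $k_j^{q+1}\le n^{1/4}$ of the corollary is a condition on the parameters that should be recorded as an assumption rather than asserted to follow from $q\le c\ln n/\ln\ln n$ alone (it also uses the choice of $\ell$ and $\theta$ made later).

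The vanishing case contains a genuine gap. You claim that the evenness condition (even left-arm and right-arm counts at every vertex, within every group) forces every colored edge of $G_\gamma$ to have multiplicity at least $2$, i.e.\ $e_1=0$. This is false: evenness is a condition on vertices, not on edges. The four index pairs $(a,b),(a,b'),(a',b'),(a',b)$, each of multiplicity one and with alternating $\veps$-signs, form an even sequence with nonzero Weingarten contribution, because the row-matching $p$ and the column-matching $q$ in Theorem \ref{th:unitary-Wg} need not be the same permutation; so your ``observation'' that a color-$i$ entry can only be balanced by another occurrence of the same colored edge is incorrect. More decisively, if $p(\gamma)\ne 0$ forced $e_1=0$, the factor $\eta^{e_1/q}$ in the very lemma you are proving (and in Corollary \ref{cor:WG2}) would be vacuous. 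Since your deduction $v\le e+q\le q\ell\theta+q$ relies on $e_1=0$, it does not cover paths with $e_1>0$, and the first claim is not established. The paper's argument stays at the level of vertices: each step $(p,\alpha,t)\in T_0$ has a single ``target'' vertex ($x^{\alpha}_{t+1,p}$ for $\alpha$ odd, $x^{\alpha}_{t,p}$ for $\alpha$ even), evenness forces every non-seed vertex to be the target of at least two steps, and counting the $|T_0|=2q\ell\theta$ steps gives $2(v-q)\le 2q\ell\theta$, i.e.\ $v\le q(\ell\theta+1)$, with no assumption whatsoever on edge multiplicities.
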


\begin{proof}
Let $T, T_0$ be equipped with their total order as in the proof of Lemma \ref{le:Plclass}. By Theorem \ref{theorem-with-brackets}, 
$p(\gamma) = 0$ unless the sequence $(x^{\alpha}_{t,p},x^{\alpha}_{t+1,p}), (p,\alpha,t) \in T_0$ is an even sequence. In particular, this 
last condition and \eqref{eq:NBi} imply that $2 ( v - q) \leq 2 q \ell\theta $, since for all vertices $x$ of $V_\gamma$ different from the seeds 
$(x_{(p,1,1)}), p \in \INT{q},$ we may associate at least two elements $(p,\alpha,t) \in T_0$ such that $ x= x^{\alpha}_{t+1,p}$ (for $\alpha$ odd) 
or $ x= x^{\alpha}_{t,p}$ (for $\alpha$ even). It gives the first claim.

We now prove the second claim. By Corollary \ref{cor:WG2}, applied to $k = 2 q \ell \theta$, it suffices to prove that 
\begin{equation}\label{eq:m3bd}
m_{\geq 4} \leq  4 \chi ,
\end{equation}
where $m_{\geq 4} = \sum_{e} \IND(m_e \geq 4) m_e$ is the sum of multiplicities of edges of $G_\gamma$ with multiplicity at least $4$. 
Indeed, let $e_{23}$ be the number of edges of multiplicity $2$ or $3$. We have 
$$
e_1 + 2 e_{23} + m_{\geq 4} \leq 2 q \ell \theta \AND e_1 + e_{23} + \frac{m_{\geq 4}}{ 4} \geq e \geq v - q,
$$
where $e = |E_\gamma|$ is the number of edges and where we have used \eqref{eq:genus0}. 
We cancel $e_{23}$, and find
$$
-e_1 + \frac{m_{\geq 4}} { 2} \leq 2 q \ell \theta - 2 v + 2 q. 
$$
We obtain \eqref{eq:m3bd}. \end{proof}

Our final lemma is the estimation of $\sum a(\gamma)$ where the sum is over an equivalence class. This is our main combinatorial ingredient.

\begin{lemma}\label{le:agamma}
Let $\gamma \in P_{\ell,\theta} (v, e_1,b)$, $\chi = q (\ell\theta +1) + e_1/2 - v$ and let $\rho$ and $k_0$ be as in \eqref{eq:defrho}. There exists a constant 
$c >1$ (depending on $k_0,r,d$ and $\max_i \| a_i \|$), such that 
$$
A(\gamma) = \sum_{\gamma' : \gamma'\sim \gamma} a(\gamma') \leq n^v c^{b + q\theta + \chi} \rho^{2 \ell\theta}.
$$
\end{lemma}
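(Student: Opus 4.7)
The argument splits naturally into a uniform spectral bound on the weight of each class member, and a combinatorial bound on the size of the equivalence class.

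The first step is the uniform bound $a(\gamma') \le \rho^{2\ell\theta}$ for every $\gamma' \sim \gamma$. Because the equivalence relation preserves the non-backtracking condition \eqref{eq:NBi}, for each $\alpha \in \INT{2\theta}$ the sequence $(i^{\alpha,'}_t)_{t \in \INT{\ell}}$ is non-backtracking, and the matrix product $\prod_{t=1}^\ell a^{\veps_\alpha}_{i^{\alpha,'}_t}$ appears as a single nonzero block in the expansion
$$
B_\star^\ell(\varphi \otimes \delta_o \otimes E_j) = \sum_{\substack{(i_1,\ldots,i_\ell)\text{ NB}\\ i_1 \ne j^\ast}} \bigl(a_{i_\ell}^{\veps_\alpha} \cdots a_{i_1}^{\veps_\alpha}\bigr)\varphi \otimes \delta_{g_{i_\ell}\cdots g_{i_1}} \otimes E_{i_\ell},
$$
with $j = i^{\alpha,'}_1$. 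Since $\Fd$ is free, distinct non-backtracking words produce distinct (hence orthogonal) basis elements of $\ell^2(\Fd)$, so a single summand has norm at most the norm of the whole, namely $\rho_\ell^\ell\|\varphi\| \le \rho^\ell$ by \eqref{eq:defrho} and $\ell \ge k_0$. Multiplying over the $2\theta$ values of $\alpha$ yields the uniform bound.

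The second step is to bound the equivalence class. Parametrize $\gamma' \sim \gamma$ by pairs $(\tau,(\beta_x))$, where $\tau : V_\gamma \hookrightarrow \INT{n}$ is an injection and $(\beta_x)_{x \in V_\gamma}$ is a family of permutations of $\INT{2d}$ satisfying the consistency relation $\beta_x(i)^\ast = \beta_y(i^\ast)$ for every $[x,i,y] \in E_\gamma$. Since $a(\gamma')$ is independent of $\tau$,
$$
A(\gamma) \;\le\; n^v \cdot N_c \cdot \rho^{2\ell\theta},
$$
where $N_c$ is the number of families $(\beta_x)$ producing distinct images $\gamma^\beta$, so the lemma reduces to $N_c \le c^{b+q\theta+\chi}$. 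To obtain this, I would pick a spanning forest $F$ of $G_\gamma$ (with at most $v-q$ edges, using that $G_\gamma$ has at most $q$ connected components), fix $\beta$ freely at the root of each component (at most $((2d)!)^q$ choices, absorbed into $c^{q\theta}$), and walk $F$: at each subsequent vertex, $\beta_x$ is forced on one color by the parent edge. Non-forest edges of $G_\gamma$, of which there are at most the cyclomatic number, itself bounded by $\chi$, impose further consistency constraints and contribute a factor $c^\chi$. Isolated brackets correspond to singly-visited edges whose colors vary essentially freely, giving the factor $c^b$. Crucially, only the restrictions of each $\beta_x$ to the colors actually used at $x$ affect $\gamma^\beta$, so the permutation freedom on unused colors does not multiply the count.

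The main technical obstacle is precisely this combinatorial count. A naive vertex-by-vertex estimate gives $c^v$, which is hopelessly too large: $v$ can be of order $q\ell\theta$, whereas the target exponent is only $b + q\theta + \chi$. The delicate bookkeeping is to attribute each genuine degree of freedom in $(\beta_x)$ to one of $b$ (isolated brackets), $\chi$ (cyclomatic excess), or $q\theta$ (per-component data and the boundary conditions \eqref{eq:bdcond} linking the $2\theta$ paths), and never to the vertex count. This requires carefully using that consistency along spanning-tree edges forces all but a bounded amount of permutation data per vertex, and that permutation freedom on unused colors does not generate new members of the equivalence class.
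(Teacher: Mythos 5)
Your first step (the uniform bound $a(\gamma')\le c^{2\theta}\rho^{2\ell\theta}$ via orthogonality of reduced words in $\ell^2(\Fd)$) is correct, but the overall strategy of bounding $A(\gamma)$ by $n^v\cdot N_c\cdot\max_{\gamma'}a(\gamma')$ cannot work, because the count you defer to, $N_c\le c^{b+q\theta+\chi}$, is false. Take $q=1$, $r=1$, and let $\gamma$ traverse a simple cycle $x_0\to x_1\to\cdots\to x_m=x_0$ twice, so that $v=e=m=\ell\theta$, every edge has multiplicity $2$, $e_1=b=0$ and $\chi=1$. At each vertex $x_t$ the permutation $\beta_{x_t}$ is constrained only on the two distinct colors $i_{t-1}^*$ and $i_t$, and any non-backtracking recoloring $(i_t')$ of the cycle is realized by a consistent family $(\beta_x)$ (a partial injection on two points always extends to a permutation of $\INT{2d}$). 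Hence $N_c\ge 2d(2d-1)^{m-2}$, which is exponential in $\ell\theta$, not in $b+q\theta+\chi$. Your claim that walking a spanning forest "forces" $\beta_x$ at each new vertex is the error: only the parent-edge color is forced, and every further distinct color at a vertex — in particular the colors of multiplicity-two tree edges, not just isolated brackets and cyclomatic excess — carries genuine freedom. The recoloring degrees of freedom are of order $e$, i.e.\ of order $q\ell\theta$.

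The deeper point is that no sharpened count can rescue the decomposition into $(\text{class size})\times(\text{max weight})$: in the cycle example with $a_1=a_{1^*}=1$ and $a_2=a_{2^*}=\veps$ small, one has $\max_{\gamma'}a(\gamma')=1$ (the all-$1$ coloring) while $\rho(B_\star)\to 1$ as $\veps\to 0$, so $N_c\cdot\max a(\gamma')\approx(2d-1)^{\ell\theta}\gg c^{\theta+1}\rho^{2\ell\theta}$. The true sum $\sum_{\gamma'}a(\gamma')$ is nevertheless $O(n^v\rho^{2\ell\theta})$ because the weights are wildly non-uniform over the class: this is an $\ell^2$ phenomenon. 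The paper's proof keeps the sum intact. It first shows that the set $T_\star$ of times at a vertex of degree $\ge 3$ or at a boundary vertex has size $O(\chi+q\theta)$, then proves (Claim 2 and the greedy algorithm) that the complementary intervals are either made of isolated brackets or are paired with another interval traversing the same colored edges, and finally sums over colorings interval by interval: paired intervals contribute $\sum_{\mathrm{NB}}\NRM{\prod_t a_{i_t}}^2\le c^2\rho^{2k}$ (Lemma \ref{le:agamma0}, a constant overhead per interval, which is where the sum beats the max-times-count bound), isolated intervals contribute $\sum_{\mathrm{NB}}\NRM{\prod_t a_{i_t}}\le c^k\rho^k$ with total exponential cost $c^b$, and the remaining intervals are determined by their paired partners and handled by the max bound. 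You would need to reorganize your argument along these lines — identifying paired intervals and exploiting the quadratic structure of the weights — rather than refining the combinatorial count.
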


We start with a preliminary lemma. 
\begin{lemma}\label{le:agamma0}
Let $\rho$  and $k_0$ be as in \eqref{eq:defrho}. There exists a constant $c > 1$ (depending on $k_0,d$ and $\max_i \| a_i \|$) such that for any integer 
$k \geq 1$, 
$$
\sum \NRM{\prod_{t=1}^k a_{i_t} }^2 \leq   c^2 \rho^{2k}
\AND 
\sum \NRM{\prod_{t=1}^k a_{i_t} } \leq  c^k  \rho^{k},
$$
where the sums are over all $(i_1, \cdots, i_k)$ such that $i_{t+1} \ne i_t^*$.  In particular, 
$$\max \NRM{\prod_{t=1}^k a_{i_t} } \leq c\rho^k$$ 
where the maximum is over all $(i_1, \cdots, i_k)$ such that $i_{t+1} \ne i_t^*$. 
\end{lemma}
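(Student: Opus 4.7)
The key observation is that when $B_\star$ acts on the rank-one vector $\varphi \otimes \delta_o \otimes E_j$ in $\dC^r \otimes \ell^2(\Fd) \otimes \dC^{2d}$, the product structure of the free group makes the computation essentially a transfer-operator calculation. Indeed, using $E_{ij} E_k = \delta_{jk} E_i$ and $\lambda(g_i) \delta_g = \delta_{g_i g}$, one iterates $B_\star$ and obtains
\begin{equation*}
B_\star^k (\varphi \otimes \delta_o \otimes E_j) = \sum_{\substack{i_1,\ldots,i_k \\ i_1 \ne j^*,\, i_{t+1} \ne i_t^*}} (a_{i_k} \cdots a_{i_1}) \varphi \otimes \delta_{g_{i_k}\cdots g_{i_1}} \otimes E_{i_k}.
\end{equation*}
The crucial point is that distinct non-backtracking sequences $(i_1,\ldots,i_k)$ yield distinct reduced words in $\Fd$, so the $\delta_{g_{i_k}\cdots g_{i_1}}$ are pairwise orthogonal in $\ell^2(\Fd)$. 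Consequently, taking squared norms gives an exact identity:
\begin{equation*}
\|B_\star^k (\varphi \otimes \delta_o \otimes E_j)\|^2 = \sum_{\substack{i_1,\ldots,i_k \\ i_1 \ne j^*,\, i_{t+1} \ne i_t^*}} \|a_{i_k} \cdots a_{i_1} \varphi\|^2.
\end{equation*}

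The plan is then to sum this identity over an orthonormal basis $\varphi = e_1,\ldots,e_r$ of $\dC^r$. The left-hand side is bounded by $r \rho_k^{2k} \le r \rho^{2k}$ whenever $k \ge k_0$, by the definition \eqref{eq:defrho} of $\rho_k$ and $k_0$. The right-hand side becomes $\sum \|a_{i_k}\cdots a_{i_1}\|_{HS}^2$, and since the operator norm is dominated by the Hilbert--Schmidt norm on $M_r(\dC)$, this yields the bound for non-backtracking tuples with the extra constraint $i_1 \ne j^*$. To remove this last constraint, I sum over $j \in \INT{2d}$: each non-backtracking sequence $(i_1,\ldots,i_k)$ satisfies $i_1 \ne j^*$ for exactly $2d-1$ values of $j$, so one loses only a factor $\frac{2d}{2d-1}\le 2$. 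This gives the first inequality for $k \ge k_0$; the range $k < k_0$ is absorbed by the crude estimate $\|a_{i_k}\cdots a_{i_1}\| \le M^k$ with $M = \max_i \|a_i\|$ at the cost of inflating the constant $c$ to depend on $k_0, d, M$ (but not on $k$).

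The second inequality follows from the first by Cauchy--Schwarz: the number of non-backtracking tuples of length $k$ is at most $2d(2d-1)^{k-1} \le (2d)^k$, so
\begin{equation*}
\sum \|a_{i_k}\cdots a_{i_1}\| \le (2d)^{k/2} \Bigl(\sum \|a_{i_k}\cdots a_{i_1}\|^2\Bigr)^{1/2} \le (2d)^{k/2} \cdot c_1 \rho^k,
\end{equation*}
and enlarging the constant so that $c^k \ge c_1 (2d)^{k/2}$ for every $k \ge 1$ (which is possible since this amounts to $c \ge c_1^{1/k}(2d)^{1/2}$ and the right-hand side is decreasing in $k$, so it suffices that $c \ge c_1(2d)^{1/2}$) yields the second inequality. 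The third (maximum) bound is immediate from the first, since $\max X^2 \le \sum X^2$.

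The only subtle point I expect is the bookkeeping for the small-$k$ regime $k < k_0$ and the absorption of all dimension-dependent factors into a single constant $c$ depending on $k_0$, $r$, $d$ and $\max_i \|a_i\|$; the heart of the argument — orthogonality of translates in the free group converting the Weyl-type spectral bound into a summed matrix-norm inequality — is otherwise direct.
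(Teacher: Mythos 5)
Your proof is correct and follows essentially the same route as the paper: identify $\|B_\star^k(\varphi\otimes\delta_o\otimes E_j)\|^2$ with a sum of $\|a_{i_k}\cdots a_{i_1}\varphi\|^2$ over non-backtracking tuples via orthogonality of reduced words in $\Fd$, pass from operator norm to Hilbert--Schmidt norm by summing (the paper averages $\varphi$ over the sphere, which is equivalent), and deduce the second and third bounds by Cauchy--Schwarz and by $\max\le\sum$. The only cosmetic differences are your bookkeeping of the boundary constraint ($i_1\ne j^*$ removed by summing over $j$, versus the paper's $i_1=i$ with a $(2d-1)$ multiplicity), and both proofs share the same unstated dependence of $c$ on $r$, which the paper itself acknowledges only in the downstream Lemma.
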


\begin{proof}
Let $k \geq 1$ be an integer. As above Proposition \ref{prop:nonback2}, for 
$e,f \in \Fd \times \INT{2d}$, we denote by $(B^k_\star)_{ef} \in M_r(\dC)$ the matrix $P_e B^k _\star P_f$ with $P_f$ the orthogonal projection onto 
$\dC^r \otimes \delta_f$.  
Let $e = (o,i)$ and $f = (g,j) $ be in $\Fd \times \INT{2d}$ where $g = g_{i_1}\cdots g_{i_m}$ is written in reduced form (that is, for all $t\in \INT{m-1}$, $i_{t+1} \ne i_t^*$). If $m = k$,  $i_1 = i$ and $j \ne i^*_{m}$, from the definition of $B_\star$, we  have 
$$
(B_\star^k)_{e f} =  \prod_{t=1}^k a_{i_t}.
$$
Otherwise, $m \ne k$, $i_1 \ne i$ or $j = i^*_{m}$ and we find $(B_\star^k)_{e f}  = 0$. For $\varphi \in \dC^r$, we deduce that
$$
\| B_\star ^k \varphi \otimes \delta_e \|^2 = (2 d - 1) \sum \| \prod_{t=1}^k a_{i_t} \varphi \|^2, 
$$
where the sum  is over all $(i_1, \cdots, i_k)$ such that $i_1 = i$ and $i_{t+1} \ne i_t^*$.  Recall that $\rho_{k}$ was defined above \eqref{eq:defrho}. Taking the supremum over all $\varphi$ of unit norm, we get
$$
\rho_{k}^{2k} = (2d-1) \cdot \max_{i \in \INT{2d}, \varphi \in S^{r-1}} \sum \NRM{\prod_{t=1}^k a_{i_t} \varphi }^2 ,
$$
where the sum  is over all $(i_1, \cdots, i_k)$ such that $i_1 = i$ and $i_{t+1} \ne i_t^*$. Observe that if $T \in M_r(\dC)$ is a matrix with singular values $\| T \| = s_1 \geq \ldots \geq s_r \geq 0$ and right singular vectors $u_1, \ldots , u_r$, then 
$$
\| T \varphi \|^2 = \sum_{i=1}^r s_i ^2 | \langle u_i , \varphi \rangle |^2. 
$$
Hence, if $\varphi$ is uniformly distributed on $S^{r-1}$, we find 
$$
\dE \| T \varphi \|^2 = \frac 1 r \sum_{i=1}^r s_i ^2 \geq \frac{\| T \|^2 }{r} . 
$$
In particular, from what precedes, 
$$
\rho_{k}^{2k} \geq \frac{(2d-1)}{r} \max_{i \in \INT{2d}} \sum \NRM{\prod_{t=1}^k a_{i_t} }^2 .
$$
We find from \eqref{eq:defrho} that for all $k \geq k_0$,
$$
\sum \NRM{\prod_{t=1}^k a_{i_t} }^2 \leq  \frac{2d r }{2d-1} \rho^{2k},
$$ 
where the sum  is over all $(i_1, \cdots, i_k)$ such that and $i_{t+1} \ne i_t^*$.
Up to a large multiplicative constant on the right-hand side (depending on $\max_i \|a_i\|$, $k_0, r, d$), this last inequality is valid for all $k \geq 1$. This is the first claimed statement. 
For the second statement, we apply Cauchy-Schwarz inequality and get:
$$
\sum \NRM{\prod_{t=1}^k a_{i_t} } \leq \PAR{2d (2d-1)^{k-1}}^{\frac 1 2}   \PAR{\sum \NRM{\prod_{t=1}^k a_{i_t} }^2 }^{\frac 1 2}. 
$$
Modifying the constant $c$, we obtain the second statement. The last statement is an immediate consequence of the first statement.
\end{proof}

We may now prove Lemma \ref{le:agamma}. 
\begin{proof}[Proof of Lemma \ref{le:agamma}] We start by introducing a new decomposition of a path $\gamma \in P_\gamma$. This decomposition is tailored to use the three bounds given by Lemma \ref{le:agamma0} but keeping in mind that for each use of Lemma \ref{le:agamma0}, there is a cost of a constant factor $c$ for the first and third bounds and an exponential cost $c^k$ for the second bound. The conclusion of Lemma \ref{le:agamma} will ultimately follow from an application of the first and last inequality $O(\chi + q \theta)$ times and the second $O(b)$ times.

As usual,  we write $\gamma \in P_{\ell,\theta}$  with $\gamma^{\alpha}_t = (x^{\alpha}_t, i^{\alpha}_t) \in \INT{n}^q \times \INT{2d}$, 
$x^{\alpha}_t = (x^{\alpha}_{t,p})_{p \in \INT{q}}$. Let $T = \INT{2\theta} \times \INT{\ell}$ and let $T_{\geq 3}$ be the set of $ (\alpha,t) \in T$ 
such that for some $p \in \INT{q}$, $x^\alpha_{t,p}$  has a degree at least $3$ in $G_\gamma$ (recall that the degree of a vertex was defined in 
Definition \ref{def:colorgraph}). We let $T_1$ be the set of $(\alpha,t) \in T$ such that $x^\alpha_{t,p}  \in  \{ x^{\alpha'}_{t',p'} :  \alpha'\in \INT{2\theta}, t' \in \{1,\ell+1\}, p' \in \INT{q} \}$. In words, $T_1$ is the set of indices $(\alpha,t)$ such that $x^\alpha_{t,p}$ reaches a boundary vertex for some $p \in \INT{q}$. We note that, in particular, $(\alpha,1) \in T_1$. 

We finally set $T_\star = T _{\geq 3} \cup T_1 $.  We write $l  = |T_\star |$, 
$T_\star = \{ (\alpha,t_j^\alpha) : j \in \INT{l_\alpha} \}$ with $\sum_\alpha l_\alpha=l$, $(t_j^\alpha)_j$ increasing with $t^\alpha_1 = 1$ and by convention we set  $t^\alpha_{l_\alpha+1} = \ell+1$.We write $a(\gamma)$ 
defined below \eqref{eq:tr1} as
\begin{equation}\label{eq:ag10}
 a(\gamma)  =  \prod_{\alpha =1}^{2\theta} \NRM{  \prod_{j = 1}^{l_{\alpha} }  \prod_{t=t^\alpha_j}^{t^ \alpha_{j+1} -1 } a^{\veps_\alpha}_{ {i^{\alpha}_{t}} } }.
\end{equation}

We start with an upper bound on $|T_{\star}| $. Similarly to \eqref{eq:m3bd}, let $m_{\geq 3} = \sum_e \IND(m_e \geq 3) m_e$ be the sum of multiplicities at least $3$, i.e. $m_{ \geq 3}$ is the number of indices $(\alpha,t,p) \in \INT{2\theta} \times \INT{\ell} \times \INT{q}$ where the path visits an edge visited at least $3$ times. Let $v_k$ be the number of 
vertices of $V_\gamma$ with degree $k$ and $d_{\ge 3} = \sum_{k \geq 3} k v_k$.  By definition, $d_{\geq 3}$ is the number of edges neighboring a vertex of degree at least $3$.  Observe that if $(\alpha,t) \in T_{\geq 3}$, then there exists $p \in \INT{q}$ such that $x^{\alpha}_{t,p} $ has degree at least $3$ and the edge $[x^{\alpha}_{t,p} ,i^\alpha_t , x^{\alpha}_{t+1,p}]$ either is visited for the first or second time or is visited for the third time or more. Similarly, if $(\alpha,t) \in T_1 \backslash T_{\geq 3}$, then there exists $p \in \INT{q}$ such that $x^{\alpha}_{t,p} $ is a boundary vertex of degree  $1$ or $2$ and the edge $[x^{\alpha}_{t,p} ,i^\alpha_t , x^{\alpha}_{t+1,p}]$ either is visited for the first or second time or is visited for the third time or more. Since there are at most $2 q \theta$ boundary vertices,
we get the inequality 
\begin{equation}\label{eq:T3T3}
|T_{\star}| = |T_{\geq 3} \cup T_1| \leq 2  d_{\geq 3} +  m_{\geq 3} + 4 \times 2 q \theta.
\end{equation}
Arguing as in  \eqref{eq:m3bd}, we claim that $m_{\geq 3} \leq 6 \chi$. Indeed, if $e_2$ is the number of edges of multiplicity $2$, we get
$$
e_1 + 2 e_2 + m_{\geq 3} = 2 q \ell \theta \AND e_1 + e_2 + \frac{m_{\geq 3}}{3} \geq e \geq v - q, 
$$
where the last inequality is \eqref{eq:genus0}. Cancelling $e_2$, we get 
$$
-e_1 + \frac{m_{\geq 3} }{3}  \leq 2q \ell \theta - 2e.  
$$
Hence $m_{\geq 3} / 3 \leq 2q \ell \theta - 2v + 2 q + e_1 = 2 \chi$, as claimed.

We may also similarly estimate $d_{\geq 3}$. We write
$$
v_1 + v_2 + \frac{d_{\geq 3}}{3} \geq \sum_k v_k =  v \AND v_1 + 2 v_2 +  d_{\geq 3} = \sum_{k} k v_k = 2 e,
$$
where $e = |E_\gamma|$ is the number of edges. Canceling $v_2$, we find 
\begin{equation}\label{eq:sge3}
\frac{d_{\geq 3} }{3} \leq 2 ( e - v) + v_1 \leq  2 (q \ell \theta   + e_1/ 2 - v)  + 2 q \theta = 2 \chi + 2 q(\theta-1),
\end{equation}
where, in the second inequality, we have used \eqref{eq:bdee1} and the bound $ v_1 \leq 2q \theta $ is a consequence of the assumption \eqref{eq:NBi}: only the vertices 
$\gamma^{2 \alpha}_{1,p} = \gamma^{2\alpha+1}_{1,p} $ and $\gamma^{2\alpha}_{\ell+1,p} = \gamma^{2\alpha +1}_{\ell+1,p}$ with 
$p \in \INT{q}$ can possibly be of degree $1$ in $G_\gamma$.  Therefore, we deduce from \eqref{eq:T3T3} that
\begin{equation}\label{eq:boundl}
l \leq 18 \chi +20 q\theta.
\end{equation}

We now proceed to the bound of the factors in \eqref{eq:ag10} when we sum over all $\gamma' \sim \gamma$. We perform this by building a partition of the factors in \eqref{eq:ag10} and then summing within each block of the partition over all $\gamma' \sim \gamma$. 
Consider the color set $\{-1,1\}$ equipped with the trivial involution $c^* = c$. We then define a colored graph, say $\Gamma$, on the vertex set 
$T = \INT{2\theta} \times \INT{\ell}$ and on the color set $C$, by placing an edge $[(\alpha,t),1,(\alpha',t')]$ between $(\alpha,t) \ne (\alpha',t')$ with 
color $1$ if there exists $(p,p')$ in $\INT{q}$ such that 
$(x^{\alpha}_{t,p},i^{\alpha}_{t},x^{\alpha}_{t+1,p}) = (x^{\alpha'}_{t',p'},i^{\alpha'}_{t'},x^{\alpha'}_{t'+1,p'})$ and the edge 
$[(\alpha,t),-1,(\alpha',t')]$ with color $-1$ if 
$(x^{\alpha}_{t,p},i^{\alpha}_{t},x^{\alpha}_{t+1,p}) = (x^{\alpha'}_{t'+1,p'},{i^{\alpha'}_{t'}}^*,x^{\alpha'}_{t',p'})$. 
In words, we place an edge between two elements of $T$ if they share a common edge in $G_\gamma$, the color encoding the orientation. In particular,  by definition:

{\em Claim 1. }  $(\alpha,t) \in T$ has degree $0$ in $\Gamma$ iff $[x^{\alpha}_{t},i^{\alpha}_{t},x^{\alpha}_{t+1}]$ is an isolated bracket.

Summing over all possibilities for $\gamma'$, we find from \eqref{eq:ag10} that 
\begin{equation}
\label{eq:ag101}
I = \sum_{\gamma' : \gamma'\sim \gamma} a(\gamma')  \leq (2d -1)^{\theta} n^v \sum \prod_{\alpha= 1}^{2\theta} \NRM{  \prod_{j = 1}^{l_{\alpha} }  \prod_{t=t^{\alpha}_j}^{t^\alpha_{j+1} -1 } a_{ {i^{\alpha}_{t}} } },
\end{equation}
where the sum is over all $(i^{\alpha}_t)_{(\alpha,t) \in T}$ in $\INT{2d}$ such that for all $(\alpha,t), (\alpha',t')$: 
\begin{equation}\label{eq:sumindex}
\hbox{ $i^{\alpha}_{t+1} \ne {i^{\alpha}_t}^*$, $i^{\alpha}_{t} = i^{\alpha'}_{t'}$ if $[(\alpha,t),1,(\alpha',t')] \in \Gamma$ and 
$i^{\alpha}_{t} =(i^{\alpha'}_{t'})^*$ if $[(\alpha,t),-1,(\alpha',t')] \in \Gamma$.}
\end{equation}
The factor $(2d-1)^\theta$ in \eqref{eq:ag101} comes from the choices of $i^\alpha_{\ell+1}$.

For ease of notation, for $r = (\alpha,t)$ and $s = (\alpha , t')$ with $t < t'$ in $\INT{\ell+1}$ and $\alpha \in \INT{2\theta}$, we will denote 
by $[r,s)$ the sequence $((\alpha,t),(\alpha,t+1), \cdots, (\alpha,t' - 1))$ and write $u \in [r,s)$ if $u = (\alpha,t'')$ with $t \leq t'' < t'$. 
Such sequence $[r,s)$ will be called an {\em interval} of {\em length} $t'-t$. If $[r_1,s_1)$ and $[r_2,s_2)$ are two intervals, 
$r_i = (\alpha_i , t_i)$, $s_i = (\alpha_i , t'_i)$ we say that $[r_1,s_1)$ and $[r'_2,s'_2)$ are {\em $1$-paired} if they have the same length 
and, for all $0\leq \delta < t'_i - t_i$, either: $[ (\alpha_1,t_1 + \delta) , 1 ,(\alpha_2,t_2+\delta)]$ is an edge of $\Gamma$ or 
$[ (\alpha_1,t_1 + \delta) , -1 ,(\alpha_2,t'_2-1-\delta)]$ is an edge of $\Gamma$, see Figure \ref{fig:1-1}. We say that there are {\em $2$-paired} if there exists a 
third interval which is $1$-paired to both of them. Iteratively, we define {\em $k$-paired} intervals. Finally, we say that two intervals 
are {\em paired} if they are $k$-paired for some $k \geq 1$. The key property is that if two intervals $[r_1,s_1)$ and $[r_2,s_2)$ are paired then, 
with the above notation, either 
\begin{equation}\label{eq:pairing}
\hbox{for all $0\leq \delta < t'_i - t_i$: } i^{\alpha_1}_{t_1 + \delta} = i^{\alpha_2}_{t_2 + \delta} \quad  \hbox{ or } \quad \hbox{for all $0\leq \delta < t'_i - t_i$: }  i^{\alpha_1}_{t_1 + \delta} = (i^{\alpha_2}_{t'_2 - \delta-1})^*.
\end{equation}

\begin{figure}
\begin{center}
\begin{tikzpicture}[xscale=0.8]
\draw[-,cyan,ultra thick] (0,0) -- (4,0) ;

\draw (0.5,0) node[above]{$a$} ; 

\draw (1.5,0) node[above]{$b$} ; 

\draw (2.5,0) node[above]{$c$} ; 

\draw (3.5,0) node[above]{$d$} ;

\draw (0,0) node[left] {$t_{1}$};
\draw (4,0) node[right] {$t'_{1}$};

\foreach \x in  {0, ...,4} 
\draw[shift={(\x,0)},color=black] (0pt,3pt) -- (0pt,-3pt) ;

\draw (0.5,-1) node[above]{$a$} ; 

\draw (1.5,-1) node[above]{$b$} ; 

\draw (2.5,-1) node[above]{$c$} ; 

\draw (3.5,-1) node[above]{$d$} ;

\draw (0,-1) node[left] {$t_{2}$};
\draw (4,-1) node[right] {$t'_{2}$};

\draw[-,cyan,ultra thick] (0,-1) -- (4,-1) ; 

\foreach \x in  {0, ...,4} 
\draw[shift={(\x,-1)},color=black] (0pt,3pt) -- (0pt,-3pt) ;

\draw (0.5,-2) node[above]{$d^*$} ; 

\draw (1.5,-2) node[above]{$c^*$} ; 

\draw (2.5,-2) node[above]{$b^*$} ; 

\draw (3.5,-2) node[above]{$a^*$} ;

\draw (0,-2) node[left] {$t_{3}$};
\draw (4,-2) node[right] {$t'_{3}$};

\draw[-,cyan,ultra thick] (0,-2) -- (4,-2) ; 

\foreach \x in  {0, ...,4} 
\draw[shift={(\x,-2)},color=black] (0pt,3pt) -- (0pt,-3pt) ;

\end{tikzpicture}
\caption{Paired intervals of length $4$. The letters on the lines represent the different brackets that are equal. The first two are paired with color $1$, and the third with color $-1$. \label{fig:1-1}}
\end{center}
\end{figure}
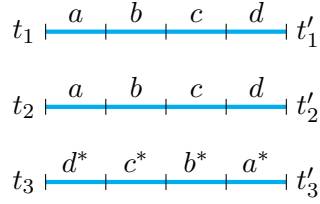

Our second claim asserts that the intervals $[t^{\alpha}_j,t^{\alpha}_{j+1})$ are paired unless they are made of isolated brackets.

{\em Claim 2. }  For any $(\alpha,j)$, the degree of  $(\alpha,t) \in T$ in $\Gamma$ is constant for all $t \in [t^{\alpha}_j,t^{\alpha}_{j+1})$, including the degree counted by colors. Moreover, if the degree is at least one, then the interval $ [t^{\alpha}_j,t^{\alpha}_{j+1})$ is paired to a subinterval of $ [t^{\alpha'}_{j'},t^{\alpha'}_{j'+1})$ for some $(\alpha',j') \ne (\alpha,j)$.

Let us check this claim. Assume that $t\in  [t^{\alpha}_j,t^{\alpha}_{j+1})$ and assume that $(\alpha,t)$ has a colored edges with color $1$ in $\Gamma$. Then, there exists $ (\alpha',t') \ne (\alpha,t)$ in $T$ and $(p,p') \in \INT{q}^2$ such that  $(x^{\alpha}_{t,p},i^{\alpha}_{t},x^{\alpha}_{t+1,p}) = (x^{\alpha'}_{t',p'},i^{\alpha'}_{t'},x^{\alpha'}_{t'+1,p'})$. If $ t+1 \in [t^{\alpha}_j,t^{\alpha}_{j+1})$ then $x^{\alpha}_{t+1,p} = x^{\alpha'}_{t'+1,p}$ is not a boundary vertex and has degree exactly $2$. Hence, $t+1 , t'+1 \le \ell$, $(\alpha',t'+1) \notin T_\star$ and from \eqref{eq:NBi}, the two adjacent edges of $x^{\alpha}_{t+1,p} = x^{\alpha'}_{t'+1,p}$ are $[x^{\alpha}_{t,p}, i^{\alpha}_{t,p}, x^{\alpha}_{t+1,p}] = [x^{\alpha'}_{t',p'}, i^{\alpha}_{t',p'}, x^{\alpha}_{t'+1,p'}]$ and $[x^{\alpha}_{t+1,p}, i^{\alpha}_{t+1,p}, x^{\alpha}_{t+2,p}] = [x^{\alpha'}_{t'+1,p'}, i^{\alpha}_{t'+1,p'}, x^{\alpha}_{t'+2,p'}]$ (the last equality follows from the constraint that the degree of $x^{\alpha}_{t+1,p}$ is exactly $2$). We thus have proved that if $ t+1 \in [t^{\alpha}_j,t^{\alpha}_{j+1})$ then $ (\alpha,t+1)$ shares a colored edge with color $1$ in $\Gamma$ with $(\alpha',t'+1)$ and that $((\alpha,t),(\alpha,t+1))$ is paired with $((\alpha',t'),(\alpha',t'+1))$. Similarly, if $ t- 1 \in   [t^{\alpha}_j,t^{\alpha}_{j+1})$ then $x^{\alpha}_{t,p} = x^{\alpha'}_{t',p}$ is not a boundary vertex and has degree exactly $2$. We deduce as above that $ (\alpha,t-1)$ shares a colored edge with color $1$ in $\Gamma$ with $ (\alpha',t'-1)$ and that $((\alpha,t-1),(\alpha,t))$ is paired with $((\alpha',s'-1),(\alpha',s'))$. The same argument applies to edges of color $-1$. By induction, this proves the claim.

In view of Claim 2, our goal is then to find paired subintervals of $[t^{\alpha}_j,t^{\alpha}_{j+1})$  of long length in \eqref{eq:ag101} and then use Lemma \ref{le:agamma0} 
for each of them. More precisely, we claim that there exists a partition of the set $T$ made of $m$ intervals $[r_i,r_{i+1}),  i \in \INT{m},$ with 
$m \leq 8 l$ and a classification of the intervals $[r_i,r_{i+1})$ into  $3$ types. An interval $[r_i,r_{i+1})$  is of type (1) iif all $u \in [r_i,r_{i+1})$ 
are of degree $0$ in $\Gamma$. There is a matching of intervals of type (2) such that two matched intervals of type (2) are paired together (by 
`matching', we mean a pair partition, we use the word matching to make a distinction between `matched intervals' and `paired intervals' defined 
above). Finally, any $(\alpha,t) \in T$ in an interval of type (3) is connected in $\Gamma$ to an element in an interval of type (2). 

Let us first assume the existence of such partition and conclude the proof of Lemma \ref{le:agamma}. For $c \in \{1,3\}$, we let $m_c $ be 
the number of intervals of type (c) and $k_{c,j}$ be the length of the $j$-th interval. Let $2m_2$ be the number of intervals of type (2) and 
$k_{2,j}$  be the common length of the two $j$-th paired intervals. We get from \eqref{eq:ag101} and \eqref{eq:pairing}
\begin{align*}
I  &\leq  (2d)^\theta n^v \prod_{j=1}^{m_1} \BRA{ \sum_{\substack{1 \leq t \leq k_{1,j} \\ i_{t+1} \ne i^*_t  }} \NRM{  \prod_{t = 1}^{k_{1,j}}  a_{ {i_{t}} } }}\prod_{j=1}^{m_2} \BRA{ \sum_{\substack{1 \leq t \leq k_{2,j} \\ i_{t+1} \ne i^*_t  }} \NRM{  \prod_{t = 1}^{k_{2,j}}  a_{ {i_{t}} } }^2}  \prod_{j=1}^{m_3} \BRA{\max_{\substack{1 \leq t \leq k_{3,j} \\ i_{t+1} \ne i^*_t  }}  \NRM{  \prod_{t = 1}^{k_{3,j}}  a_{ {i_{t}} } }  } \\
&= (2d)^\theta n^v \prod_{c=1}^3 \prod_{j=1}^{m_c} I_{c,j}.
\end{align*}
We note that to obtain the above expression, we have used Cauchy-Schwarz inequality for intervals of type (2) associated with the color $-1$: 
\begin{eqnarray*}
\sum_{\substack{1 \leq t \leq k \\ i_{t+1} \ne i^*_t  }} \NRM{  \prod_{t = 1}^{k}  a_{ {i_{t}} } }\NRM{  \prod_{t = 1}^{k}  a_{ {i^*_{k-t+1}} } } \leq \sqrt{\sum_{\substack{1 \leq t \leq k \\ i_{t+1} \ne i^*_t  }} \NRM{  \prod_{t = 1}^{k}  a_{ {i_{t}} } }^2 } \sqrt{ \sum_{\substack{1 \leq t \leq k \\ i_{t+1} \ne i^*_t  }}\NRM{  \prod_{t = 1}^{k}  a_{ {i^*_{k-t+1}} } }^2}  =  \sum_{\substack{1 \leq t \leq k \\ i_{t+1} \ne i^*_t  }} \NRM{  \prod_{t = 1}^{k}  a_{ {i_{t}} } }^2. 
\end{eqnarray*}
Also, to get the above bound on $I$,  we have used that for an interval of type (3), the $i^\alpha_t$'s associated with the indices $(\alpha,t)$ in the interval are determined by the  $i^\alpha_t$'s associated the indices $(\alpha,t)$ in intervals of type (2).

Then, by Lemma \ref{le:agamma0}, we have 
$I_{1,j} \leq c^{k_{1,j}}\rho^{k_{1,j}}$, $I_{2,j} \leq  c^{2}\rho^{2 k_{2,j}}$ and $I_{3,j} \leq  c \rho^{k_{3,j}}$. So finally, we deduce that 
$$
I  \leq (2d)^\theta n^v c^{b} c^{m}   \rho^{2 \ell\theta}.
$$
We are have used that $2 m_2 + m_3 \leq m$, $\sum_j k_{1,j} = b$ (by Claim 1) and $\sum k_{1,j} + 2 \sum k_{2,j} + \sum k_{3,j} = 2 \ell\theta $. Thus, 
from the assumption $m \leq 8  l$ and from \eqref{eq:boundl}, up to modifying the constant $c$, we deduce that the conclusion of 
Lemma \ref{le:agamma} holds provided the existence of the partition $[r_j,r_{j+1}), j \in \INT{m}$, of $T$.

The rest of the proof is devoted to the construction of the partition. To that end, we define a natural greedy pairing algorithm.  At a given step 
$i \geq 0 $ of the algorithm, there will be a partition of $T$ into intervals  $[r^{(i)}_j,r^{(i)}_{j+1})$.  Some intervals of the current partition will be 
said to be {\em active}, and the others are {\em frozen} (frozen intervals will remain unchanged in all subsequent partitions). The frozen intervals 
have been classified into types; the active intervals have not.  The algorithm stops when all intervals are frozen. At step $i=0$, the initial partition 
is $[t^{\alpha}_j,t^{\alpha}_{j+1})$, $\alpha \in \INT{2\theta}, j \in \INT{l_\alpha},$ and all intervals are active.  At each step of the algorithm, 
the number of active intervals decreases until there is none. Thus, the algorithm stops after at most $l$ steps.

We have the following induction hypothesis: each active interval $[r^{(i)}_j ,r^{(i)}_{j+1})$ is contained in  $[t^\alpha_{j'},t^{\alpha}_{j'+1})$ for some 
$\alpha \in \INT{2m}$ 
and $j' \in \INT{l_\alpha}$. Moreover, any $(\alpha,t)$ in a frozen interval of type (3) is connected in $\Gamma$ to an element in a frozen interval 
of type (2).   We now describe the induction step.  We pick the longest active sequence, say $[r^{(i)}_{j_0} ,r^{(i)}_{j_0+1})$. There are several 
cases to consider: 
\begin{enumerate}[(i)]
\item {\em All $u \in T$ with $u \in [r^{(i)}_{j_0} ,r^{(i)}_{j_0+1})$ have common degree $0$}. Then, we leave the partition unchanged, the interval 
$[r^{(i)}_{j_0} ,r^{(i)}_{j_0+1})$ is classified as type $(1)$ and becomes frozen. 
\item \label{lbii}{\em All $u \in T$ with $u \in [r^{(i)}_{j_0} ,r^{(i)}_{j_0+1})$ have common degree at least $1$, then we pick an interval $[s,s')$ 
such that $[s,s')$ and $[r^{(i)}_{j_0} ,r^{(i)}_{j_0+1})$ are $1$-paired (its existence is guaranteed by the induction hypothesis and Claim 2). The interval 
$[s,s')$ intersects a finite number, say $\alpha$, of intervals, $\cI = \{  [r^{(i)}_{j_1  + k - 1 },r^{(i)}_{j_1 + k}), k \in \INT{ \alpha} \}$.  We assume  
that the corresponding color is $1$.
} There are further subcases to consider:

\item[(ii-a)]{\em The intervals in $\cI$ are all active and $\cI$ does not contain $[r^{(i)}_{j_0} ,r^{(i)}_{j_0+1})$.}  Then, we merge the intervals in 
$\cI$ into $3$ intervals $[r^{(i)}_{j_1}, s)$, $[s,s')$ and $ [s',r^{(i)}_{j_1 + \alpha})$ (the two extreme intervals may be empty). The two extreme 
intervals are active. The intervals $[s,s')$ and $ [r^{(i)}_{j_0} ,r^{(i)}_{j_0+1})$ are frozen, they are classified as type (2), and they are matched 
together. The rest of the partition remains unchanged at step $i+1$. Note that if $\alpha = 1$, then the two extreme intervals are empty since we 
have picked the longest interval.

\item[(ii-b)]{\em The intervals in $\cI$ are all active and $\cI$ contains $[r^{(i)}_{j_0} ,r^{(i)}_{j_0+1})$.} We then have that $\alpha  \geq 2$ (the case 
$\alpha = 1$ is ruled out by the assumption that the corresponding color is $1$) and $[r^{(i)}_{j_0} ,r^{(i)}_{j_0+1})$ is one of the two extreme intervals 
of $\cI$. Assume, for example, that $j_1 = j_0$. We denote by $k$ and $k_0$ the lengths of $[r^{(i)}_{j_0} ,s )$ and $[r^{(i)}_{j_0} ,r^{(i)}_{j_0+1})$. 
Then, $1 \leq k \leq k_0$ and $h =k_0+k$ is the total length of $[r^{(i)}_{j_1} ,s')$. By construction if  $[(\alpha,t),(\alpha,t'))$ and 
$[(\alpha,t+  k),(\alpha,t'+ k))$ are contained $[r^{(i)}_{j_1} ,s')$ then they are paired.  By induction, we deduce that for any integer $a \geq 1$, if  $[(\alpha,t),(\alpha,t'))$ and 
$[(\alpha,t + a k),(\alpha,t'+ a k))$ are contained $[r^{(i)}_{j_1} ,s')$ then they are paired. 

We let $k'  = a k$ where $a \geq 1$ is the largest integer 
such that $a k \leq h/2$. We have $k'\geq h/4$ since otherwise, $k ' + k \leq 2 k' < h/2$. Assume, for example, that $h/4 \leq k' < h/3$. Let $r =h  - 3 k'$. 
We divide the sequence $(0,\ldots,h-1)$, into $(p k',\ldots,p k' + r-1)$ for $p \in \{0,1,2,3\}$ and $(qk' + r , \ldots, (q +1) k'-1)$, for $q \in \{0,1,2\}$. 
Mapping the interval $[r^{(i)}_{j_1} ,s')$ into the sequence $(0,\ldots, h-1)$, we have split  $[r^{(i)}_{j_1} ,r^{(i)}_{j_1+\alpha})$ into $7$ frozen intervals 
at step $i+1$ (see Figure \ref{fig:7} for an illustration). The two intervals corresponding to $p  \in \{0,1\}$ are of type $(2)$ and are matched together, similarly for the two intervals associated 
with $p \in \{2,3\}$. Finally, the three intervals corresponding to $q \in \{0,1,2\}$  are mutually paired; two of them are classified as type (2) and the 
remaining one as type (3).  The interval $[s',r^{(i)}_{j_1 + \alpha})$ becomes an active interval (if not empty). The rest of the partition remains 
unchanged at step $i+1$. The case  $h/3 \leq k' \leq  h/2$ is treated similarly with $5$ intervals, $3$ of length $r$ and $2$ of length $k' -r$ 
where $r = h - 2k'$ (if $r = 0$, $2$ intervals are sufficient), see Figure \ref{fig:7}.

\begin{figure}
\begin{center}
\begin{tikzpicture}[xscale=0.8]
\draw[-,cyan,ultra thick] (0,0) -- (20,0) ; 

\foreach \x in  {0,6,12,18} 
\draw (\x+0.5,0) node[above]{$a$} ; 

\foreach \x in  {0,6,12,18} 
\draw (\x+1.5,0) node[above]{$b$} ; 

\foreach \x in  {0,6,12} 
\draw (\x+2.5,0) node[above]{$c$} ; 

\foreach \x in  {0,6,12} 
\draw (\x+3.5,0) node[above]{$d$} ; 

\foreach \x in  {0,6,12} 
\draw (\x+4.5,0) node[above]{$e$} ; 

\foreach \x in  {0,6,12} 
\draw (\x+5.5,0) node[above]{$f$} ; 

\foreach \x in  {0, ...,20} 
\draw[shift={(\x,0)},color=black] (0pt,3pt) -- (0pt,-3pt) node[below] {$\x$};

\foreach \x in  {0, ...,20} 
\draw[shift={(\x,0.5)},color=black] (0pt,3pt) -- (0pt,-3pt) ;

\foreach \x in {0,6,12,18}
 \draw[shift={(\x,0)},-,color = black ,ultra thick] (-0.0,0.5) -- (1.0,0.5) ; 
 
\foreach \x in {0,6,12}
 \draw[shift={(\x,0)},-,color = black ,ultra thick] (2,0.5) -- (5.0,0.5) ; 

\draw (0,1) node {$r_{j_0}^{(i)}$};

\draw (6,1) node {$s$};

\draw (14,1) node {$r_{j_0+1}^{(i)}$};

\draw (20,1) node {$s'$};

\draw[-,cyan,ultra thick] (0,-3) -- (20,-3) ; 

\foreach \x in  {0,3,6,9,12,15,18} 
\draw (\x+0.5,-3) node[above]{$a$} ; 

\foreach \x in  {0,3,6,9,12,15,18} 
\draw (\x+1.5,-3) node[above]{$b$} ; 

\foreach \x in   {0,3,6,9,12,15} 
\draw (\x+2.5,-3) node[above]{$c$} ;

\foreach \x in  {0, ...,20} 
\draw[shift={(\x,-3)},color=black] (0pt,3pt) -- (0pt,-3pt) node[below] {$\x$};

\foreach \x in  {0, ...,20} 
\draw[shift={(\x,-2.5)},color=black] (0pt,3pt) -- (0pt,-3pt) ;

\foreach \x in {0,9,18}
 \draw[shift={(\x,-3)},-,color = black ,ultra thick] (-0.0,0.5) -- (1.0,0.5) ; 
 
\foreach \x in {0,9}
 \draw[shift={(\x,-3)},-,color = black ,ultra thick] (2,0.5) -- (8.0,0.5) ; 

\draw (0,-2) node {$r_{j_0}^{(i)}$};

\draw (3,-2) node {$s$};

\draw (17,-2) node {$r_{j_0+1}^{(i)}$};

\draw (20,-2) node {$s'$};

\end{tikzpicture}
\caption{Above : The splitting into $7$ intervals in the case $k=6$, $k_0 = 14$, $h = 20$, $k' = 6$, $r = 2$. The letters on the line represent the different brackets. Below : the splitting into $5$ intervals in the case $k=3$, $k_0 = 17$, $h=20$, $k' = 9$, $r=2$. \label{fig:7}}
\end{center}
\end{figure}
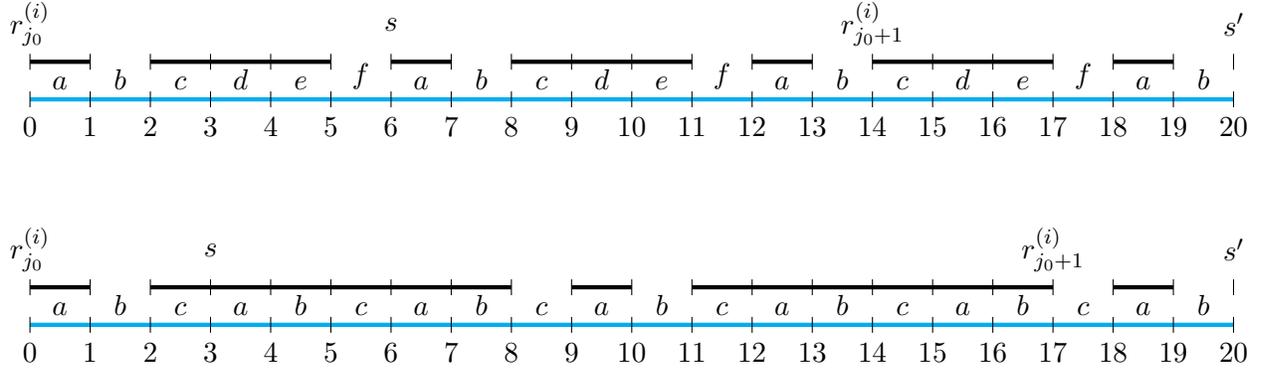

\item[(ii-c)]{\em The intervals in $\cI$ are all frozen.} Note that by the induction hypothesis, the intervals of $ \cI$ are of type (2) or (3), and all 
elements in $\cI$ are connected in $\Gamma$ to an element in a frozen interval of type (2). The interval $[r^{(i)}_{j_0} ,r^{(i)}_{j_0+1})$ becomes 
a frozen interval of type (3). The rest of the partition remains unchanged at step $i+1$. 

\item[(ii-d)]{\em The set $\cI$ contains both active and frozen intervals.} We split $\cI$ into at most three-set of contiguous intervals, say 
$\cI_1, \ldots, \cI_{k}$ with $k \in \{2,3\}$, such that $\cI_1$ contains only active intervals and the one or two others contain only frozen intervals. 
This is possible since, at each step, we have chosen the longest active interval: it follows that the union of frozen intervals is a union of disconnected 
intervals such that each of these disconnected intervals has a length larger or equal than the longest active interval.  We also split 
$[r^{(i)}_{j_0} ,r^{(i)}_{j_0+1})$ into two or three intervals, $J_1, \ldots, J_k$, such that for all $p \in \INT{k}$, $J_p$ is paired with $I_p$ which 
is contained in the union of the intervals in $\cI_p$. For each $p$, we may apply one of the above steps.

\item \label{lbiii}{\em As in \eqref{lbii} but the corresponding color is $-1$.} Up to minor modifications on the indices, the same splitting strategy \eqref{lbii} also works for 
color $-1$.

\end{enumerate}

At each step, the number of active intervals decreases. It follows that the algorithm stops after at most $l$ steps with a partition and a classification 
by types with all desired properties. Since $[r^{(i)}_{j_0} ,r^{(i)}_{j_0+1})$ is split into at most $7+1 = 8$ parts, the total number of intervals 
$m$ is bounded by $8l$. This concludes the proof of Lemma \ref{le:agamma}.
\end{proof}

All ingredients are gathered to prove Theorem \ref{th:FKB}. 

\begin{proof}[Proof of Theorem \ref{th:FKB}]
We fix $\veps >0$. For some small $\delta>0$ to be fixed later on, we set $$\theta = \lceil n^{\delta/q} \rceil.$$
We start by assuming that (for $n \geq 3$),
\begin{equation}\label{eq:choixql}
q \leq \frac{1}{2^7} \frac{\ln n}{\ln \ln n} \AND C_0 q \leq \ell \leq  2C_0 q  
\end{equation}
for some constant $C_0$ to be also fixed. In this proof, we write $c$ for a universal constant and $C$ for a constant which depends on $d$, 
$\veps$ and $\max_i \| a_i\|$. 
  
From \eqref{eq:tr1}, with $A(\gamma)$ as in Lemma \ref{le:agamma} and $\cP_{\ell,\theta} (v,e_1)$ as in Lemma \ref{le:Plclass}, we find 
\begin{equation*}
\dE \PAR{ \| B^{\ell} \|^{2\theta} } \leq  \sum_{v =1}^{q(\ell\theta +1)} \sum_{e_1 = 0}^\infty | \cP_{\ell,\theta} (v,e_1)| \max_{\gamma  \in \cP_{\ell,\theta}(v,e_1)}  p(\gamma)A(\gamma),
\end{equation*}
where we have used, by Lemma \ref{le:pgamma} that $p(\gamma) = 0$ unless $v \leq q (\ell\theta +1)$. Set $\chi = q (\ell \theta +1) +e_1/2 - v$. 
By Lemma \ref{le:pgamma} and Lemma \ref{le:agamma}, we have, 
\begin{equation}\label{eq:pa00}
 p(\gamma)A(\gamma) \leq  n^{ -  q \ell \theta}  \PAR{ (c q \ell \theta)^{q/2} n^{-1/8}}^{b + \frac{e_1}{q}} \left( c  q\ell \theta \right) ^{ 2\chi  } n^v C^{b + q\theta + \chi} \rho^{2 \ell\theta},
\end{equation}
where $b$ is the number of isolated brackets of $\gamma$.  For our choice of parameters in \eqref{eq:choixql}, for any constant $c >0$, we have 
for all $n$ large enough: $(cq)^q \leq n^{1/2^7}$. It follows that, if $\delta$ is small enough, for all $n$ large enough, 
$$
 (c q \ell \theta)^{q/2} n^{-1/8} \leq n^{1/2^7 + \delta/2 -1/8} \leq n^{-1/10}.
$$
We deduce that for all $n$ large enough, the factor in $b$ in \eqref{eq:pa00} is bounded by $1$. Therefore, for all $n$ large enough, we have
$$
 \max_{\gamma  \in \cP_{\ell,\theta}(v,e_1)}  p(\gamma)A(\gamma) \leq   n^{ -  q \ell \theta } n^{-\frac{e_1}{10q}}  \left( c  q \ell \theta  \right) ^{ 2\chi  } n^v C^{ q\theta + \chi} \rho^{2 \ell\theta}.
$$
We set $r = q (\ell \theta +1) -v$ and use Lemma \ref{le:Plclass}. Since $v \leq 2 q \ell \theta$, we obtain, for some new constant $C >0$,
\begin{equation*}
\dE \PAR{ \| B^{\ell} \|^{2\theta} } \leq \rho^{2 \ell\theta }  C^{q\theta} ( (q \ell \theta)^9  n)^{ q} \sum_{r=0}^{\infty} \left( \frac{  ( C q \ell \theta )^8}{ n}\right)^r \sum_{e_1 = 0}^\infty  \left( \frac{    (C q\ell \theta) ^{4 } } {n^{\frac{ 1}{10q}}} \right)^{e_1}.
\end{equation*}
For our choices of parameters in \eqref{eq:choixql}, for all $n$ large enough, 
$$
(C q\ell \theta) ^{4 q} \leq  (2C C_0 q^2 \theta )^{4q} \leq n^{1/2^4 + 4 \delta}.
$$
This last expression is bounded by $n^{1/12}$ for $\delta$ small enough. In particular, the above geometric series in $e_1$ converges and
$$
\sum_{e_1 = 0}^\infty  \left( \frac{    (C q\ell \theta) ^{4 } } {n^{\frac{ 1}{10q}}} \right)^{e_1} \leq \sum_{e_1 = 0}^\infty  n^{-\frac{ e_1 }{60q} } \leq \frac{1}{1 - n^{-\frac{1}{60q}}} \leq  2.
$$
for all $n$ large enough. Similarly, if $\delta$ is small enough, the series in $r$ converges and is bounded by $2$. We fix such choice of $\delta >0$; we deduce that for all $n$ large enough, 
$$
\dE \PAR{ \| B^{\ell} \|^{2\theta} }  \leq 4 \rho^{2 \ell\theta }  C^{q\theta} ( (q \ell \theta)^9  n)^{ q}. 
$$
We fix the constant $C_0$ in \eqref{eq:choixql} such that $C^{q\theta/(2\ell\theta)} \leq C^{1/(2C_0)} \leq 1+\veps$. We note that 
$$
n^{q/(2\ell \theta)} \leq \exp \PAR{\frac{\ln n}{2C_0 n^{\delta/q}}  }. 
$$
If we further assume that $q \leq \max (\delta/2 , 2^{-7}) \ln n / (\ln \ln n)$, we get that $n^\theta \geq n^{\delta / q} \geq (\ln n)^2$.  It follows that for these choices of parameters, for all $n$ large enough, 
$$
4 ( (q \ell \theta)^9  n)^{ q} \leq ( 1+ \veps)^{2\ell \theta}.  
$$
We thus have checked that 
$$
\dE \PAR{ \| B^{\ell} \|^{2\theta} }  \leq \PAR{ \rho (1+\veps)^2  }^{2 \ell \theta}. 
$$
We have $\rho(B_\star) \leq \| B_\star\| \leq C$ for some $ C >0$. Also, from \eqref{eq:defrho}, we have $\rho \leq \rho(B_\star) + \veps$. We get $$\dE \PAR{ \| B^{\ell} \|^{2\theta} } \leq (\rho(B_\star)  + \veps')^{2\ell \theta},$$ 
where $\veps'$ can be taken arbitrarily small if $\veps$ is small enough. From Markov inequality, we obtain that 
$$
\dP \PAR{ \| B^{\ell} \|^{1/\ell} \geq (1+\veps')(\rho(B_\star)  + \veps') }\leq (1+\veps')^{-2\ell\theta}.
$$ 
We easily obtain the required statement by adjusting the value of $\veps$ and the constants. 
\end{proof}

\subsection{Proof of Theorem \ref{th:main}}

The orthogonal projection of $A$ defined in \eqref{eq:defA} onto $H_r^\perp$ can be written as
\begin{equation}\label{eq:def[A]}
[A] = a_0 \otimes 1 + \sum_{i=1}^{2d} a_i \otimes [V_i]. 
\end{equation}
We fix $\veps >0$ and let $\delta = \delta(\veps)$ be as in Theorem \ref{prop:edgeAB}(ii). In view of Theorem \ref{prop:edgeAB}, the event 
$$
\BRA{ \| A_{|H_r^\perp} \| \geq \| A_\star \| + \veps }
$$
is contained in the event 
$$
\cE_\veps  = \bigcup_{a = (a_1,\ldots a_{2d}) \in S^{2d}_\veps} \cE_{\delta}(a) ,
$$
where 
$$
\cE_\delta (a) = \{  \rho(B) \geq \rho(B_\star) + \delta \},
$$ with $B = B(a)$ is an in \eqref{eq:defBV2}, $B_\star(a) = B_\star$ is as in \eqref{eq:defBstar} and 
$$S_\veps = \{ b \in M_r(\dC) : \| b \| \leq \veps^{-1} \}.$$ 
To prove Theorem \ref{th:main} it is thus sufficient to check that for any $\veps >0$, for all $n$ large enough,
\begin{equation}\label{eq:tdb0}
\dP ( \cE_\veps) \leq n^{-2}. 
\end{equation}

To prove \eqref{eq:tdb0}, we need to use a net on $S_\veps^{2d}$. A similar argument appears in \cite{MR4024563}.  
Due to the lack of uniform continuity of spectral radii, we perform the net 
argument with operator norms. From \eqref{eq:defBV2}, for $a \in M_r(\dC)^{2d}$, the map $a \mapsto B(a)$ is linear and
$ \| B(a ) \| \leq (2d-1) \| a \|
$, 
where 
$$
\| a \| = \sum_{i =1}^{2d} \| a_i   \|. 
$$
Let $\ell = \lfloor C q \rfloor$ be as in Theorem \ref{th:FKB}. The map $a \mapsto B^\ell (a)$ satisfies a deviation inequality 
\begin{eqnarray}
\| B^{\ell} (a ) - B^{\ell} (a') \| & \leq &   \ell  \max( \| B (a) \| , \|B (a') \|  )^{\ell-1} \| B (a - a') \| \nonumber \\
&\leq  &\ell (2d-1)^\ell \max( \| a \| , \| a' \| ) ^{\ell-1} \| a - a' \|. \label{eq:LIPB}
\end{eqnarray}

For a given $\eta >0$, the net $N_\eta$ of $S_\veps^{2d}$ is built as follows. First, since all matrix norms are equivalent and 
$M_r( \dC) \simeq \dR^{2r^2}$, there exists a subset $N^1 _{\eta} \subset \{ b \in M_r(\dC) : \| b\| \leq \veps^{-1}\}$ of cardinality 
at most $( c / (\veps \eta) )^{2r^2}$ such that for 
any $b \in S_\veps$, there exists $b' \in N_\eta$ with $\| b - b' \| \leq \eta$ (the constant $c$ depends 
on $r$). We set $N_\eta = (N^1_\eta)^{2d}$.  From \eqref{eq:LIPB}, for some new constant $c >0$ (depending on $\veps,r,d$), for any 
$a \in S_\veps$, there exists $a' \in N_\eta$ such that
$$
\| B^{\ell} (a ) - B^{\ell} (a') \| \leq  \ell (2d-1) \PAR{\frac{ 2d-1}{ \veps}}^{\ell -1}  \eta \leq c^\ell \eta. 
$$

Besides, from \eqref{eq:src}, for all $\eta \leq \eta_0$ small enough, 
\begin{equation}\label{eq:contrhoB}
\left| \rho(B_\star(a)) - \rho(B_\star (a')) \right|\leq \frac \delta  3,
\end{equation}
If $\eta=\min(\eta_0, (\delta / 3 c)^\ell )$ and $\mathcal E_{\delta / 3} (a') $ does not hold, we deduce that
\begin{eqnarray*}
 \| B^{\ell} (a) \| &  \leq &    \| B^{\ell} (a')\| +  \| B^{\ell} (a ) - B^{\ell} (a') \| \\
& < & \PAR{ \rho(B_\star (a')) + \frac \delta 3 }^\ell +  \PAR{\frac \delta 3 }^\ell \\
& < & \PAR{ \rho(B_\star (a')) +\frac {2\delta} 3 }^\ell \\
& < & \PAR{ \rho(B_\star (a)) +\delta }^\ell,
\end{eqnarray*}
where we have used \eqref{eq:contrhoB} at the last line. 
We find that, for our choice of $\eta$,
$$
\cE_\veps = \bigcup_{a \in S_\veps^{2d}} \mathcal E_\delta (a) \subset \bigcup_{a \in N_\eta} \mathcal E _{\delta / 3} (a) ,
$$
and, for some $c_1 >0$ (depending on $\veps$, $r$ and $d$),
\begin{equation*}\label{eq:epsnet}
| N_\eta| \leq c_1 ^\ell .
\end{equation*}
We may now use the union bound to obtain an estimate of \eqref{eq:tdb0}:
\begin{eqnarray*}
\dP \PAR{  \mathcal E_\veps } & \leq & \sum_{a \in N_\eta}  \dP\PAR{  \mathcal E _{\delta / 3} (a)} \\
& \leq & |N_\eta | C \exp ( -(\ln n)^2 ) , 
\end{eqnarray*}
where at the second line, we have used Theorem \ref{th:FKB} (the constant $C$ depends on $\veps,d,r$). For our choice of $\ell$, we have 
$|N_{\eta}|  \leq c_1^\ell \leq n$ for all $n$ large enough. 
The bound \eqref{eq:tdb0} follows.  \qed
 
 \section{Appendix: Effective linearization for unitaries}
\label{sec:appendix}

To prove the strong convergence, we rely on the following variant of a Theorem by Pisier: 
\begin{theorem}[\cite{MR1401692}, Proposition 6]\label{thm-pisier}
Let $A,B$ be two unital $C^*$-algebras generated respectively by unitaries $\{U_i,i\in \INT{d}\}$ and
$\{V_i,i\in \INT{d}\}$. For convenience, define $U_{-i}:=U_i^*$, $V_{-i}:=V_i^*$, $U_0=1_A$, $V_0=1_B$.
Assume that, for any $k\in\mathbb{N}_*$ and for any $a_{-d},\ldots ,a_d\in M_k(\dC)$,
$$||\sum_{i=-d}^da_i\otimes U_i||=||\sum_{i=-d}^da_i\otimes V_i||,$$
then the map $U_i\to V_i$ extends to a unital $*$-isomorphism between $A$ and $B$.
\end{theorem}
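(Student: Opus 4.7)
The strategy is to view $A$ and $B$ as quotients of the universal unital $C^*$-algebra $C^*(\Fd)=C^*(u_1,\ldots,u_d)$ generated by $d$ free unitaries (the full group $C^*$-algebra of $\Fd$). By universality, the assignments $u_i\mapsto U_i$ and $u_i\mapsto V_i$ extend to surjective unital $*$-homomorphisms $\pi_A:C^*(\Fd)\to A$ and $\pi_B:C^*(\Fd)\to B$. The conclusion of the theorem is equivalent to the equality of kernels $\ker\pi_A=\ker\pi_B$, which by density of non-commutative $*$-polynomials in $C^*(\Fd)$ reduces to proving
$$\NRM{p(U_1,\ldots,U_d)}_A=\NRM{p(V_1,\ldots,V_d)}_B$$
for every $*$-polynomial $p$ in $u_1,\ldots,u_d$ and their adjoints; once this is in hand, the induced map $A=C^*(\Fd)/\ker\pi_A\to C^*(\Fd)/\ker\pi_B=B$ is the sought unital $*$-isomorphism sending $U_i$ to $V_i$.

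The whole argument is then reduced to a single tool, the \emph{effective linearization lemma} which constitutes the content of Section \ref{sec:appendix}: for every such $p$ there exist an integer $N=N(p)\in\dN$ and matrices $a_{-d},\ldots,a_d\in M_N(\dC)$, depending only on $p$, such that for any unital $C^*$-algebra $\mathcal{C}$ and any unitaries $U_1,\ldots,U_d\in\mathcal{C}$ (with $U_0=1_{\mathcal{C}}$ and $U_{-i}=U_i^*$),
$$\NRM{p(U_1,\ldots,U_d)}_{\mathcal{C}}=\NRM{\sum_{i=-d}^{d}a_i\otimes U_i}_{M_N(\mathcal{C})}.$$
Granting this lemma, Theorem \ref{thm-pisier} is a one-liner: applying it to $(\mathcal{C},U_i)=(A,U_i)$ and to $(\mathcal{C},U_i)=(B,V_i)$ with the \emph{same} tuple $(a_i)$ and combining with the assumption of Theorem \ref{thm-pisier} yields $\NRM{p(U)}_A=\NRM{p(V)}_B$, which is exactly what is needed.

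The genuine work is in establishing the linearization lemma, where the delicacy lies not in obtaining bounds but in producing an \emph{exact} identity uniformly over all $C^*$-algebras. The natural starting point is the elementary observation that a monomial $w=u_{i_1}^{\epsilon_1}\cdots u_{i_m}^{\epsilon_m}$ appears in the $(1,m+1)$ entry of the $m$-th power of the nilpotent bipartite matrix $L_w=\sum_{j=1}^{m}E_{j,j+1}\otimes U_{i_j}^{\epsilon_j}$, which is linear in the $U_i,U_i^*$. Assembling such gadgets in block-diagonal form realises any polynomial $p$ as a corner of a power of a linear expression, but this only yields inequalities between $\NRM{p(U)}$ and the norm of a linear combination. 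Upgrading these bounds to an exact equality is the main obstacle; the standard remedy is to embed $p(U)$ into a self-adjoint $2\times 2$ block matrix of the form $\bigl(\begin{smallmatrix}\lambda\cdot 1 & p(U)\\ p(U)^{*} & \lambda\cdot 1\end{smallmatrix}\bigr)$, use a Schur-complement calculation to identify $\NRM{p(U)}$ as the critical value of $\lambda$ at which this matrix fails to be invertible, and then absorb the scalar parameter $\lambda$ into an enlarged linear combination by a further increase in $N$. Making this programme entirely constructive --- so that the matrices $(a_i)$ are produced explicitly from $p$ and the identity is exact in \emph{every} $C^*$-algebra simultaneously --- is precisely what the appendix achieves.
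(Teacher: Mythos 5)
Your opening reduction (realizing $A$ and $B$ as quotients of $C^*(\Fd)$ and reducing the theorem to the equality $\NRM{p(U)}_A=\NRM{p(V)}_B$ for all $*$-polynomials $p$) is correct, and it matches the setup of the paper's appendix. The gap is in the ``effective linearization lemma'' on which everything else rests: the claim that for each $*$-polynomial $p$ there is a \emph{single} tuple $(a_{-d},\ldots,a_d)$, depending only on $p$, with $\NRM{p(U)}=\NRM{\sum_i a_i\otimes U_i}$ exactly for all unitaries in all $C^*$-algebras, is false. Take $d=1$ and $p(u)=u^2+(u^*)^2$. Evaluating at the scalar unitary $U=e^{i\theta}\in\dC$, the required identity reads $\NRM{e^{-i\theta}a_{-1}+a_0+e^{i\theta}a_1}=2|\cos 2\theta|$. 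The right-hand side vanishes at the four distinct points $\theta=\pi/4+k\pi/2$, so the matrix polynomial $a_{-1}+a_0w+a_1w^2$ (degree $2$ in $w=e^{i\theta}$) would have to vanish at four distinct values of $w$, forcing $a_{-1}=a_0=a_1=0$ and hence a pencil of norm $0$, a contradiction. This is precisely why Pisier's hypothesis quantifies over \emph{all} matrix tuples, and why your plan to ``absorb the scalar parameter $\lambda$ into an enlarged linear combination'' cannot succeed: Schur-complement linearization determines $\NRM{p(U)}$ from the norms (or spectra) of a whole \emph{family} of pencils indexed by the shift, never from a single one.

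The correct (and weaker, but sufficient) statement, which is what the paper proves in Section \ref{sec:appendix}, is that the \emph{collection} of all linear pencil norms determines all polynomial norms. Concretely: after reducing to self-adjoint pencils with rectangular coefficients (Lemma \ref{lem-selfadjoint-is-enough}) and noting that $\NRM{Q_U}$ is recovered from the function $c\mapsto\NRM{Q_U+c1}$ for $|c|$ large (Lemma \ref{lem-positive-is-enough}), one shows (Lemma \ref{lem-specific-to-unitaries}) that any self-adjoint $Q$ supported on words of length at most $2l$ satisfies $Q+c1=P_c^*P_c$ for all large $c$, where $P_c$ is supported on words of length at most $l$ with rectangular matrix coefficients; the identity $\lambda(g)^*\lambda(h)=\lambda(g^{-1}h)$, i.e.\ the unitarity of the generators, is what makes this factorization available (it fails for self-adjoint generators). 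Then $\NRM{Q_U+c1}=\NRM{(P_c)_U}^2$, and an induction doubling the degree at each step, initialized by the linear case given in the hypothesis, yields $\NRM{p(U)}=\NRM{p(V)}$ for every $p$. If you want to salvage your structure, replace your lemma by: for every $p$ there is a family of pencils $(L_{p,c})_{c}$ such that the function $c\mapsto\NRM{L_{p,c}(U)}$ determines $\NRM{p(U)}$, and establish it by the factorization argument above.
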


In particular, it means that given a non-commutative polynomial in $U_i$, its norm is determined by the collection of
all $\{||\sum_{i=-d}^da_i\otimes U_i||,a_i\in M_k(\dC),i\in\{-d,\ldots, d\}, k\in\mathbb{N}_*\}$. However, the theorem does not make this dependence explicit. This section aims to make this dependence explicit and robust enough to allow concrete estimates for convergence speeds
 for any non-commutative polynomial.

Before, let us outline how we use Theorem \ref{thm-pisier}. We replace $A$ by a sequence of unital $C^*$-algebras $A^{(n)}$ and
$U_i$ by $U_i^{(n)}$. 
Let $\mathbb{C}[\Fd]$ be the $*$-algebra of the free group and $P\in \mathbb{C}[\Fd]$, where the canonical generators of $\Fd$ are $u_1,\ldots , u_d$. The map $u_i\mapsto U_i$ generates a $*$-homomorphism $\mathbb{C}[\Fd]\to A$
which we denote by $P\mapsto P_U$. Likewise, we have a $*$-homomorphism $\mathbb{C}[\Fd]\to A$
which we denote by $P\mapsto P_V$.

If, for any $k\in\mathbb{N}_*$ and for any $a_{-d},\ldots ,a_d\in M_k(\dC)$,
$$\lim_n||\sum_{i=-d}^da_i\otimes U_i^{(n)}||=||\sum_{i=-d}^da_i\otimes V_i||,$$
then
$$\lim_n||P_{U^{(n)}}||=||P_V||.$$
Indeed, if  consider the $C^*$-algebra $A=\prod_{n\ge 1}A^{(n)}/\mathcal{I}$
where $\mathcal{I}$ is the two-sided ideal of sequences of elements in $A^{(n)}$ whose norm tends to zero
as $n\to \infty$, and call $U_i$ the image of $(U_i^{(n)})_{n\ge 1}$ under taking the quotient. 
Then, since finite-dimensional matrix algebras are exact, the $C^*$ subalgebra of $A$ generated by $U_i$ satisfies the hypotheses of Pisier's proposition. Thus, the isomorphism of the conclusion and the definition of $A$ ensures that
$\lim_n||P_{U^{(n)}}||=||P_V||$ for any $P\in \mathbb{C}[\Fd]$.

As mentioned above,
Pisier's theorem is non-constructive. Let us outline the idea of its proof. In the language of operator spaces, the map
$U_i\to V_i$ is a completely contractive map from the operator space $\text{span} (U_i)$ to $\text{span} (V_i)$.
Therefore, the Arveson-Wittstock extension theorem applies and yields an unital completely contractive map from 
$A$ to $B$. The classification of completely contractive maps from $A$ to $B$ (Stinespring theorem) and the fact
that the generators are unitaries implies that this complete contraction must be an isometric homomorphism, 
which concludes the proof.

Let us digress and compare with Haagerup and Thorbj{\o}rnsen's result in \cite{MR2183281}, where the first statement of a linearization argument can be found for selfadjoint elements. Two remarks are in order. As for Pisier's result, the initial version is not constructive. 
A constructive version has been found later, see, e.g., \cite{MR3585560}, Chapter 10 for a comprehensive explanation.
Interestingly, Pisier's assumptions are simpler because only an equality on the norm of linear pencils is required. In contrast, for Haagerup and Thorbj{\o}rnsen, an equality of spectrum is needed (which looks stronger at first sight). 
As far as we can tell, a constructive proof of Pisier has not been given yet. We fill in this gap by providing a linearization. It is to be noted that this linearization does not require knowledge about the inverse of matrices.

The first lemma proves that it is enough to restrict to selfadjoint pencils and that the result also extends to 
all rectangular matrix coefficients.
\begin{lemma}\label{lem-selfadjoint-is-enough}
The following are equivalent:
\begin{enumerate}
\item For any $a_{-d},\ldots ,a_d\in M_k(\dC)$,
$$||\sum_{i=-d}^da_i\otimes U_i||=||\sum_{i=-d}^da_i\otimes V_i||.$$
\item For any $a_{-d},\ldots ,a_d\in M_{N,k}(\dC)$,
$$||\sum_{i=-d}^da_i\otimes U_i||=||\sum_{i=-d}^da_i\otimes V_i||.$$
\item For any $a_{-d},\ldots ,a_d\in M_k(\dC)$, $a_{-d},\ldots ,a_d\in M_k(\dC)$, $a_i=a_{-i}^*$.
$$||\sum_{i=-d}^da_i\otimes U_i||=||\sum_{i=-d}^da_i\otimes V_i||.$$
\end{enumerate}
\end{lemma}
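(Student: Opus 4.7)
The plan is to establish $(1) \Leftrightarrow (2)$ by a rectangular-to-square embedding and $(1) \Leftrightarrow (3)$ by a self-adjoint doubling (``dilation'') trick. In both cases one exploits the fact that the operator norm of a block matrix is unchanged when one pads with zero blocks or performs the standard $\begin{pmatrix}0 & M\\ M^* & 0\end{pmatrix}$ symmetrization.

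For $(1)\Rightarrow(2)$, given $a_i\in M_{N,k}(\dC)$ form the square matrix $\tilde a_i = \begin{pmatrix} 0 & 0 \\ a_i & 0 \end{pmatrix}\in M_{N+k}(\dC)$. Then $\sum_i \tilde a_i\otimes U_i$ is block-triangular with only one nonzero block equal to $\sum_i a_i\otimes U_i$, so their operator norms coincide; the same holds with $U_i$ replaced by $V_i$. The implication $(2)\Rightarrow (1)$ is immediate since $M_k(\dC)=M_{k,k}(\dC)$, and $(1)\Rightarrow (3)$ is similarly trivial.

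The real content is in $(3)\Rightarrow(1)$. Given arbitrary $a_{-d},\ldots,a_d\in M_k(\dC)$, set
\[
b_i := \begin{pmatrix} 0 & a_i \\ a_{-i}^* & 0 \end{pmatrix}\in M_{2k}(\dC),\qquad i\in\{-d,\ldots,d\}.
\]
A direct check gives $b_i^* = b_{-i}$, so the family $(b_i)$ satisfies the self-adjointness hypothesis of (3). Writing $M_U = \sum_i a_i \otimes U_i$ and using $U_{-i}=U_i^*$, one computes
\[
\sum_{i=-d}^d b_i\otimes U_i = \begin{pmatrix} 0 & M_U \\ M_U^* & 0 \end{pmatrix},
\]
because reindexing $i\mapsto -i$ yields $\sum_i a_{-i}^*\otimes U_i = \sum_i a_i^*\otimes U_{-i} = M_U^*$. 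The elementary identity $\bigl\|\bigl(\begin{smallmatrix}0 & M\\ M^* & 0\end{smallmatrix}\bigr)\bigr\| = \|M\|$ then gives $\|\sum_i b_i\otimes U_i\| = \|M_U\|$, and identically $\|\sum_i b_i\otimes V_i\| = \|M_V\|$. Applying hypothesis (3) to the self-adjoint pencil $\sum_i b_i\otimes \cdot$ yields $\|M_U\|=\|M_V\|$, which is (1).

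I do not expect a serious obstacle: both implications reduce to algebraic identities for $2\times 2$ block operator matrices, and the doubling trick is precisely the one already invoked in Section \ref{sec:main} right before \eqref{eq:symA} to justify restricting to self-adjoint operators. The only minor point to be careful about is the reindexing $i\mapsto -i$ when turning $M_U^*$ into a pencil in the $U_i$'s, which relies on the symmetric convention $U_{-i}=U_i^*$ adopted in the statement.
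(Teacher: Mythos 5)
Your proposal is correct and follows exactly the same route as the paper: padding with zeros for $(1)\Leftrightarrow(2)$ and the $\bigl(\begin{smallmatrix}0 & M\\ M^* & 0\end{smallmatrix}\bigr)$ doubling for $(1)\Leftrightarrow(3)$. The paper only sketches these two facts in two sentences, whereas you carry out the explicit construction of the self-adjoint pencil $b_i$ and verify the reindexing $i\mapsto -i$; the details check out.
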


\begin{proof}
The equivalence between the first point and the second point follows from the fact that padding a rectangular matrix
 with zeros into a square matrix does not change its operator norm.
The equivalence between the first point and the third point follows from the fact that for any operator $A: L(H_1,H_2)$,
$$||A||=
||\begin{pmatrix}
0 & A \\
A^* & 0
\end{pmatrix}||.$$
The conclusion follows.
\end{proof}

The second lemma proves that it is enough to restrict to positive pencils. 
\begin{lemma}\label{lem-positive-is-enough}
The following are equivalent:
\begin{enumerate}
\item For any $a_{-d},\ldots ,a_d\in M_k(\dC)$,
$$||\sum_{i=-d}^da_i\otimes U_i||=||\sum_{i=-d}^da_i\otimes V_i||.$$
\item For any $a_{-d},\ldots ,a_d\in M_k(\dC)$ such that 
$\sum_{i=-d}^da_i\otimes u_i\ge 0$ is positive as an element of $\mathbb{C}[\Fd]$ (i.e. a finite sum of selfadjoint squares), then
$$||\sum_{i=-d}^da_i\otimes U_i||=||\sum_{i=-d}^da_i\otimes V_i||.$$
\end{enumerate}
\end{lemma}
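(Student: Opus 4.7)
The implication $(1) \Rightarrow (2)$ is immediate, since positive pencils are a particular class of pencils. For $(2) \Rightarrow (1)$, by Lemma \ref{lem-selfadjoint-is-enough} we may assume that $L = \sum_{i=-d}^d a_i \otimes u_i$ is self-adjoint, i.e., $a_0 = a_0^*$ and $a_{-i} = a_i^*$ for $i \in \INT{d}$. The plan is to exhibit, for every sufficiently large real $t$, explicit sum-of-squares decompositions of the shifted pencils $tI + L$ and $tI - L$; assumption $(2)$ then forces $\|tI \pm L_U\| = \|tI \pm L_V\|$, from which one reads off $\|L_U\| = \|L_V\|$.

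The key identity, a direct computation using $u_{-i} u_i = 1$ in $\mathbb{C}[\Fd]$, reads
$$ (\lambda_i I_k \pm \lambda_i^{-1} a_i \otimes u_i)^*(\lambda_i I_k \pm \lambda_i^{-1} a_i \otimes u_i) = (\lambda_i^2 I_k + \lambda_i^{-2} a_i^* a_i) \otimes 1 \pm (a_i \otimes u_i + a_i^* \otimes u_{-i}) $$
for any real $\lambda_i > 0$. Summing over $i \in \INT{d}$ and setting $M_\lambda := \sum_{i=1}^d (\lambda_i^2 I_k + \lambda_i^{-2} a_i^* a_i) \in M_k(\dC)$ gives
$$ tI + L = (tI_k + a_0 - M_\lambda) \otimes 1 + \sum_{i=1}^d (\lambda_i I_k + \lambda_i^{-1} a_i \otimes u_i)^*(\lambda_i I_k + \lambda_i^{-1} a_i \otimes u_i), $$
and a symmetric formula for $tI - L$ with $+a_0$ replaced by $-a_0$ and the sign in each factor reversed. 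For $t$ large enough that $tI_k \pm a_0 - M_\lambda \geq 0$ as matrices (e.g., $t \geq \|M_\lambda\| + \|a_0\|$), one writes $tI_k \pm a_0 - M_\lambda = Q_\pm^* Q_\pm$ for some $Q_\pm \in M_k(\dC)$, and thereby expresses each of $tI + L$ and $tI - L$ as a sum of $d+1$ hermitian squares in $M_k \otimes \mathbb{C}[\Fd]$, i.e., as a positive pencil in the sense of $(2)$.

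Fix such a $t$. By assumption $(2)$, $\|tI + L_U\| = \|tI + L_V\|$ and $\|tI - L_U\| = \|tI - L_V\|$. The image under the unital $*$-homomorphism $u_i \mapsto U_i$ of a sum of hermitian squares is a positive operator, so $tI \pm L_U$ are positive self-adjoint elements of $M_k \otimes A$, whence $\|tI + L_U\| = t + \lambda_{\max}(L_U)$ and $\|tI - L_U\| = t - \lambda_{\min}(L_U)$, with the analogous formulas in $B$. Subtracting $t$ yields $\lambda_{\max}(L_U) = \lambda_{\max}(L_V)$ and $\lambda_{\min}(L_U) = \lambda_{\min}(L_V)$, and since $L$ is self-adjoint we conclude $\|L_U\| = \max(\lambda_{\max}(L_U), -\lambda_{\min}(L_U)) = \|L_V\|$. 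The only substantive ingredient is the algebraic sum-of-squares identity above; no approximation arguments or operator-theoretic estimates are needed, which is precisely what makes this a constructive reduction.
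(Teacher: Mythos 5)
Your proof is correct, and it follows the same underlying strategy as the paper's: reduce to self-adjoint pencils via Lemma \ref{lem-selfadjoint-is-enough}, shift by large scalar multiples of the identity in both directions, and recover $\|L_U\|$ from $\|t1\pm L_U\|$ by the spectral identity $\|T\|=\max\sigma(T)$ for positive $T$. The one place where you genuinely add something is the explicit sum-of-squares certificate: the paper's proof merely invokes the ``fact from spectral theory'' that knowing $\|x1+P\|$ for all $|x|\ge c$ determines $\|P\|$, and leaves implicit the verification that $t1\pm L$ is actually a finite sum of selfadjoint squares in $M_k(\dC)\otimes\mathbb{C}[\Fd]$ --- which is exactly what is needed to invoke hypothesis (2). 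Your identity $(\lambda_i I_k\pm\lambda_i^{-1}a_i\otimes u_i)^*(\lambda_i I_k\pm\lambda_i^{-1}a_i\otimes u_i)=(\lambda_i^2I_k+\lambda_i^{-2}a_i^*a_i)\otimes 1\pm(a_i\otimes u_i+a_i^*\otimes u_{-i})$ supplies that certificate directly (it is in the same spirit as, but independent of, the later Lemma \ref{lem-specific-to-unitaries}), so your write-up is a self-contained and slightly more constructive version of the paper's argument.
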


\begin{proof}
The above follows from the following  fact from spectral theory:
 for a selfadjoint element $P$ in a unital $C^*$-algebra $A$,
for any $c>0$, if one knows $||x1+P||$ for any $x\in \mathbb{R}, |x|\ge c$, then one knows 
$||P||$. This concludes the proof, along with the fact that $||P||=||-P||$.
\end{proof}

We also need the following crucial technical lemma
\begin{lemma}\label{lem-specific-to-unitaries}
Let $G\subset F_d$ be a finite, symmetric subset of the free group (i.e., $G^{-1}=G$). Consider the set
$G^2=\{gh, g\in G, h\in G\}$. 
Let $Q=\sum_{g\in G^2}a_g\otimes \lambda_g \in M_k(\dC )\otimes \mathbb{C}[\Fd]$ be a self-adjoint element (equivalently, 
$a_g^*=a_{g^{-1}}$).

Then, there exists $N\ge k$ such that for all $c >0$ large enough, there exists $P = \sum_{g \in G} b_g\otimes \lambda_g \in M_{N,k}(\dC )\otimes \mathbb{C}[\Fd]$ which satisfies
$P^*P=Q+c1$.
\end{lemma}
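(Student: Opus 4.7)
The plan is to reduce the problem to a positive semidefinite factorization on $m \times m$ block matrices, where $m = |G|$. Enumerate $G = \{g_1,\ldots,g_m\}$ and write the candidate $P = \sum_{i=1}^m b_{g_i} \otimes \lambda_{g_i}$ with unknowns $b_{g_i} \in M_{N,k}(\dC)$. Stacking them horizontally into $B = (b_{g_1}\mid\cdots\mid b_{g_m}) \in M_{N,mk}(\dC)$, the identity
$$P^*P = \sum_{i,j} b_{g_i}^* b_{g_j} \otimes \lambda_{g_i^{-1} g_j}$$
shows that $P^*P$ depends on $B$ only through the Gram matrix $M := B^*B \in M_{mk}(\dC)$, viewed as an $m \times m$ block matrix with $k \times k$ blocks $M_{ij} = b_{g_i}^* b_{g_j}$. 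Introducing the linear map
$$\Psi : M_{mk}(\dC) \to M_k(\dC) \otimes \mathbb{C}[\Fd], \qquad \Psi(M) := \sum_{i,j} M_{ij} \otimes \lambda_{g_i^{-1} g_j},$$
the lemma becomes: find $M \geq 0$ with $\Psi(M) = Q + c\cdot 1$, and then recover $P$ from any rank factorization $M = B^*B$.

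Next I would record the two properties of $\Psi$ that I need. Taking $M = E_{ij} \otimes a$ for an elementary block matrix $E_{ij}$ and $a \in M_k(\dC)$ yields $\Psi(E_{ij} \otimes a) = a \otimes \lambda_{g_i^{-1} g_j}$, so $\mathrm{Im}(\Psi) = M_k(\dC) \otimes \SPAN(\lambda_w : w \in G^{-1}G)$; the symmetry $G^{-1} = G$ identifies $G^{-1}G$ with $G^2$. Hence $Q$, which is supported on $G^2$, admits a preimage $M_0 \in M_{mk}(\dC)$, and a direct computation gives $\Psi(M^*) = \Psi(M)^*$, so the self-adjointness $Q^* = Q$ lets me replace $M_0$ by its self-adjoint part and assume $M_0 = M_0^*$. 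The second, crucial, identity is $\Psi(I_{mk}) = \sum_i I_k \otimes \lambda_e = m \cdot 1$, which is what allows us to shift into the PSD cone.

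The construction is then immediate: for any $c \geq m\|M_0\|$, the self-adjoint matrix $M := M_0 + (c/m)\, I_{mk}$ is PSD (strictly positive definite, hence of full rank $mk \geq k$, when $c > m\|M_0\|$), and by linearity $\Psi(M) = Q + c \cdot 1$. Factoring $M = B^*B$ with $B \in M_{N,mk}(\dC)$ and $N = \mathrm{rank}(M) \leq mk$ (padding with zero rows if necessary to ensure $N \geq k$, or simply taking $N = mk$), the column blocks of $B$ give the coefficients $b_{g_i}$ and the resulting $P = \sum_i b_{g_i} \otimes \lambda_{g_i}$ satisfies $P^*P = Q + c \cdot 1$. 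The only nontrivial ingredient is the surjectivity of $\Psi$ onto its stated image, which is elementary linear algebra and crucially uses the symmetry $G = G^{-1}$; everything else is the bookkeeping identification of $P^*P$ as a block Gram form plus the soft perturbative observation that $\Psi(I_{mk})$ is a positive multiple of $1$. If there is a conceptual obstacle, it is ensuring that the PSD and linear (support on $G^2$) constraints are simultaneously satisfiable—resolved exactly by this last identity, which puts the identity of $M_{mk}(\dC)$ in the preimage of a strictly positive element and hence lets us translate any self-adjoint preimage of $Q$ into the interior of the PSD cone at cost $c$.
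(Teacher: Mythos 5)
Your proposal is correct and is essentially the paper's argument in different clothing: your self-adjoint preimage $M_0$ of $Q$ under $\Psi$ is exactly the paper's block matrix $\tilde Q$ with $\sum_{g^{-1}h=r}\tilde Q_{g,h}=a_r$, your shift $M_0+(c/m)I_{mk}$ is the paper's $\tilde Q+c1$, and your Gram factorization $M=B^*B$ is the paper's self-adjoint square root $\tilde P$. The only (harmless) cosmetic differences are that you normalize the identity shift by $m=|G|$ so that $P^*P=Q+c1$ exactly rather than $Q+c|G|1$, and that you phrase the key computation as surjectivity of the map $\Psi$ rather than exhibiting the explicit preimage $a_{g^{-1}h}/|\{(g',h'):(g')^{-1}h'=g^{-1}h\}|$.
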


\begin{proof}
Given $Q=\sum_{g\in G^2}a_g\otimes \lambda_g \in M_k(\dC )\otimes C[\Fd]$, pick
a selfadjoint element
$\tilde Q\in  \mathrm{End} (\underbrace{\mathbb{C}^k\oplus \cdots\oplus\mathbb{C}^k}_{\text{$|G|$ times}})$
 selfadjoint
that satisfies the following property: writing $\tilde Q=(\tilde Q_{g,h})_{g,h\in G}$, 
$\sum_{g,h\in G, g^{-1} h=r}\tilde Q_{g,h}=a_r$.
Such a $\tilde Q$ can always be found
(for example, take  $\tilde Q_{g,h}= a_{g^{-1} h} / |\{(g',h')\in G^2, (g')^{-1}h'=g^{-1} h\}|$). Let $c>0$ be large enough to ensure that $\tilde Q+ c1$ is positive and let
 $\tilde P$ be a selfadjoint square root of $\tilde Q+ c1$.
 Write $\tilde P= (\cdots |\tilde P_g |\cdots )_{g\in G}$ where $\tilde P_g\in M_{k|G|,k}(\dC ) $ is a ``block column vector'', i.e. 
 an operator $\mathbb{C}^k\to \underbrace{\mathbb{C}^k\oplus \cdots\oplus\mathbb{C}^k}_{\text{$|G|$ times}}$.
 Then, the operator $P=\sum_{g\in G} \tilde P_g\otimes \lambda_g$ in $M_k(\dC )\otimes \mathbb{C}[\Fd]$ satisfies
 $$P^*P= \sum_{g,h  \in G} \tilde P_g^* \tilde P_h \otimes \lambda(g^{-1} h) = \sum_{g,h  \in G} ( \tilde Q_{g,h} + c \IND_{g = h} \cdot 1_k) \otimes \lambda(g^{-1} h)  = Q + c  |G| 1,$$
 and this concludes the proof. 
 \end{proof}
Following notations of operator systems, we call
$$S_k(U)=\text{span} \{\sum_{i=-d}^da_i\otimes U_i, a_i\in M_k(\dC)\},$$
and $S_k(U)^{(l)}$ the vector space spanned by the product of all $l$-tuples of elements in $S_k(U)$.
By definition, $S_k(U)=S_k(U)^{(1)}$. In addition, one checks that $S_k(U)^{(l)}\subset S_k(U)^{(l+1)}$ and
$\cup_{l\ge 1} S_k(U)^{(l)}$ is $M_k(\dC)$ tensored by the $*$-algebra generated by $U_i$.

For what follows, we extend the homomorphism $\mathbb{C}[F_d]\to A$ by tensorising by matrices, and we keep the same notation 
$P\mapsto P_U$ (resp. $P\mapsto P_V$ for the homomorphism $M_k(\dC )\otimes\mathbb{C}[\Fd]\to M_k(\dC )\otimes A$).
The following lemma concludes the proof
\begin{lemma}
Consider the map $S_k(U)^{(l)}\subset M_k(\dC )\otimes A$ to $S_k(V)^{(l)}\subset M_k(\dC )\otimes B$ 
defined as follows: for $\tilde Q\in S_k(U)^{(l)}$, pick an
element $Q=\sum_{g\in G^{2l}}a_g\otimes \lambda_g \in M_k(\dC )\otimes \mathbb{C}[\Fd]$ such that
$Q_U=\tilde Q$. Then, the image of  $\tilde Q$ is $Q_V$. Assume that this map is well-defined, linear, and
 isometric for all $k$, then the same result holds if one replaces $l$ by $2l$.
\end{lemma}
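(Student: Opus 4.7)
The plan is to extract $\tilde Q+c\cdot 1$ as a product $P^{*}P$ at level $l$, invoke the induction hypothesis on $P$, and then recover information on $\tilde Q$ from the identity $\|\tilde Q+c\|=\|P\|^{2}$. By Lemma \ref{lem-selfadjoint-is-enough}, applied once to reduce to self-adjoint elements via the off-diagonal $2\times 2$ trick, I may restrict attention to a self-adjoint $\tilde Q\in S_{k}(U)^{(2l)}$. Fix a representative $Q=\sum_{g\in G_{2l}}a_{g}\otimes\lambda_{g}\in M_{k}(\mathbb{C})\otimes\mathbb{C}[F_{d}]$ with $Q_{U}=\tilde Q$ and $a_{g^{-1}}=a_{g}^{*}$, where $G_{l}$ denotes the set of reduced words of length at most $l$ (a symmetric finite subset of $F_{d}$ containing the identity) and $G_{l}^{2}\supset G_{2l}$. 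Lemma \ref{lem-specific-to-unitaries} applied to $G=G_{l}$ then produces an integer $N\geq k$ such that for every sufficiently large $c>0$ one can factor
$$Q+c\cdot 1=P^{*}P,\qquad P=\sum_{g\in G_{l}}b_{g}\otimes\lambda_{g}\in M_{N,k}(\mathbb{C})\otimes\mathbb{C}[F_{d}].$$
Padding $P$ with zero rows or columns to the square size $\max(N,k)$, which preserves operator norms by Lemma \ref{lem-selfadjoint-is-enough}, one has $P_{U}\in S_{\max(N,k)}(U)^{(l)}$ and $P_{V}\in S_{\max(N,k)}(V)^{(l)}$.

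Applying the two homomorphisms yields $\tilde Q+c\cdot 1=P_{U}^{*}P_{U}$ and $Q_{V}+c\cdot 1=P_{V}^{*}P_{V}$. The $C^{*}$-identity combined with the induction hypothesis at level $l$ gives
$$\|\tilde Q+c\cdot 1\|=\|P_{U}\|^{2}=\|P_{V}\|^{2}=\|Q_{V}+c\cdot 1\|$$
for every sufficiently large $c$. Running the same argument with $-Q$ in place of $Q$ gives the analogous identity $\|-\tilde Q+c\cdot 1\|=\|-Q_{V}+c\cdot 1\|$.

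To deduce well-definedness of the map at level $2l$, suppose $Q_{U}=Q_{U}'$; applying the two shifted norm identities to the self-adjoint element $Q-Q'$ forces $\|(Q-Q')_{V}+c\cdot 1\|=c$ and $\|-(Q-Q')_{V}+c\cdot 1\|=c$, so the spectrum of the self-adjoint operator $(Q-Q')_{V}$ is contained in $\{0\}$ and hence $Q_{V}=Q_{V}'$. Linearity is then tautological. For the isometric property, the identity $\|\tilde Q+c\cdot 1\|=\|Q_{V}+c\cdot 1\|$ valid for all large $c>0$ and all large $c<0$ determines the full spectrum of the self-adjoint operator $\tilde Q$ in terms of $Q_{V}$, which is exactly the observation underlying Lemma \ref{lem-positive-is-enough}; in particular $\|\tilde Q\|=\|Q_{V}\|$. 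Passing back from self-adjoint to arbitrary elements via the $2\times 2$ off-diagonal trick completes the induction step.

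The only delicate point is the compatibility between the rectangular $P$ produced by Lemma \ref{lem-specific-to-unitaries} and the induction hypothesis, which is stated for square matrix coefficients; Lemma \ref{lem-selfadjoint-is-enough} resolves this by ensuring that zero-padding $P$ preserves its operator norm on both the $U$- and $V$-sides, so that the induction hypothesis at level $l$ applies uniformly in the auxiliary dimension $N$.
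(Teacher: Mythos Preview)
Your proof is correct and follows essentially the same route as the paper: reduce to self-adjoint $Q$, factor $Q+c\cdot 1=P^{*}P$ with $P$ supported on words of length at most $l$ via Lemma \ref{lem-specific-to-unitaries}, invoke the level-$l$ hypothesis on $P$ (handling the rectangular shape through Lemma \ref{lem-selfadjoint-is-enough}), and recover $\|Q_U\|=\|Q_V\|$ from Lemma \ref{lem-positive-is-enough}. You are in fact more careful than the paper in spelling out the well-definedness argument and the square-padding step, both of which the paper leaves implicit.
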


\begin{proof}
Take $G=\{u_{-d},\ldots  \ldots, u_d\}$ where $u_1, \ldots, u_d$ are generators 
of the free group $F_d$ and $u_{-i}=u_i^{-1}, u_0=e$.
Consider a self-adjoint element $Q=\sum_{g\in G^{2l}}a_g\otimes \lambda_g \in M_k(\dC )\otimes \mathbb{C}[\Fd]$.
Thanks to Lemma \ref{lem-selfadjoint-is-enough} (point 3), it will be enough to prove that $||Q_U||=||Q_V||$.

Given $Q\in M_k(\dC )\otimes\mathbb{C}[\Fd]$ its images are $Q_U\in A$ and $Q_V\in B$.
Then by Lemma \ref{lem-specific-to-unitaries}, there exists an integer $N$ such that for all $c$ large enough,
$$Q+c1=P^*P,$$ where $P=\sum_{g\in G^{l}}b_k \otimes \lambda_g 
\in M_{N,k}(\mathbb{C})\otimes \mathbb{C}[\Fd]$.
Then, by Lemma \ref{lem-selfadjoint-is-enough} (point 2) we have $||P_U||=||P_V||$. The same is true for $-Q$. Therefore, by the $C^*$ norm axiom,
$||Q_U+c1_A||=||Q_V+c1_B||$ for any $|c|$ large enough. Thanks to Lemma \ref{lem-positive-is-enough}, this
implies that $||Q_U||=||Q_V||$ and we have extended the isometry.

To conclude, the representations $\mathbb{C}[\Fd]\to A$ and $\mathbb{C}[\Fd]\to B$, given respectively by $Q\mapsto Q_U$ and $Q\mapsto Q_V$
yield the $C^*$-norm on $\mathbb{C}[\Fd]$, therefore the map $U_i\mapsto V_i$ extends to an isomorphism $A\to B$. \end{proof}

Two remarks are in order. Firstly, the above argument is specific to unitaries. Indeed, if one replaces unitaries with selfadjoint elements,
the proof of the lemma \ref{lem-specific-to-unitaries} breaks down. For example, it does not a priori allow to construct
$X_1^2-X_2^2 +c1$ as a square of polynomials. 

Secondly,  by following the same reasoning, the lemma can be quantitatively improved as follows. If instead
of an equality, assume we have an inequality of the following type: there exists $\varepsilon_n >0$ tending to zero as $n\to\infty$ such that 
$||P_{U^{(n)}}||\le ||P_V||(1+\varepsilon_n)$. Then, 
$||Q_U+c1_A|| \le ||Q_V+c1_B||(1+\varepsilon_n)^2$.
Since an upper bound on an appropriate $c$ can be given as a function of the norm of each matrix coefficient
attached to a group element, there exists a constant $C$ that depends explicitly on 
$P\in M_k(\dC )\otimes\mathbb{C}[\Fd]$ (and on nothing else)
such that for $n$ large enough,
$$||Q_{U^{(n)}}|| \le ||Q_V||(1+C \varepsilon_n).$$
Naturally, an inequality in the opposite direction can be obtained similarly.
Iterating, we obtain the following
\begin{proposition}
If there exists a sequence tending to zero $\varepsilon_n$ such that
 any $k\in\mathbb{N}_*$ and for any $a_{-d},\ldots ,a_d\in M_k(\dC)$,
$$||\sum_{i=-d}^da_i\otimes U_i^{(n)}||-||\sum_{i=-d}^da_i\otimes V_i||=O(\varepsilon_n),$$
then the same holds at the level of polynomials:
$$||P_{U^{(n)}}||-||P_V||=O(\varepsilon_n).$$
\end{proposition}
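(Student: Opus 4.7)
The plan is to prove the proposition by induction on the \emph{depth} of a polynomial: say that $P \in M_k(\dC) \otimes \dC[\Fd]$ has depth at most $\ell$ if every group element appearing in $P$ lies in $G^{\ell}$, where $G = \{u_{-d}, \ldots, u_d\}$ is the symmetric generating alphabet (including the unit). The base case $\ell = 1$ is precisely the hypothesis of the proposition, and since any fixed $P$ has some finite depth, only finitely many doubling steps $\ell \to 2\ell$ are needed to reach it. Before starting the induction I would first reduce to the self-adjoint case by passing to the dilation $\begin{pmatrix} 0 & P \\ P^* & 0 \end{pmatrix}$, which has the same operator norm as $P$ and the same depth (since $G^\ell$ is symmetric), per Lemma \ref{lem-selfadjoint-is-enough}(3).

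The induction step will use Lemma \ref{lem-specific-to-unitaries} applied with $G$ replaced by $G^\ell$: given a self-adjoint $Q$ of depth at most $2\ell$, the lemma produces a constant $c = c(Q)$ large enough and a polynomial $P$ of depth at most $\ell$ (with possibly rectangular matrix coefficients, which is harmless by Lemma \ref{lem-selfadjoint-is-enough}(2)) such that $Q + c \cdot 1 = P^*P$. The $C^*$-identity then reads $\NRM{Q_{U^{(n)}} + c \cdot 1} = \NRM{P_{U^{(n)}}}^2$, and likewise on the $V$ side. Applying the inductive hypothesis at depth $\ell$ to $P$ gives $\NRM{P_{U^{(n)}}} = \NRM{P_V}(1 + O(\veps_n))$, and squaring yields $\NRM{Q_{U^{(n)}} + c \cdot 1} = \NRM{Q_V + c \cdot 1}(1 + O(\veps_n))$. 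Running the same decomposition on $-Q$ gives the matching two-sided control of shifted norms, and Lemma \ref{lem-positive-is-enough} then converts this into $\NRM{Q_{U^{(n)}}} = \NRM{Q_V}(1 + O(\veps_n))$, closing the induction at depth $2\ell$.

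All that remains is to track constants. Each of the finitely many induction steps contributes a multiplicative factor $(1 + O(\veps_n))^2$, where the implicit constant is controlled by the shift $c$ chosen in that step, and $c$ can in turn be bounded purely in terms of the matrix-valued coefficients of the polynomial being decomposed (as is explicit in the proof of Lemma \ref{lem-specific-to-unitaries}, where $c$ is chosen to make the block lift $\tilde Q$ positive). Since the depth of $P$ is a fixed finite quantity, the accumulated constant depends only on $P$, yielding the required $O(\veps_n)$ estimate. The main obstacle I anticipate is precisely this constant accounting: the polynomial produced at each step is obtained as a selfadjoint square root of a large positive block matrix built from $Q$ and $c$, so its coefficients are not transparent functions of $Q$, and one must verify that the constants controlling the loss at each step depend only on the original target polynomial and not on any data coming from $U^{(n)}$ or from $V$. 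This is forced by the purely algebraic nature of Lemma \ref{lem-specific-to-unitaries} but needs to be traced carefully through the chain of doublings.
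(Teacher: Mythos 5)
Your proposal is correct and follows essentially the same route as the paper: reduce to self-adjoint elements via the dilation of Lemma \ref{lem-selfadjoint-is-enough}, iterate the depth-doubling step built on the factorization $Q+c1=P^*P$ of Lemma \ref{lem-specific-to-unitaries} together with the $C^*$-identity, remove the shift with Lemma \ref{lem-positive-is-enough}, and track the constants through the finitely many doublings. The paper compresses all of this into the single word ``Iterating'' after its quantitative remark, so your write-up is simply a more explicit account of the same argument, including the key observation that the shifts $c$ depend only on the coefficients of the target polynomial and not on $U^{(n)}$ or $V$.
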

In addition, uniform bounds in the $a_i$'s on the speed of convergence of $||\sum_{i=-d}^da_i\otimes U_i^{(n)}||$ are
not needed, and the final estimate $||P_{U^{(n)}}||-||P_V||=O(\varepsilon_n)$ can be made explicit as a function of $P$ and the $a_i$'s.

\bibliographystyle{abbrv}
\bibliography{bib}

\end{document}